\documentclass[11pt,reqno]{amsart}
\usepackage{amsmath,amssymb,amsthm,mathrsfs,ifpdf}
\usepackage[hidelinks,hypertexnames=false]{hyperref}
\makeatletter
\renewcommand{\@biblabel}[1]{[#1]}
\makeatother
\ifpdf
  \usepackage{embedfile}
\fi
\numberwithin{equation}{section}
\newtheorem{theorem}{Theorem}[section]
\newtheorem{lemma}[theorem]{Lemma}
\newtheorem{proposition}[theorem]{Proposition}
\newtheorem{corollary}[theorem]{Corollary}
\theoremstyle{definition}

\theoremstyle{remark}
\newtheorem{remark}[theorem]{Remark}
\newcommand{\R}{\mathbb R}
\newcommand{\Z}{\mathbb Z}
\newcommand{\tr}{\operatorname{tr}}
\newcommand{\covol}{\operatorname{covol}}
\newcommand{\Sym}{\operatorname{Sym}}
\newcommand{\GL}{\operatorname{GL}}
\newcommand{\D}{\mathcal D_4}
\newcommand{\Ry}{\mathscr R}
\newcommand{\DD}{\mathscr D}
\newcommand{\Acl}{\mathscr A}
\newcommand{\T}{\mathscr T}
\newcommand{\dd}{\,\mathrm d}
\newcommand{\sodd}{\sigma_{\rm odd}}
\newcommand{\Hh}{\mathcal H}
\newcommand{\eps}{\varepsilon}
\DeclareMathOperator*{\argmin}{arg\,min}

\title[Theta minimization in dimension four]
{On minima of theta and Epstein zeta functions in dimension four}
\author{Senping Luo}
\address[S.~Luo]{School of Mathematics and Statistics, Jiangxi Normal
University, Nanchang 330022, China}
\email[S.~Luo]{luosp1989@163.com}
\author{Juncheng Wei}
\address[J.~Wei]{Department of Mathematics, Chinese University of
Hong Kong, Shatin, NT, Hong Kong}
\email[J.~Wei]{wei@math.cuhk.edu.hk}
\date{}
\subjclass[2020]{11H55, 11H50, 52C17}
\keywords{Theta function, Epstein zeta function, $D_4$,
perfect quadratic form, computer-assisted proof}
\begin{document}
\begin{abstract}
Let the theta and Epstein zeta functions be
$\Theta(\alpha,L)=\sum_{v\in L}e^{-\pi\alpha|v|^2}$ for $\alpha>0$
and
$E(L,s)=\sum_{v\in L\setminus\{0\}}{|v|^{-2s}}$
for $s>2$, respectively. We consider full-rank lattices $L\subset\R^4$.
Let the covolume of $L$ be one, and let
$\mathcal D_4=2^{-1/4}D_4$ be the root lattice $D_4$ rescaled to covolume one.
We prove that, up to orthogonal transformations,
\begin{equation}\nonumber
\argmin_{\covol(L)=1}\Theta(\alpha,L)=\mathcal D_4
\qquad\text{if}\quad \alpha>0,
\end{equation}
and this implies that
\begin{equation}\nonumber
\argmin_{\covol(L)=1}E(L,s)=\mathcal D_4
\qquad\text{if}\quad s>2.
\end{equation}
This proves that the root lattice minimizes the theta and Epstein zeta functions among all lattices in dimension four,
as conjectured by Sarnak-Str\"ombergsson (\cite{SS}, 2006), who established the local minimality of $\mathcal D_4$.
Thereby, this resolves the Rankin-Sobolev problem in dimension four dating back to \'Endibaev (\cite{End1978}, 1978) and Shushbaev (\cite{Shu1978}, 1978).
\end{abstract}
\maketitle
\raggedbottom

\section{Introduction}\label{sec:intro}

\subsection{Main results and background}
For a full-rank lattice $L\subset\mathbb R^d$, define the theta and
Epstein zeta functions by
\begin{equation}\label{eq:theta}
 \Theta(\alpha,L)=\sum_{v\in L}e^{-\pi\alpha|v|^2},\quad \alpha>0,
\end{equation}
and
\begin{equation}\label{eq:Epstein-definition}
 E(L,s)=\sum_{v\in L\setminus\{0\}}\frac1{|v|^{2s}},
 \quad s>\frac{d}{2},
\end{equation}
where $|\cdot|$ denotes the Euclidean norm. Unless stated otherwise,
lattices have covolume one and are identified up to orthogonal
transformations. Let
\begin{equation}\label{eq:D}
 D_4=\{m\in\Z^4:m_1+m_2+m_3+m_4\in2\Z\},
 \qquad \D=2^{-1/4}D_4.
\end{equation}
The lattice $\D$ has covolume one and minimum squared length $\sqrt2$.

Locating the minima of $E(L,s)$ or $\Theta(\alpha,L)$ has a long
history. The problem is often called the Rankin--Sobolev
problem~\cite{Rankin,Sobo1961}, particularly in the Russian
literature; see, for example,~\cite{Shu1978}. For two fundamental,
groundbreaking, and celebrated contributions in dimensions three,
eight, and twenty-four, we refer to
Sarnak--Str\"ombergsson~\cite{SS} and
Cohn--Kumar--Miller--Radchenko--Viazovska~\cite{CKMRV}.

Our first main result concerns the theta function in dimension four.
\begin{theorem}[Minima of 4d theta function]
\label{thm:main}
Let $L\subset\R^4$ be a lattice of covolume one. For every $\alpha>0$,
\begin{equation}\label{eq:main}
 \Theta(\alpha,L)\ge\Theta(\alpha,\D).
\end{equation}
Equality holds if and only if $L$ is orthogonally equivalent to the root lattice $\D$.
\end{theorem}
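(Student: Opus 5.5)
First reduce to $\alpha\ge1$. Poisson summation gives $\Theta(\alpha,L)=\alpha^{-2}\Theta(1/\alpha,L^*)$ for covolume-one $L$, and $\D^*$ is orthogonally equivalent to $\D$ because $D_4^*$ is similar to $D_4$. So it suffices to treat $\alpha\ge1$, with $L$ ranging over all covolume-one lattices.

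Next, parametrize lattices by their Gram matrices $Q$ in a fundamental domain for $\GL_4(\Z)$ acting on positive definite forms of determinant one. For concreteness I would use the Minkowski-reduced (or Korkine--Zolotarev-reduced) domain. Set $m=\min_{v\ne0}|v|^2$ and let $N(L)$ be the kissing number. Split the domain into three regions.

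\emph{(i) Thin region, $m$ small.} If $m\le c_0$, the lattice is close to degenerate. Write $L$ as a fibration over a primitive short vector and apply Poisson summation in that direction. This gives $\Theta(\alpha,L)\ge(\alpha m)^{-1/2}\cdot(\text{positive lower bound})$, which exceeds $\Theta(\alpha,\D)$ for all $\alpha\ge1$. More generally, if the successive minima are very unbalanced, a projection onto a sublattice reduces the problem to explicit lower bounds in dimension at most 3.

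\emph{(ii) Local region.} For lattices near $\D$ I would use the Sarnak--Str\"ombergsson local minimality. Quantitatively, $\D$ is perfect and eutactic, so the Voronoi characterization gives a strict local minimum of $m$. The Hessian of $\Theta(\alpha,\cdot)$ at $\D$ is controlled by the spherical 2-design property of every shell of $D_4$ (every shell of $D_4$ is in fact a 5-design, since the $W(F_4)$-invariant polynomials have no harmonics of degree $\le5$ apart from constants). This yields explicit positive definiteness of the Hessian, uniformly for $\alpha\ge1$, on a ball of explicit radius. The third derivatives must be bounded on that ball, with an explicit tail estimate.

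\emph{(iii) Compact middle region.} Here I would do a computer-assisted verification using interval arithmetic on a cover of the remaining compact subset of the 9-dimensional space of forms. Two sub-steps:
\begin{itemize}
\item For large $\alpha$, $\Theta(\alpha,L)\approx1+N e^{-\pi\alpha m}$. Since $\D$ uniquely maximizes $m$ (it is the densest 4-dimensional lattice, Korkine--Zolotarev), the comparison reduces to $m(L)<\sqrt2-\delta$ off the local ball. This handles $\alpha\ge\alpha_0$ uniformly.
\item For $1\le\alpha\le\alpha_0$, combine Lipschitz bounds in $(\alpha,Q)$ with truncated sums and explicit Gaussian tail bounds over a finite grid.
\end{itemize}

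The main obstacle is the middle region. The parameter space is 9-dimensional plus $\alpha$, so naive gridding is hopeless. To make it tractable I would first try to exploit monotonicity. One tool is Montgomery-type monotonicity under rescaling of sublattice directions. Another is to reduce to the boundary of Voronoi domains, i.e.\ to show that along a path decreasing toward the perfect form $D_4$, $\Theta$ decreases. Using Voronoi's theory, the only perfect forms in dimension 4 are $D_4$ and $A_4$. I would try to show that for fixed $\alpha$ a minimizer must be a critical point whose gradient condition makes the shells carry a 2-design. Classifying such lattices in dimension 4 (only finitely many candidate shell configurations) would then leave a few candidates, such as $A_4$, $\Z^4$, and $A_3\oplus\Z$, which can be compared directly.

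Equality follows from the strictness in each region.
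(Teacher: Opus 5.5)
Your outline has the right outer structure: duality to reduce to $\alpha\ge1$, a local statement at $\D$, a short-vector region, and a large-$\alpha$ argument via a length bound away from $\D$. The central step, region (iii) for $1\le\alpha\le\alpha_0$, has no working argument. You concede that gridding the nine-dimensional shape space times $[1,\alpha_0]$ is hopeless, and the fallback you propose fails. Stationarity of $\Theta(\alpha,\cdot)$ at a fixed $\alpha$ only says that the Gaussian-weighted second-moment matrix $\sum_{v\in L}vv^Te^{-\pi\alpha|v|^2}$ is scalar. This single condition mixes all shells with $\alpha$-dependent weights. It does not make any individual shell a $2$-design, and its solution set moves with $\alpha$. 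For example, for a covolume-one orthogonal sum $c_1\Z\oplus c_2A_3$ the matrix is $\mathrm{diag}(\beta_1,\beta_2,\beta_2,\beta_2)$. As $c_1/c_2\to0$ one has $\beta_1/\beta_2\to\infty$, and as $c_1/c_2\to\infty$ one has $\beta_1/\beta_2\to0$. So for every $\alpha$ some ratio gives a critical lattice that is not a design lattice and depends on $\alpha$. There is therefore no finite candidate list to compare. The Montgomery-type monotonicity and ``decrease along paths toward $D_4$'' are stated as hopes, not steps.

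The missing idea is the paper's. Do not fix $\alpha$; instead minimize the weighted defect $\Phi(\alpha,L)=\alpha(\Theta(\alpha,L)-\Theta(\alpha,\D))$ jointly over scale and covolume-one shape. Extreme scales are handled by direct comparison and theta sublevels are compact, so a global minimizer exists. The identity $\Phi(\alpha,L)=\Phi(1/\alpha,L^*)$ lets you take $\alpha\ge1$. At the minimizer both the shape and the scale derivatives vanish. This gives the scalar-covariance identity and, with $t=\pi\alpha$ and $M(t,L)=\sum_{v\in L}|v|^2e^{-t|v|^2}$, the identity $\Theta(\alpha,L)-\Theta(\alpha,\D)=t(M(t,L)-M(t,\D))$. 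These identities are not used to classify the minimizer but as linear constraints:
\begin{itemize}
\item A shortest pair gives $M(t,L)\ge8ae^{-ta}$, which settles $\alpha\ge5$.
\item For $1\le\alpha\le5$, project along a shortest vector and rewrite theta over complete fibres with two free multipliers $\gamma,\eta$. Then apply a three-dimensional Laguerre minorant with nonnegative Fourier transform on $\Gamma_a$, which has covolume $a^{-2}$. The result is a bound depending only on $(\alpha,a)$.
\end{itemize}
Determinant separation near $D_4$ and $A_4$ forces $a\le25/\sqrt{351}$ off the trace neighbourhoods. Together these reduce the verification to a two-parameter rectangle, and $\Phi(1,\D)=0$ then gives $\min\Phi=0$.

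There is also a secondary problem in your step (ii). A Taylor argument (Hessian at $\D$ plus third-derivative bounds) only certifies a ball of radius $O(1/\alpha)$. The Hessian at $\D$ has size of order $\alpha^2e^{-\pi\sqrt2\alpha}$, while third derivatives on the ball are of order at least $\alpha^3e^{-\pi\sqrt2\alpha}$. Your threshold $\alpha_0$ depends on the size of the excised ball through $\delta$, so steps (ii) and (iii) interlock and need not close. The paper avoids this with a per-shell two-moment (Hermite) lower bound and convexity of $s\mapsto\Hh_s$. This propagates one estimate at $s_0$ to all larger $s$, hence to all $\alpha\ge1$, on a fixed trace neighbourhood. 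Also, perfection and eutaxy make $\D$ a strict local \emph{maximum} of $m$, not a minimum.
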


Our second main theorem concerns the Epstein zeta function in dimension four and follows
from Theorem~\ref{thm:main} via the Mellin transform:
\begin{theorem}[Minima of 4d Epstein zeta function]
\label{thm:Epstein}
Let $L\subset\R^4$ be a lattice of covolume one, and let $s>2$. Then
\begin{equation}\label{eq:Epstein-main}
 E(L,s)\ge E(\D,s).
\end{equation}
Equality holds if and only if $L$ is orthogonally equivalent to the root lattice $\D$.
\end{theorem}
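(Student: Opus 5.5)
We derive Theorem~\ref{thm:Epstein} from Theorem~\ref{thm:main} through the Mellin transform. For $v\neq0$ and $s>0$, the substitution $t=\pi\alpha|v|^2$ in the Gamma integral gives
\begin{equation*}
 \frac{\Gamma(s)}{\pi^{s}}\,|v|^{-2s}=\int_0^\infty e^{-\pi\alpha|v|^2}\alpha^{s-1}\dd\alpha .
\end{equation*}
The integrand is nonnegative, so Tonelli lets us sum over $v\in L\setminus\{0\}$ and obtain
\begin{equation*}
 \pi^{-s}\Gamma(s)E(L,s)=\int_0^\infty\bigl(\Theta(\alpha,L)-1\bigr)\alpha^{s-1}\dd\alpha .
\end{equation*}
We first check that both sides are finite when $s>2$. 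As $\alpha\to\infty$, $\Theta(\alpha,L)-1$ decays like $e^{-c\alpha}$, where $c=\pi\lambda_1(L)^2$. As $\alpha\to0$, Poisson summation with $\covol(L)=1$ gives
\[
\Theta(\alpha,L)=\alpha^{-2}\Theta(1/\alpha,L^*),
\]
so $\Theta(\alpha,L)-1=O(\alpha^{-2})$. Hence the integral converges exactly when $s>2$, consistent with the convergence range of $E(L,s)$.

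Now take two covolume-one lattices $L$ and $\D$. Subtracting their integral formulas, the constant $1$ cancels:
\begin{equation*}
 \pi^{-s}\Gamma(s)\bigl(E(L,s)-E(\D,s)\bigr)=\int_0^\infty\bigl(\Theta(\alpha,L)-\Theta(\alpha,\D)\bigr)\alpha^{s-1}\dd\alpha .
\end{equation*}
By Theorem~\ref{thm:main}, the integrand is nonnegative for every $\alpha>0$. Since $\pi^{-s}\Gamma(s)>0$, inequality \eqref{eq:Epstein-main} follows.

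For the equality case, suppose $E(L,s)=E(\D,s)$. The integrand is continuous in $\alpha$, since the theta series converges locally uniformly on $(0,\infty)$. It is also nonnegative, so it must vanish identically. Thus $\Theta(\alpha,L)=\Theta(\alpha,\D)$ for every $\alpha>0$. The equality clause of Theorem~\ref{thm:main}, applied at any single $\alpha$, then forces $L$ to be orthogonally equivalent to $\D$. Conversely, orthogonally equivalent lattices clearly have the same Epstein zeta function.

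The only step needing care is justifying the exchange of sum and integral and the convergence of the integral for $s>2$. Tonelli handles the exchange because all terms are nonnegative, and the Poisson-summation bound above handles the small-$\alpha$ behaviour. The real difficulty lies entirely in Theorem~\ref{thm:main}, which is taken as given here.
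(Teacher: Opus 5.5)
Your proposal is correct and matches the paper's proof: the Gamma-integral representation of $|v|^{-2s}$, Tonelli to obtain the Mellin formula, convergence from exponential decay at infinity and Poisson summation near $\alpha=0$, then nonnegativity of the integrand from Theorem~\ref{thm:main}. The only difference is cosmetic, in the equality case. You use continuity to force the integrand to vanish identically. The paper instead notes that the integrand is strictly positive at every $\alpha>0$ whenever $L$ is not isometric to $\D$, so no continuity argument is needed.
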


Poisson summation and the Mellin transform give the meromorphic
continuation and functional equation of the Epstein zeta
function~\cite{Epstein}. After rescaling time, $\Theta(\alpha,L)$ is
the heat trace of the flat torus $\R^4/L^*$, where $L^*$ is the
Euclidean dual. The derivative of $(4\pi^2)^{-s}E(L,s)$ at zero
determines the regularized Laplacian determinant and spectral
height~\cite{SS}.

In dimension four, $D_4$ and $A_4$ are the only perfect forms up to
integral equivalence and scaling~\cite{KZ,BC,Barnes,Sch}.
Sarnak--Str\"ombergsson~\cite{SS} proved strict local minimality of
$\D$ for theta at every positive scale and for the analytically
continued Epstein zeta function at every positive real parameter
other than its pole. Spherical $4$-designs in every nonzero shell
give strict local zeta minimality in the convergence range
(Coulangeon~\cite{Coul}). Global Gaussian optimality of $D_4$ has
numerical support (Cohn--Kumar--Sch\"urmann~\cite{CKS}); earlier
dimension reductions concern specified layered, translated, and
orthorhombic families
(B\'etermin--Petrache~\cite{BeterminPetrache2017}).

In our proof, Theorem~\ref{thm:main} implies
Theorem~\ref{thm:Epstein} through the positive-superposition
principle for lattice energies; see the work of
B\'etermin~\cite{Betermin2021Defects}, inspired by
Cohn--Kumar~\cite{CK}. Historically, the Epstein zeta minimization
problem has attracted more attention than its theta counterpart.
In dimension two, Rankin~\cite{Rankin} initiated the study of
Epstein zeta minima for $s\ge1.035$ and proved that the hexagonal
lattice is optimal. Cassels~\cite{Cassels}, Ennola~\cite{Ennola},
and Diananda~\cite{Diananda} subsequently extended the parameter
range and developed further methods. Montgomery~\cite{Mont}
proved that the hexagonal lattice minimizes theta for every
$\alpha>0$, thereby implying Rankin's theorem. This is now a
celebrated result in lattice minimization; see
B\'etermin~\cite{Betermin}.

For interesting progress in dimension three, we refer to
B\'etermin--\v{S}amaj--Trav\v{e}nec~\cite{BeterminSAM2023},
B\'etermin--Petrache~\cite{BeterminPetrache2017}, and
Sarnak--Str\"ombergsson~\cite{SS}, together with the references
therein. Our previous work~\cite{LuoWei2026} proves that the
face-centred cubic lattice minimizes Epstein zeta, and that the
face-centred cubic lattice minimizes theta for $\alpha>1$, while
the body-centred cubic lattice minimizes theta for $\alpha<1$,
as conjectured by Sarnak--Str\"ombergsson~\cite{SS}.
In dimension four, relatively few results are available. Early
claims of local minimality appear in
\'Endibaev~\cite{End1978} and Shushbaev~\cite{Shu1978,Shu1989};
rigorous local minimality of $\D$ for theta and Epstein zeta was
established by Sarnak--Str\"ombergsson~\cite{SS}. Partial results
and further references to earlier work can be found in
Lim--Teo~\cite{Lim}.

In dimensions eight and twenty-four,
Sarnak--Str\"ombergsson~\cite{SS} proved local minimality of $E_8$
and the Leech lattice for theta and Epstein zeta.
Cohn--Kumar--Miller--Radchenko--Viazovska~\cite{CKMRV} proved the
deep universal optimality theorem for their interaction energies,
which in particular establishes global theta and zeta minimality.
Fourier linear programming connects sphere packing (Cohn-Elkies~\cite{CE}) with
universal optimality for completely monotone functions of squared
distance (Cohn-Kumar~\cite{CK}). This approach proves universal optimality of
$E_8$ and the Leech lattice among fixed-density point
configurations~\cite{CKMRV}, and hence minimality of Coulomb and
Riesz renormalized energies (Petrache-Serfaty~\cite{PetracheSerfaty2020}).

These developments arise mainly from analytic number theory and
mathematical analysis. Our interest comes from the $f$-potential
lattice energy
\begin{equation}\label{EFL}
 E_f(L)=\sum_{v\in L\setminus\{0\}}f(|v|^2).
\end{equation}
Gaussian and Riesz potentials give $\Theta(\alpha,L)-1$ and
$E(L,s)$, respectively. Such energies model ground states of
two-body lattice systems and phase transitions in crystals arising
in statistical physics and chemistry. B\'etermin and his
collaborators~\cite{Betermin,Betermin2018Local,BeterminCubic2019,
BeterminMorse2019,Betermin2021Defects,BeterminBonds2021,
BeterminDeLucaPetrache2021,Betermin2021LMP,BeterminPetrache2017,
BeterminPetrache2019,BeterminSAM2023,Betermin2024AMP,BeterminZhang2015}
initiated and extensively developed the minimization theory of
lattice energies in dimensions two and three, with particular
attention to physically relevant Riesz, Gaussian, Lennard--Jones,
and Morse potentials. Inspired by two-dimensional lattice energies,
they also introduced and studied new types of theta
functions~\cite{Betermin2020Soft,BeterminFaulhuber2023,BeterminFS2021,
BeterminSandier2018}. These generalized theta functions extend
naturally to higher dimensions and deserve further study.

In dimension two, we have systematically investigated
two-component Coulomb energies in diblock
copolymers~\cite{LuoRenWei2020}, Mueller--Ho energies in Bose--Einstein
condensates~\cite{LuoWeiThetaSums}, energies associated with Gaussian
potentials~\cite{LuoWeiThetaDifferences,LuoWeiNonmonotone,
LuoWeiNonGaussian,LuoWeiThetaDerivative,LuoWeiRatios}, and
Lennard--Jones interactions~\cite{LW}. Two related important works
apply modular functions to copolymer and Abrikosov vortex problems:
Chen--Oshita~\cite{Che2007} and
Sandier--Serfaty~\cite{Serfaty2012}, respectively. In both settings,
the hexagonal lattice is optimal.

This direction has attracted sustained attention; see the reviews
by Radin~\cite{Radin1987,Radin1991},
Ca\~nizo--Ramos-Lora~\cite{Canizo2024}, and
Lewin~\cite{Lewin2022,Lewin2025}. It is closely connected to the
crystallization conjecture, for which we refer to the comprehensive
review by Blanc--Lewin~\cite{Lewin2015}. For crystallization under
certain nonmonotone potentials, we refer to the celebrated results
of Theil~\cite{Theil2006} in dimension two and
Flatley--Theil~\cite{Theil2015} in dimension three.

\subsection{Lattices and reduction of positive definite forms}
\label{subsec:reduction-reference}
For a lattice $L$, write
\[
 a(L)=\min_{v\in L\setminus\{0\}}|v|^2
\]
for its minimum squared length. Thus $\sqrt{a(L)}$ is the distance
between nearest lattice points, and $\sqrt{a(L)}/2$ is the packing
radius. This quantity is independent of the choice of basis and
satisfies $a(cL)=c^2a(L)$ for $c>0$.

We use Gram matrices to represent lattice shapes and Minkowski
reduction to obtain compact representatives. Let $\Sym_4^+$ denote
the cone of real symmetric positive definite $4\times4$ matrices.
For a basis matrix $B=(b_1,\ldots,b_4)$ of $L=B\Z^4$, write
$Q=B^TB$, $Q[m]=m^TQm$ and
$a(Q)=\min_{m\in\Z^4\setminus\{0\}}Q[m]=a(L)$.
Then
\[
 \det Q=\covol(L)^2,\qquad
 \Theta(\alpha,Q)=\sum_{m\in\Z^4}e^{-\pi\alpha Q[m]}=\Theta(\alpha,L).
\]
A basis change replaces $Q$ by $U^TQU$, $U\in\GL_4(\Z)$.
The Euclidean dual is
\[
 L^*=\{y\in\R^4:y\cdot v\in\Z\text{ for all }v\in L\};
\]
the dual basis $B^{-T}$ has Gram matrix $Q^{-1}$.

Alongside the coordinate model for $D_4$ in \eqref{eq:D}, we use
\begin{equation}\label{eq:A-model}
 A_4=\left\{v\in\Z^5:\sum_{i=1}^5v_i=0\right\}
 \subset H,\qquad
 H=\left\{y\in\R^5:\sum_{i=1}^5y_i=0\right\}.
\end{equation}
The hyperplane $H$ has the induced Euclidean metric and is identified
isometrically with $\R^4$. Writing $e_i$ for the standard coordinate
vectors in the respective ambient spaces, choose the bases
\[
 B_D=(e_1-e_2,e_2-e_3,e_3-e_4,e_3+e_4),\qquad
 B_A=(e_1-e_5,e_2-e_5,e_3-e_5,e_4-e_5).
\]
Their Gram matrices are
\begin{equation}\label{eq:root-Gram}
 G_D=B_D^TB_D=
 \begin{pmatrix}
 2&-1&0&0\\-1&2&-1&-1\\0&-1&2&0\\0&-1&0&2
 \end{pmatrix},\qquad
 G_A=B_A^TB_A=I_4+\boldsymbol1\boldsymbol1^T,
\end{equation}
where $\boldsymbol1=(1,1,1,1)^T$. Both root lattices have minimum
squared length $2$, and their covolumes are $2$ and $\sqrt5$,
respectively. Thus the forms of minimum squared length one are
\begin{equation}\label{eq:minimum-one-forms}
 P_D=\tfrac12G_D,\qquad P_A=\tfrac12G_A,\qquad
 \det P_D=\tfrac14,\qquad \det P_A=\tfrac5{16}.
\end{equation}
We denote their integral-basis orbits by
\begin{equation}\label{eq:perfect-orbits}
 \DD=\{U^TP_DU:U\in\GL_4(\Z)\},\qquad
 \Acl=\{U^TP_AU:U\in\GL_4(\Z)\}.
\end{equation}
The covolume-one normalizations are $\D=2^{-1/4}D_4$ and
$5^{-1/8}A_4$; in particular, a Gram matrix of $\D$ is
$2^{-1/2}G_D=\sqrt2P_D$.

Pairing with the roots $e_i\pm e_j$ gives the dual lattice explicitly:
\begin{equation}\label{eq:D-dual-model}
 D_4^*=\Z^4\cup\left(\Z^4+\tfrac12\boldsymbol1\right)
       =B_D^{-T}\Z^4.
\end{equation}
It has Gram matrix $G_D^{-1}$, covolume $1/2$, and minimum squared
length $1$. Consequently $\D^*=2^{1/4}D_4^*$; the orthogonal
equivalence of $\D^*$ and $\D$ is given in Lemma~\ref{lem:isodual}.

The forms with minimum at least one constitute the Ryshkov domain
\begin{equation}\label{eq:Ry}
 \Ry=\{R\in\Sym_4^+:R[m]\ge1\text{ for all }m\in\Z^4\setminus\{0\}\}.
\end{equation}
For a covolume-one lattice with Gram matrix $Q$, the normalization
$R=Q/a(L)$ represents $a(L)^{-1/2}L$ and satisfies
\[
 a(R)=1,\qquad R\in\Ry,\qquad \det R=a(L)^{-4}.
\]
Thus lower bounds for the determinant on $\Ry$ give upper bounds
for the shortest squared length at fixed covolume. This is the
role of $a(L)$ in the geometric reduction of Section~\ref{sec:reduction}.

A form $P\in\Ry$ with $a(P)=1$ is \emph{perfect} if the matrices
$mm^T$, $m\in\Z^4$, with $P[m]=1$ span the space of real symmetric $4\times4$
matrices. The vertices of $\Ry$ are precisely the forms in
$\DD\cup\Acl$, with determinants $1/4$ and $5/16$,
respectively~\cite{KZ,Sch}. For $P\in\DD\cup\Acl$, set
\begin{equation}\label{eq:traceh}
 h_P(R)=\tr(P^{-1}R)-4.
\end{equation}
For fixed $P$ and $r>0$, the set $\{R\in\Ry:h_P(R)<r\}$ is the
corresponding trace neighborhood.
We take these neighborhoods over all integral-basis representatives;
they need not be disjoint.

A matrix $Q=(q_{ij})\in\Sym_4^+$ is \emph{Minkowski reduced} if
\begin{equation}\label{eq:Minkowski-definition}
 Q[m]\ge q_{ii}\quad\text{whenever }m\in\Z^4
 \text{ and }\gcd(m_i,\ldots,m_4)=1,\qquad i=1,\ldots,4.
\end{equation}
Equivalently, each $b_i$ is a shortest vector among those for which
$b_1,\ldots,b_{i-1},b_i$ can be extended to a lattice basis.
In dimension four, Barnes--Cohn~\cite{BC} give the following finite
description; see also Barnes~\cite[Section~4]{Barnes}:
\begin{equation}\label{eq:order}
 q_{11}\le q_{22}\le q_{33}\le q_{44},
\end{equation}
and, for $i=2,3,4$,
\begin{equation}\label{eq:36}
 Q[(m_1,\ldots,m_{i-1},1,0,\ldots,0)]\ge q_{ii},\qquad
 (m_1,\ldots,m_{i-1})\in\{-1,0,1\}^{i-1}\setminus\{0\}.
\end{equation}
Every positive definite form is integrally equivalent to a reduced
form, for which $q_{11}=a(Q)$ and $2|q_{ij}|\le q_{ii}$ for $i<j$.
The representatives $P_D$ and $P_A$ above are reduced. The fundamental
inequality~\cite[p.~46, equations (1.1)--(1.2)]{Barnes} gives
\begin{equation}\label{eq:Barnes}
 \det Q\ge\tfrac14q_{11}q_{22}q_{33}q_{44}.
\end{equation}
For covolume-one lattices, \eqref{eq:Barnes} gives
$0<a(L)\le\sqrt2=a(\D)$.
Reduction and \eqref{eq:Barnes} provide the compact representatives
used in Lemmas~\ref{lem:compact} and~\ref{lem:sublevels} and in the
attainment argument for Theorem~\ref{prop:barrier}. Theta is invariant
under integral changes of basis, as are the trace comparisons when
the same change is applied to $P$ and $R$.

\subsection{Outline of the proof}
\label{subsec:strategy}
The proof combines local and global comparisons. For a covolume-one
lattice $L$, set
\begin{equation}\label{eq:weighteddefect}
 \Phi(\alpha,L)=\alpha\bigl(\Theta(\alpha,L)-\Theta(\alpha,\D)\bigr).
\end{equation}
Poisson summation and the isoduality of $\D$ give
\[
 \Phi(\alpha,L)=\Phi(\alpha^{-1},L^*),
\]
so it suffices to consider $\alpha\ge1$.

In Section~\ref{sec:reduction}, tangent-cone estimates at the two
perfect forms and concavity of $\log\det$ give determinant
separation. In particular, if $\det Q=1$, $a=a(Q)$, and $R=Q/a$
satisfies
\[
 \inf_{P\in\DD}h_P(R)\ge\frac{11}{25},\qquad
 \inf_{P\in\Acl}h_P(R)\ge\frac1{10},
\]
then
\[
 a^{-4}=\det R\ge\frac{351^2}{25^4},\qquad
 a\le\frac{25}{\sqrt{351}}.
\]

In Section~\ref{sec:local}, shell moments give the local comparison
at $\D$, with equality only for its orthogonal images; root
averaging gives strict comparison near $A_4$.
A shortest antipodal pair gives
$\Theta(\alpha,L)-1\ge2e^{-\pi\alpha a(L)}$.
Hence an upper bound for $a(L)$ yields a lower bound for theta,
sufficient for comparison at large scales. A Fourier minorant
treats short vectors at the remaining scales. Outside these regions,
the remaining parameters satisfy
\[
 1\le\alpha<11,\qquad
 \frac{21}{20}<a(L)\le\frac{25}{\sqrt{351}}.
\]

In Section~\ref{sec:joint}, the extreme-scale comparison and compactness
of theta sublevels give a global minimum of $\Phi$ over all positive
scales and covolume-one shapes. Duality permits a minimizing scale
$\alpha\ge1$. Shape stationarity makes the Gaussian second-moment
matrix scalar, while scale stationarity relates its trace to the
theta difference. Retaining a shortest antipodal pair gives, with
$a=a(L)$,
\[
 \Theta(\alpha,L)-\Theta(\alpha,\D)
 \ge\pi\alpha\left(8a e^{-\pi\alpha a}
       -\sum_{v\in\D}|v|^2e^{-\pi\alpha|v|^2}\right).
\]
The right-hand side is positive for $\alpha\ge5$ in the remaining
length range. For $1\le\alpha\le5$, project along a shortest vector
$v$ onto $v^\perp$:
\[
 \Gamma_a=a^{-1/2}\pi_{v^\perp}(L),\qquad
 \covol(\Gamma_a)=a^{-2}.
\]
The stationarity identities express theta through weighted sums
over the complete fibres. For a radial Schwartz minorant $f$ of
these fibre weights with $\widehat f\ge0$, Poisson summation gives
\[
 \sum_{y\in\Gamma_a\setminus\{0\}}f(y)
 \ge a^2\widehat f(0)-f(0).
\]
A finite family of parameter-dependent minorants, together with
the moment bound, yields strict comparison on the remaining
rectangle. Applying this comparison at a global minimum gives
$\min\Phi=0$, since $\Phi(\alpha,\D)=0$. Every equality pair is
then a global minimum, so the same stationary comparison gives
uniqueness. Appendix~\ref{sec:verification} proves the uniform
auxiliary estimates by finite interval bounds and analytic tail
estimates; Appendix~\ref{app:coefficients} specifies the exact
rational coefficients.

In Section~\ref{sec:epstein}, Mellin integration of the theta
comparison gives, for $s>2$,
\[
 E(L,s)-E(\D,s)=\frac{\pi^s}{\Gamma(s)}
 \int_0^\infty\alpha^{s-1}
 \bigl(\Theta(\alpha,L)-\Theta(\alpha,\D)\bigr)\,\dd\alpha\ge0.
\]
Strictness at every scale yields uniqueness, and positive
superposition gives the same conclusion for Gaussian mixtures.

\section{Determinant separation}
\label{sec:reduction}\label{sec:geometry}
In this section, we prove a determinant bound outside the trace
neighborhoods of $D_4$ and $A_4$ defined in
Subsection~\ref{subsec:reduction-reference}. After normalization to
covolume one, this gives an upper bound for the shortest squared
length outside these neighborhoods.
\begin{theorem}[Determinant separation]
\label{prop:barrier}
Fix $r\in(0,2)$ and $s\in(0,2/3)$. If $R\in\Ry$ satisfies
\[
 \inf_{P\in\DD}h_P(R)\ge r,
 \qquad \inf_{P\in\Acl}h_P(R)\ge s,
\]
then
\begin{equation}\label{eq:barrier}
 \det R\ge\min\left\{\frac{(1-r/2)^2(1+r)^2}{4},
              \frac5{16}(1-3s/2)(1+5s/2)\right\}.
\end{equation}
\end{theorem}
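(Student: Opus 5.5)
The plan is to treat the two vertex types separately and then combine them. At each vertex $P\in\DD\cup\Acl$ I would reduce the inequality to an eigenvalue problem for $Y=P^{-1/2}RP^{-1/2}-I$. Then $\det R=\det P\cdot\prod_i(1+\lambda_i)$ and $h_P(R)=\sum_i\lambda_i$, where $\lambda_1,\dots,\lambda_4$ are the eigenvalues of $Y$. The membership $R\in\Ry$ enters only through the tangent-cone conditions $(R-P)[m]\ge0$ for the minimal vectors $m$ of $P$: there are $12$ pairs $\pm m$ for $D_4$ and $10$ pairs for $A_4$. Both $P_D$ and $P_A$ are perfect and strongly eutactic, since their minimal vectors form spherical $2$-designs after the $P^{1/2}$ change of variables. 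Hence $\sum_{P[m]=1}mm^T=cP^{-1}$ with $c=12/4\cdot 2=6$, respectively $c=5$ (counting each pair once and normalizing). This gives
\[
 h_P(R)=\frac1c\sum_{P[m]=1}(R-P)[m]
\]
as a sum of nonnegative terms.

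The first key step is to bound the negative part of the spectrum of $Y$ in terms of $h=h_P(R)$. A unit eigenvector of $Y$ with eigenvalue $\lambda<0$ must be compatible with $Y[u_m]\ge0$ for all normalized minimal vectors $u_m$, where $u_m=P^{1/2}m$ has norm one. Because the $u_m$ are well spread (the $24$ roots of $D_4$, the $20$ roots of $A_4$), I would compute the extreme rays of the cone $\{Y:Y[u_m]\ge0\}\cap\{\tr Y=h\}$. For $D_4$ I expect the worst direction to have spectrum $(-h/2,-h/2,h,h)$. This corresponds to the edges of the Ryshkov polyhedron from $P_D$ to a neighbouring vertex, which in dimension four are again $D_4$- or $A_4$-type. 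For $A_4$ I expect the worst spectrum to be $(-3h/2,5h/2,0,0)$. On these extremal spectra the products are exactly $(1-r/2)^2(1+r)^2$ and $(1-3s/2)(1+5s/2)$, matching \eqref{eq:barrier}. The hypotheses $r<2$ and $s<2/3$ are exactly what keeps every factor positive.

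The second step minimizes $\prod(1+\lambda_i)$, or equivalently the concave function $\sum\log(1+\lambda_i)$, over the admissible spectra with fixed trace. By concavity the minimum sits at an extreme point of the admissible polytope, so it is one of the edge spectra computed above. I would also check that the resulting bound is increasing in $h$ on $(0,2)$, respectively $(0,2/3)$. For $D_4$ this holds because
\[
 \frac{\dd}{\dd h}\log\bigl((1-h/2)^2(1+h)^2\bigr)=\frac{-1}{1-h/2}+\frac{2}{1+h}>0 \quad\text{iff}\quad h<1/2,
\]
so I must be careful here. Where monotonicity fails, I would instead use concavity of $\log\det$ along the segment from $P$ to $R$. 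The endpoint $P$ has the smallest value, and the region $\{h_P\ge r\}$ is treated by checking the bound at the point of the segment where $h_P=r$ together with the far part of the polyhedron.

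The main obstacle is globalization: passing from the tangent cone at a single vertex to all of $\Ry$, whose recession directions are PSD. I would use the decomposition $\Ry=\mathrm{conv}(\DD\cup\Acl)+\overline{\Sym_4^+}$. Adding a PSD matrix only increases $\det$, so it suffices to treat $R$ in a face of the vertex hull. Such an $R$ lies in a Delaunay-type cell spanned by finitely many neighbouring vertices, up to $\GL_4(\Z)$. Writing $R$ as a convex combination of those vertices and using concavity of $\log\det$ bounds $\det R$ below by an interpolation of vertex determinants. Combined with the local spectral bound near whichever vertex has the largest weight, this should yield \eqref{eq:barrier}. The finite cell enumeration follows from the Voronoi classification of the two perfect forms in dimension four.
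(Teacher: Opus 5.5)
Your local step is right and matches the paper's Lemmas~\ref{lem:tangent} and~\ref{lem:Atangent}. For every $R\in\Ry$, not only $R$ near $P$, the difference $R-P$ lies in the tangent cone at $P$. So on the exact level set $h_P(R)=h$, concavity of $\log\det$ over the trace section gives $\det R\ge\frac14(1-h/2)^2(1+h)^2$, respectively $\frac5{16}(1-3h/2)(1+5h/2)$. You still have to verify that the spectra you ``expect'' really are the vertex spectra, and which of them gives the smaller determinant.

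The gap is in passing from the level set $\{h_P=r\}$ to the actual hypothesis $h_P\ge r$ for all $P$. As you noticed, the bound is not monotone in $h$: it decreases for $h>1/2$ and is meaningless for $h\ge2$. None of your proposed fixes closes this.
\begin{itemize}
\item Concavity of $\log\det$ along $[P,R]$ bounds interior points of the segment from below by the endpoint values. It gives no lower bound at the endpoint $R$ from the value at the point where $h_P=r$.
\item The decomposition $R=R_1+S$, with $R_1\in\mathrm{conv}(\DD\cup\Acl)$ and $S\succeq0$, does not reduce the problem to $R_1$. Indeed $h_P(R_1)=h_P(R)-\tr(P^{-1}S)$ can drop below $r$. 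For example, $R=P+tI$ with $t$ large satisfies every trace constraint, but $R_1=P$ does not.
\item Interpolating vertex determinants by concavity gives only $\det R\ge\min\{1/4,5/16\}=1/4$. This is weaker than \eqref{eq:barrier} in the range that matters: at $r=11/25$, $s=1/10$ the right side is $351^2/25^4>5/16$.
\item ``Combining with the local bound at the vertex of largest weight'' is not an argument, since $h_{P_1}(R)$ is then uncontrolled.
\end{itemize}

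The missing idea is to minimize $\det$ over the whole convex set
\[
\mathcal C=\Ry\cap\{h_P\ge r\ (P\in\DD),\ h_P\ge s\ (P\in\Acl)\}
\]
and show that a minimizer lies exactly on a trace hyperplane. The paper does this in four steps.
\begin{itemize}
\item The infimum is attained: integral invariance lets you take Minkowski-reduced minimizing sequences, and Lemma~\ref{lem:compact} supplies a limit.
\item $\mathcal C$ is locally polyhedral, since only finitely many Ryshkov and trace constraints can be active near any point.
\item Strict concavity of $\log\det$ forces the minimizer to be an extreme point, hence a vertex, of $\mathcal C$.
\item A vertex of $\mathcal C$ with no active trace constraint would be a vertex of $\Ry$, i.e.\ a perfect form $P$. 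But $P$ violates its own constraint, because $h_P(P)=0$.
\end{itemize}
Hence $h_P(R_0)=r$ or $h_P(R_0)=s$ for some $P$. The tangent-cone bound is then applied exactly at $h=r$ or $h=s$, so no monotonicity in $h$ is needed, and no Voronoi cell enumeration either.
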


Reduction ensures that the determinant minimum is attained.
Concavity reduces the minimum to a vertex of the truncated Ryshkov
domain. Every such vertex lies on a trace boundary, where the
tangent-cone estimates give the bound.

\begin{lemma}[Compactness of reduced forms]
\label{lem:compact}
Fix $\varepsilon>0$ and $D_0>0$. The Minkowski-reduced matrices
$Q\in\Sym_4^+$ satisfying
\[
 q_{11}\ge\varepsilon,\qquad \det Q\le D_0
\]
form a compact subset of $\Sym_4^+$. In particular, the determinant-one
reduced forms with $a(Q)\ge\varepsilon$ form a compact set.
\end{lemma}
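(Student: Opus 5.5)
The plan is to bound all entries of $Q$ and show the set is closed in $\Sym_4^+$, using only the Minkowski reduction conditions and the fundamental inequality \eqref{eq:Barnes}.

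\emph{Step 1: bound the diagonal.} By \eqref{eq:order}, $q_{ii}\ge q_{11}\ge\varepsilon$ for every $i$. Combining this with \eqref{eq:Barnes} gives
\[
 D_0\ge\det Q\ge\tfrac14 q_{11}q_{22}q_{33}q_{44}\ge\tfrac14\varepsilon^3 q_{44}.
\]
Hence $q_{44}\le 4D_0\varepsilon^{-3}$, and by \eqref{eq:order} every diagonal entry lies in $[\varepsilon,4D_0\varepsilon^{-3}]$.

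\emph{Step 2: bound the off-diagonal entries.} The reduction inequalities give $2|q_{ij}|\le q_{ii}$ for $i<j$. So every entry of $Q$ is bounded by $4D_0\varepsilon^{-3}$, and the set is bounded in the space of symmetric matrices.

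\emph{Step 3: closedness.} Let $Q^{(n)}\to Q$ be a convergent sequence of such matrices.
\begin{itemize}
\item[(a)] The conditions $q_{11}\ge\varepsilon$ and $\det Q\le D_0$ are closed, so they pass to the limit.
\item[(b)] The Minkowski conditions \eqref{eq:Minkowski-definition} are non-strict inequalities $Q[m]\ge q_{ii}$, each linear in $Q$. An intersection of such closed half-spaces is closed, so $Q$ is reduced as a positive semidefinite matrix.
\item[(c)] The key point is that $Q$ stays positive definite, i.e.\ the limit does not escape to the boundary of $\Sym_4^+$. Inequality \eqref{eq:Barnes} is polynomial in the entries, so it survives the limit and gives $\det Q\ge\tfrac14\varepsilon^4>0$.
\end{itemize}
One caveat on (c): \eqref{eq:Barnes} was stated for reduced positive definite forms. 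To justify passing to the limit, I would apply it along the sequence, $\det Q^{(n)}\ge\tfrac14\varepsilon^4$, and then use continuity of $\det$ to get $\det Q\ge\tfrac14\varepsilon^4$. A positive semidefinite matrix with positive determinant is positive definite, so $Q\in\Sym_4^+$.

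With boundedness and closedness, Heine--Borel gives compactness. This closedness step is the only delicate point.

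\emph{Step 4: the "in particular" statement.} For reduced $Q$ we have $q_{11}=a(Q)$. Take $D_0=1$: the determinant-one reduced forms with $a(Q)\ge\varepsilon$ are then the intersection of the compact set above with the closed set $\{\det Q=1\}$, hence compact.
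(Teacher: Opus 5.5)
Your proposal is correct and follows essentially the same route as the paper: you bound the diagonal by ordering plus \eqref{eq:Barnes}, bound the off-diagonal entries, use $\det Q\ge\varepsilon^4/4$ to keep limits inside $\Sym_4^+$, and use closedness of the reduction inequalities. The differences are minor. You bound $q_{ij}$ via $2|q_{ij}|\le q_{ii}$ where the paper uses $|q_{ij}|\le\sqrt{q_{ii}q_{jj}}$, and you get positivity of the limit from ``positive semidefinite with positive determinant'' where the paper phrases it as uniform two-sided eigenvalue bounds.
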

\begin{proof}
The ordering of the diagonal entries and
\eqref{eq:Barnes} give
\[
 q_{ii}\le\frac{4D_0}{\eps^3},\qquad |q_{ij}|\le\sqrt{q_{ii}q_{jj}}\le\frac{4D_0}{\eps^3},
 \qquad \det Q\ge\frac{\eps^4}{4}.
\]
The entry bounds give a uniform upper bound for the eigenvalues;
the positive determinant bound then gives a uniform lower bound.
The reduction inequalities are closed, so every sequence has a
subsequence converging to a matrix in the same set.
\end{proof}

\label{subsec:tangentgeometry}
The minimum-one $D_4$ lattice has unit roots
\[
 \mathcal V=\{(\pm e_i\pm e_j)/\sqrt2:1\le i<j\le4\}.
\]
Define
\begin{equation}\label{eq:tangent}
 \T=\{C=C^T:\tr C=1,\ c_{ii}+c_{jj}\ge2|c_{ij}|
                                  \ (1\le i<j\le4)\}.
\end{equation}
\begin{lemma}[The $D_4$ tangent section]
\label{lem:tangent}
The trace-one polytope $\T$ in \eqref{eq:tangent} has $64$ vertices:
$48$ have spectrum $(-1/2,0,0,3/2)$ and $16$ have spectrum
$(-1/2,-1/2,1,1)$. Every $C\in\T$ satisfies
\begin{equation}\label{eq:tangentbounds}
 -\tfrac12I\preceq C\preceq\tfrac32I,
 \qquad 0\le v^TCv\le1\quad(v\in\mathcal V).
\end{equation}
Moreover, for every $r\in[0,2)$,
\begin{equation}\label{eq:detD}
 (1-r/2)^2(1+r)^2\le\det(I+rC)\le(1+r/4)^4.
\end{equation}
\end{lemma}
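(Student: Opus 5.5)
The plan is to treat $\T$ as a trace-one section of the cone cut out by the roots, enumerate its vertices exactly, and then obtain every bound in \eqref{eq:tangentbounds} and \eqref{eq:detD} either from an orthonormal-frame argument or from convexity/concavity evaluated at the vertices.

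\textbf{Reformulation.} For $v=(e_i\pm e_j)/\sqrt2$ we have
\[
 v^TCv=\tfrac12(c_{ii}+c_{jj}\pm2c_{ij}).
\]
So the defining inequalities of $\T$ say exactly that $v^TCv\ge0$ for all $v\in\mathcal V$. Thus $\T$ is the trace-one section of the polyhedral cone $\{C: v^TCv\ge0,\ v\in\mathcal V\}$. This cone has $12$ facet inequalities (one for each antipodal pair of roots) and lives in the $9$-dimensional affine space $\{\tr C=1\}$.

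The whole description is invariant under $C\mapsto gCg^T$ for $g$ in the automorphism group of $D_4$ (order $1152$), since this group permutes $\mathcal V$.

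\textbf{Vertices.} A vertex is a point where $9$ linearly independent constraints are tight. I would enumerate all $\binom{12}{9}=220$ choices of tight sets in exact rational arithmetic. For each choice I solve the linear system, keep the solutions that are feasible, and remove duplicates. I then group the resulting vertices into orbits under the symmetry group and compute a characteristic polynomial for one representative of each orbit. The expected outcome is two orbits:
\begin{itemize}
\item one of size $48$, with spectrum $(-\tfrac12,0,0,\tfrac32)$;
\item one of size $16$, with spectrum $(-\tfrac12,-\tfrac12,1,1)$.
\end{itemize}
This finite enumeration is the main obstacle: it is routine but bookkeeping-heavy. To make it human-checkable, I would first try to exhibit explicit representatives together with their $9$ tight roots, and then argue via stabilizers that there are no further orbits.

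\textbf{Eigenvalue bounds.} Once the vertices are known, the bounds $-\tfrac12I\preceq C\preceq\tfrac32I$ follow by convexity. The largest eigenvalue is a convex function of $C$ and the smallest is concave, so both extremes over $\T$ are attained at vertices, where they equal $\tfrac32$ and $-\tfrac12$.

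\textbf{Bounds on $v^TCv$.} The lower bound $v^TCv\ge0$ is the definition of $\T$. For the upper bound, I complete $v$ to an orthonormal frame of roots; for example, $(e_1\pm e_2)/\sqrt2$ and $(e_3\pm e_4)/\sqrt2$ form such a frame, and every root lies in a frame of this kind. Summing $u^TCu$ over the frame gives $\tr C=1$. Each term is nonnegative, so each term, in particular $v^TCv$, is at most $1$.

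\textbf{Determinant bounds.} For $r\in[0,2)$ every eigenvalue of $I+rC$ is at least $1-r/2>0$, so $I+rC$ is positive definite.
\begin{itemize}
\item \emph{Upper bound.} By AM--GM on the eigenvalues, $\det(I+rC)\le\bigl(\tr(I+rC)/4\bigr)^4=(1+r/4)^4$.
\item \emph{Lower bound.} The map $C\mapsto\log\det(I+rC)$ is concave on $\T$, so its minimum is attained at a vertex. The two vertex types give $(1-r/2)(1+3r/2)$ and $(1-r/2)^2(1+r)^2$.
\end{itemize}
It remains to compare the two vertex values. Dividing by $1-r/2>0$, the inequality $(1-r/2)(1+3r/2)\ge(1-r/2)^2(1+r)^2$ reduces to
\[
 1+\tfrac{3r}{2}\ \ge\ (1-r/2)(1+r)^2=1+\tfrac{3r}{2}-\tfrac{r^3}{2},
\]
which holds for all $r\ge0$. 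Hence the minimum over $\T$ is $(1-r/2)^2(1+r)^2$, which is the lower bound in \eqref{eq:detD}.
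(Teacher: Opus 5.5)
Your derivations after the vertex list are correct and essentially match the paper. Convexity of $\lambda_{\max}$ and concavity of $\lambda_{\min}$ give the spectral bounds from the vertices. Your orthonormal root frame is the paper's complementary-pair argument, since the frame sum is $(c_{ii}+c_{jj})+(c_{kk}+c_{ll})=\tr C=1$. AM--GM gives the upper determinant bound and concavity of $\log\det$ the lower one, and your explicit check $(1-r/2)(1+3r/2)\ge(1-r/2)^2(1+r)^2$ supplies a comparison the paper only asserts.

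\textbf{The gap is the vertex list itself.} The spectral bounds and the lower determinant bound both rest on it, and you do not prove it. You describe an enumeration of the $220$ candidate tight sets and state its ``expected outcome''. Carried out in exact arithmetic, that computation would settle the matter, and your predicted two orbits are in fact correct. But it is not carried out, and your human-checkable substitute does not work as stated. Exhibiting two representatives and computing their stabilizers confirms orbit sizes $48$ and $16$; it cannot show that no further vertices exist.

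\textbf{The missing idea is that the constraints decouple.} The entry $c_{ij}$ occurs only in the two root constraints for the pair $\{i,j\}$. So $\T$ fibres over the diagonal polytope $\{d:\sum_id_i=1,\ d_i+d_j\ge0\}$, with fibre the box $\prod_{i<j}[-(d_i+d_j)/2,(d_i+d_j)/2]$, whose endpoints are affine in $d$.

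The diagonal polytope has exactly eight vertices, the permutations of $(1,0,0,0)$ and $(-\tfrac12,\tfrac12,\tfrac12,\tfrac12)$. Indeed, at most one $d_i$ is negative. If $d_1=-s<0$, then $d_j\ge s$ and $s\le\tfrac12$, so $d=2s(-\tfrac12,\tfrac12,\tfrac12,\tfrac12)+\sum_{j\ge2}(d_j-s)e_j$ is a convex combination.

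Write $c_{ij}=t_{ij}(d_i+d_j)/2$ with $t\in[-1,1]^6$. This parametrization is affine in $d$ for fixed $t$ and affine in $t$ for fixed $d$. Hence $\T$ is the convex hull of the box corners over the eight diagonal vertices. Each such corner is extreme, because its diagonal is extreme and it is a corner of its fibre.

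Over each diagonal vertex exactly three intervals are nondegenerate, giving $8\cdot2^3=64$ vertices:
\begin{enumerate}
\item $32$ signed stars (diagonal $e_i$, entries $c_{ij}=\pm\tfrac12$ for $j\ne i$, all others zero), with spectrum $(-\tfrac12,0,0,\tfrac32)$;
\item $32$ matrices consisting of an isolated diagonal entry $-\tfrac12$ and a $3\times3$ block. After diagonal sign conjugation the block is $\tfrac12\boldsymbol1_3\boldsymbol1_3^T$ or $I_3-\tfrac12\boldsymbol1_3\boldsymbol1_3^T$, according as the product of its three off-diagonal entries is positive or negative. This gives $16$ vertices with spectrum $(-\tfrac12,0,0,\tfrac32)$ and $16$ with spectrum $(-\tfrac12,-\tfrac12,1,1)$.
\end{enumerate}
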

\begin{proof}
The diagonal entries satisfy $\sum_i c_{ii}=1$ and
$c_{ii}+c_{jj}\ge0$. If they are all nonnegative, the diagonal is a
convex combination of the coordinate vectors.
Otherwise its unique negative entry is, say, $c_{11}=-s$, with
$0<s\le1/2$. The other entries are at least $s$; subtracting
$2s(-1/2,1/2,1/2,1/2)$ leaves nonnegative entries of sum $1-2s$.
The diagonal polytope thus has the eight vertices obtained by permuting
$(1,0,0,0)$ and $(-1/2,1/2,1/2,1/2)$.

The off-diagonal entries independently fill
$[-(c_{ii}+c_{jj})/2,(c_{ii}+c_{jj})/2]$, with endpoints affine in
the diagonal. Decompose the diagonal while keeping these relative
positions fixed; the full polytope is then the convex hull of the
endpoint fibres over the eight diagonal vertices. Each fibre has three
nondegenerate intervals, giving $8\cdot2^3=64$ vertices, all extreme
because both the diagonal and the fibre point are extreme.

For diagonal $(1,0,0,0)$, the off-diagonal entries form a star with
weights $\pm1/2$; each vertex matrix has nonzero eigenvalues $-1/2$
and $3/2$, accounting for 32 vertices.
In the other pattern, the negative coordinate is isolated and the
remaining block is a signed triangle. Write
$\boldsymbol1_3=(1,1,1)^T$, so $\boldsymbol1_3\boldsymbol1_3^T$
is the $3\times3$ all-ones matrix. Up to diagonal orthogonal
conjugation, the block is $\frac12\boldsymbol1_3\boldsymbol1_3^T$
or $I_3-\frac12\boldsymbol1_3\boldsymbol1_3^T$ according as the
edge-sign product is positive or negative, with sixteen vertices each.

Convexity gives the spectral bounds. For a root supported on $i,j$,
$v^TCv=(c_{ii}+c_{jj})/2\pm c_{ij}$ lies in $[0,c_{ii}+c_{jj}]$;
the complementary pair gives $c_{ii}+c_{jj}\le1$.
For $r<2$, $I+rC$ is positive definite. Concavity of $\log\det$
reduces the lower bound to the two vertex spectra; the smaller
determinant is $(1-r/2)^2(1+r)^2$. The upper bound follows from
the arithmetic--geometric mean inequality and $\tr C=1$.
\end{proof}

For $P=B^TB\in\DD$, take $B$ as a basis of $2^{-1/2}D_4$
and set $M=B^{-T}RB^{-1}$. The root constraints $v^TMv\ge1$
and $\sum_{v\in\mathcal V}vv^T=6I$ give $h_P(R)\ge0$.
Equality makes every root value of $M-I$ vanish; the two signs on
each coordinate pair then force $M=I$. For $r=h_P(R)>0$,
\begin{equation}\label{eq:physicalcap}
 M=I+rC,\quad C\in\T,\quad
 \det R=\tfrac14\det M.
\end{equation}
If $R=Q/a(Q)$ and $\det Q=1$, the represented covolume-one lattice is
isometric to $A^{1/2}\D$, where
\begin{equation}\label{eq:Anormalization}
 A=\frac{M}{(\det M)^{1/4}}.
\end{equation}
Indeed, $Q=a(Q)B^TMB$ and $\det B^TB=1/4$ give
$a(Q)=\sqrt2(\det M)^{-1/4}$. Thus
\[
 Q=(2^{1/4}B)^TA(2^{1/4}B),
\]
where $2^{1/4}B$ is a basis of $\D$. In particular,
$\det A=1$ and $\tr\log A=0$.

At $A_4$, the trace section is a simplex. For the representative
$P_A$ in \eqref{eq:minimum-one-forms},
\[
 P_A^{-1}=2I_4-\tfrac25\boldsymbol1\boldsymbol1^T.
\]
In the basis $2^{-1/2}B_A$, the ten antipodal root pairs have
integer coordinate representatives $e_i$ and $e_i-e_j$.
The tangent cone is given by $K=K^T$, $K[e_i]\ge0$ and
$K[e_i-e_j]\ge0$. The coordinates $y_i=K[e_i]$ and
$y_{ij}=K[e_i-e_j]$ determine $K$ through
$K_{ij}=(y_i+y_j-y_{ij})/2$, and
\begin{equation}\label{eq:Atrace}
 \tr(P_A^{-1}K)=\frac25\left(\sum_i y_i+\sum_{i<j}y_{ij}\right).
\end{equation}
The trace-one section is therefore the simplex with one
$y$-coordinate equal to $5/2$ at each vertex and all others zero.

\begin{lemma}[The $A_4$ tangent determinant]
\label{lem:Atangent}
At each vertex $K$ of the trace-one $A_4$ tangent simplex,
$P_A^{-1/2}KP_A^{-1/2}$ has spectrum $(-3/2,0,0,5/2)$. For every $R\in\Ry$
and $P\in\Acl$, one has $h_P(R)\ge0$, with equality precisely when
$R=P$. If $s=h_P(R)<2/3$, then
\begin{equation}\label{eq:detA}
 \det R\ge\frac5{16}(1-3s/2)(1+5s/2).
\end{equation}
\end{lemma}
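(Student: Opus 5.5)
The proof follows the $D_4$ case in Lemma~\ref{lem:tangent}, working in the coordinates adapted to $P$. By integral invariance of the trace comparison we may take $P=P_A$. Write $R=P_A+K$. Since $R\in\Ry$ and $P_A[m]=1$ on the ten antipodal root pairs, the roots give $K[e_i]\ge0$ and $K[e_i-e_j]\ge0$, so $K$ lies in the tangent cone described before the lemma. By \eqref{eq:Atrace}, $h_P(R)=\tr(P_A^{-1}K)$ is a nonnegative combination of the coordinates $y_i,y_{ij}$, so $h_P(R)\ge0$. If $h_P(R)=0$, then all ten coordinates vanish. Because $K_{ij}=(y_i+y_j-y_{ij})/2$ recovers $K$ from them, this forces $K=0$, i.e.\ $R=P$. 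Conversely $h_P(P)=0$.

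For the spectrum, consider first a vertex with $y_1=5/2$ and all other coordinates zero. Then $K=\tfrac52 e_1e_1^T$ is rank one, and $P_A^{-1/2}KP_A^{-1/2}$ would have spectrum $(0,0,0,\tfrac52\,e_1^TP_A^{-1}e_1)=(0,0,0,\tfrac52\cdot\tfrac85)$. This is not $(-3/2,0,0,5/2)$, so the vertex cannot be rank one. The resolution is that the $y$-coordinates are not free: $K$ has ten entries and there are ten coordinates, so at a vertex with $y_1=5/2$ we get $K_{11}=5/4$, $K_{1j}=5/4$ for $j\ne1$, and $K_{jk}=0$ otherwise for $j,k\ne1$. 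This matrix has rank two and indefinite sign.

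I will compute its spectrum relative to $P_A$ via $\tr(P_A^{-1}K)=1$ and $\det$-type invariants of $P_A^{-1}K$ restricted to the two-dimensional range. Equivalently, I will find the two nonzero roots $\lambda$ from the trace, which equals $1$, and from the sum of $2\times2$ principal minors. This should give $\lambda=-3/2$ and $\lambda=5/2$. The same computation applies to vertices with $y_{ij}=5/2$. Rather than repeat it, I will use the $S_5$ symmetry of $A_4$, which acts transitively on the ten root pairs and preserves $P_A$ and the trace. All vertices are therefore conjugate, so checking one suffices.

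For the determinant bound, set $M=P_A^{-1/2}RP_A^{-1/2}=I+sC$, where $C=P_A^{-1/2}(K/s)P_A^{-1/2}$ and $K/s$ lies in the trace-one simplex. Then $\det R=\tfrac5{16}\det(I+sC)$. The spectral bound $C\succeq-\tfrac32 I$, obtained by convexity from the vertices, makes $I+sC$ positive definite for $s<2/3$. The function $C\mapsto\log\det(I+sC)$ is concave on the simplex, so its minimum is attained at a vertex. There $\det(I+sC)=(1-3s/2)(1+5s/2)$, which gives \eqref{eq:detA}.

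The main obstacle is the explicit vertex spectrum computation. This requires getting $P_A^{-1}=2I-\tfrac25\boldsymbol1\boldsymbol1^T$ against the rank-two $K$ correctly, checking that the nonzero eigenvalue product is $-15/4$, and confirming that the $y_{ij}$ vertices really are in the same $S_5$-orbit as the $y_i$ vertices in this basis.
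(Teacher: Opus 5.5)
Your route is the paper's. You reduce to $P=P_A$ and read off $h_{P_A}(R)=\tr(P_A^{-1}K)\ge0$ and its equality case from \eqref{eq:Atrace}. You use the $S_5$ action to reduce to a single vertex; this works because the induced $U\in\GL_4(\Z)$ satisfy $U^TP_AU=P_A$, so $(U^TKU)[m]=K[Um]$ permutes the ten $y$-coordinates and $P_A^{-1}U^TKU=U^{-1}P_A^{-1}KU$. You then finish with $\lambda_{\min}\ge-3/2$ by convexity and concavity of $\log\det$ on the simplex. Those steps are fine.

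The problem is the one explicit computation, which is the whole content of the first assertion. You leave it undone and set it up with the wrong matrix. At the vertex $y_1=5/2$ (all other coordinates zero) one has $K_{11}=K[e_1]=y_1=5/2$, not $5/4$. The matrix you wrote is not in the tangent cone and has the wrong relative trace:
\[
K[e_1-e_j]=\tfrac54-\tfrac52=-\tfrac54<0,\qquad \tr(P_A^{-1}K)=-1.
\]
Its nonzero relative eigenvalues are $3/2$ and $-5/2$. Imposing ``trace $=1$'' from the constraint and taking the principal-minor sum from this matrix would give the right numbers only by the coincidence that both matrices have eigenvalue product $-15/4$.

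Your remark that the $y$-coordinates are ``not free'' is also backwards. The map $K\mapsto(y_i,y_{ij})$ is a linear isomorphism $\Sym_4\to\R^{10}$, and that is exactly why the cone is an orthant and its trace-one section a simplex. The rank-one matrix $\tfrac52e_1e_1^T$ fails only because it also has $y_{1j}=5/2$ for $j\ne1$, so its relative trace is $4$.

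With the correct vertex, $K=\tfrac54(e_1\boldsymbol1^T+\boldsymbol1e_1^T)$, the computation is immediate. Write $K=USU^T$ with $U=(e_1,\boldsymbol1)$ and $S=\tfrac54\left(\begin{smallmatrix}0&1\\1&0\end{smallmatrix}\right)$. The nonzero eigenvalues of $P_A^{-1}K$ are those of
\[
SU^TP_A^{-1}U=\tfrac54\left(\begin{smallmatrix}0&1\\1&0\end{smallmatrix}\right)\left(\begin{smallmatrix}8/5&2/5\\2/5&8/5\end{smallmatrix}\right)=\left(\begin{smallmatrix}1/2&2\\2&1/2\end{smallmatrix}\right),
\]
namely $5/2$ and $-3/2$. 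So the characteristic polynomial is $z^2(z+3/2)(z-5/2)$, as the paper states. With this repair, the rest of your argument goes through and yields \eqref{eq:detA}; the case $s=0$ is the trivial equality $R=P_A$.
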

\begin{proof}
Equation~\eqref{eq:Atrace} gives nonnegativity and its equality case.
Permutations of the five coordinates of $A_4$ act transitively on
the ten antipodal root pairs, and hence on the vertices of the
tangent simplex. It suffices to take $y_1=5/2$ and all other
$y$-coordinates zero; the characteristic polynomial of $P_A^{-1}K$
is $z^2(z+3/2)(z-5/2)$. For $s=0$, equality holds. For $s>0$,
$K=(R-P)/s$ has nonnegative root evaluations and $\tr(P^{-1}K)=1$,
so lies in the tangent simplex after an integral basis change.
Since $I+sP^{-1/2}KP^{-1/2}$ is positive definite for $s<2/3$,
concavity of $\log\det$ gives the bound after multiplication by
$\det P=5/16$.
\end{proof}

\begin{proof}[Proof of Theorem~\ref{prop:barrier}]
Let $\mathcal C\subset\Ry$ be the convex, integrally invariant set
defined by the trace inequalities. The fixed determinants of
$P^{-1}$ give a uniform positive lower bound for $\tr P^{-1}$, so
$\mathcal C$ contains a sufficiently large scalar matrix.
The set $\mathcal C$ and the determinant are invariant under
$R\mapsto U^TRU$, $U\in\GL_4(\Z)$, since both $\DD$ and $\Acl$
are full integral orbits. We may therefore choose a
determinant-minimizing sequence of reduced forms. Its determinants
are bounded and its minima are at least one, so
Lemma~\ref{lem:compact} gives a positive definite subsequential limit;
closedness places it in $\mathcal C$, attaining the infimum.

The constraints are locally finite. Near $R_0\in\Sym_4^+$,
restrict to $R\succeq R_0/2$. A trace constraint can be active or
violated only if $\tr(P^{-1}R_0)\le2(4+\max\{r,s\})$.
Since $R_0\succeq\varepsilon I$ for some $\varepsilon>0$, this bounds
$\tr P^{-1}$ and every entry of $P^{-1}$. For $P=U^TP_0U$, the matrices
$P^{-1}=U^{-1}P_0^{-1}U^{-T}$ lie in a fixed rational lattice,
so only finitely many occur. Likewise, active or violated Ryshkov
constraints require $R_0[m]\le2$, allowing only finitely many $m$.
Thus $\mathcal C$ and $\Ry$ are locally polyhedral. Constraints strict
at $R_0$ remain strict on a smaller neighborhood.

Strict concavity of $\log\det$ implies that every minimizer is a
vertex of $\mathcal C$. At a point where all trace inequalities
are strict, local finiteness gives the same local feasible directions
in $\mathcal C$ and $\Ry$. Thus a vertex of $\mathcal C$ is either
a vertex of $\Ry$ or lies on a trace boundary. Every vertex
$P\in\DD\cup\Acl$ of $\Ry$ is removed by its own trace inequality,
since $h_P(P)=0$. The minimizing vertex $R_0$ therefore lies on a
trace boundary. In particular, either
$h_P(R_0)=r$ for some $P\in\DD$ or $h_P(R_0)=s$ for some $P\in\Acl$.
Lemma~\ref{lem:tangent} or Lemma~\ref{lem:Atangent} bounds
$\det R_0$ from below by the corresponding expression in
\eqref{eq:barrier}; minimality gives the result for every
$R\in\mathcal C$.
\end{proof}

\begin{remark}
The perfect-form classification enters only through the determinant
bound. The theta estimates apply to arbitrary lattice shapes.
\end{remark}

\section{Local comparison and localization}
\label{sec:local}\label{sec:preliminary}\label{sec:short}
\label{subsec:perfect-comparison}
\begin{theorem}[Local comparison at $D_4$]
\label{prop:Dcap}
Let $Q\in\Sym_4^+$ have determinant one, and set $R=Q/a(Q)$.
Suppose that $h_P(R)\le11/25$ for some $P\in\DD$.
Then
\begin{equation}\label{eq:Dcap}
 \Theta(\alpha,Q)\ge\Theta(\alpha,\D)\qquad(\alpha\ge1).
\end{equation}
For each fixed $\alpha\ge1$, equality holds precisely when the lattice
represented by $Q$ is isometric to $\D$.
More precisely, let $A$ be the determinant-one relative deformation
associated with $P$ and $Q$ in \eqref{eq:Anormalization}. For every
$\alpha\ge1$,
\begin{equation}\label{eq:capcoercivity}
 \Theta(\alpha,Q)-\Theta(\alpha,\D)
 \ge\frac{\pi\sqrt2}{440}\|\log A\|_F^2\alpha e^{-\pi\sqrt2\alpha}.
\end{equation}
Here $\|\cdot\|_F$ denotes the Frobenius norm.
\end{theorem}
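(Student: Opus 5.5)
We write the lattice represented by $Q$ as $A^{1/2}\D$ via \eqref{eq:Anormalization}, with $\det A=1$, so that
\[
\Theta(\alpha,Q)=\sum_{w\in\D}e^{-\pi\alpha\,w^TAw}.
\]
Set $X=\log A$, which is symmetric and traceless. The hypothesis $h_P(R)\le 11/25$ combined with \eqref{eq:physicalcap} and Lemma~\ref{lem:tangent} gives $M=I+rC$ with $r\le 11/25$ and $-\tfrac12 I\preceq C\preceq \tfrac32 I$. Hence the spectrum of $A$ lies in an explicit compact interval around $1$, and $\|X\|_{\rm op}$ is bounded by an explicit constant of size about $0.5$.

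The plan is to split $\D$ into shells $S_k=\{w\in\D:|w|^2=\sqrt2\,k\}$. The first shell, the $24$ roots, carries the leading term. For each shell we use Jensen's inequality (convexity of $\exp$) together with the spherical $4$-design property of every $D_4$ shell (Coulangeon). Since $w^TAw=|w|^2\,\widehat w^T e^{X}\widehat w$, the design property gives, on each shell, $\sum_{w\in S_k}\widehat w^T X\widehat w=0$. It also gives $\sum_{w\in S_k}(\widehat w^TX\widehat w)^2=\tfrac{|S_k|}{12}\cdot 2\|X\|_F^2$ for traceless $X$, using the degree-4 moment $\tfrac{1}{d(d+2)}(2\|X\|_F^2+(\tr X)^2)$ with $d=4$, which equals $\tfrac{1}{12}\cdot 2\|X\|_F^2$. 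The second-order term is therefore positive, proportional to $\|X\|_F^2$. To make this usable I would expand
\[
\widehat w^T e^{X}\widehat w = 1+\widehat w^TX\widehat w+\tfrac12\widehat w^TX^2\widehat w+\cdots,
\]
note that $\sum_{S_k}\widehat w^TX^2\widehat w=\tfrac{|S_k|}{4}\|X\|_F^2\ge0$ by the $2$-design property, and use
\[
e^{-t(1+u)}\ge e^{-t}\bigl(1-tu+\tfrac{t^2u^2}{2}-\tfrac{t^3|u|^3}{6}\bigr),\qquad t=\pi\alpha\sqrt2\,k .
\]
For the first shell, the quadratic part $\tfrac{t^2}{2}\cdot\tfrac{2}{12}\|X\|_F^2$ minus the linear $X^2$ contribution $t\cdot\tfrac{1}{8}\|X\|_F^2$ is positive once $t=\pi\sqrt2\alpha\ge\pi\sqrt2$, since $\alpha\ge1$. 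This produces a term $c\,\alpha\|X\|_F^2 e^{-\pi\sqrt2\alpha}$, and the constant $\pi\sqrt2/440$ should emerge after absorbing the cubic remainder. The cubic remainder is controlled by $|u|\le\|X\|_{\rm op}e^{\|X\|_{\rm op}}$ together with $\|X\|_{\rm op}^3\le\|X\|_{\rm op}\|X\|_F^2$, which is where the smallness $r\le 11/25$ is used.

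For the higher shells ($k\ge2$), I would avoid the full expansion. Instead I would use the simpler Jensen bound on each shell,
\[
\frac{1}{|S_k|}\sum_{S_k}e^{-t\,\widehat w^Te^X\widehat w}\ge \exp\Bigl(-\frac{t}{|S_k|}\sum_{S_k}\widehat w^Te^X\widehat w\Bigr)=e^{-t\tr(e^X)/4},
\]
together with $\tr e^X\ge 4$ (AM–GM with $\det=1$). This bound alone is not useful, since $\tr e^X\ge4$ points in the wrong direction for a lower bound. So for $k\ge2$ I would instead apply the same third-order inequality, now with the negative term $-t\,\tfrac{|S_k|}{8}\|X\|_F^2$ (from the $X^2$ part) summed against the weights $e^{-\pi\sqrt2\alpha k}$. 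Because $\alpha\ge1$, these tails are exponentially smaller, with relative size of order $|S_k|\,k\,e^{-\pi\sqrt2(k-1)}$, and $|S_2|=24$, $|S_3|=96$ for $D_4$. They can therefore be absorbed into the first-shell gain, again possibly after a $k$-dependent positive quadratic term helps.

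Equality holds iff $X=0$, i.e.\ $A=I$, which is exactly orthogonal equivalence to $\D$ by the normalization; and \eqref{eq:Dcap} follows from \eqref{eq:capcoercivity}. I expect the main obstacle to be the quantitative bookkeeping of the cubic remainder and the higher-shell negative terms uniformly in $\alpha\ge1$ and over the full cap $r\le11/25$. Near $\alpha=1$ the first-shell margin $\tfrac{t^2}{12}-\tfrac{t}{8}$ is only moderate, and $\|X\|_{\rm op}$ may be near $0.4$, so the constant $1/440$ likely requires careful explicit (possibly interval-verified) estimates rather than crude bounds.
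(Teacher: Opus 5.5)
Your architecture matches the paper's: shell decomposition, design moments, the root shell carrying the gain, and $m$ converted into $\|\log A\|_F^2$. But the decisive step fails, and not merely for bookkeeping reasons. The cubic Taylor minorant is too weak on the cap already at $\alpha=1$.

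\textbf{The root-shell minorant fails at the cap boundary.} Take $r=11/25$ and the vertex $C=\operatorname{diag}(-\tfrac12)\oplus(I_3-\tfrac12\boldsymbol1_3\boldsymbol1_3^T)\in\T$, which has spectrum $(-\tfrac12,-\tfrac12,1,1)$.
\begin{itemize}
\item On the roots, $v^TCv\in\{0,1\}$, with value $1$ on exactly six of the $24$. So $\delta_v=v^TAv/|v|^2-1$ takes only the values $\delta_+\approx0.3587$ (weight $\tfrac14$) and $\delta_-\approx-0.0564$ (weight $\tfrac34$).
\item Hence $m\approx0.0474$, the shell mean of $\delta_v^2$ is $\approx0.0346$, and the shell mean of $|\delta_v|^3$ is $\approx0.0117$.
\item With $t=\pi\sqrt2$, your inequality gives at best $1-tm+\tfrac{t^2}{2}\cdot0.0346-\tfrac{t^3}{6}\cdot0.0117\approx1-0.210+0.341-0.171\approx0.96<1$. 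This uses exact moments; your control through $\|X\|_{\mathrm{op}}e^{\|X\|_{\mathrm{op}}}$ is cruder still.
\item The true first-shell average is $\approx1.0145$.
\end{itemize}
The gain comes from a delicate cancellation between the positive mean displacement $m$ and the variance, with $t\delta_v$ up to about $1.6$. A cubic Taylor polynomial loses much more than that. You also cannot go to higher order for free: the root shell is only a $5$-design, so moments of $\delta_v$ of order $\ge3$ are not fixed by symmetry.

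The paper instead uses the Hermite interpolant of Lemma~\ref{lem:two-moment}. It needs only $m$, $V$ and an upper bound $b$ on the displacements, and it is exact for two-valued distributions whose top value is $b$; here it reproduces $\approx1.0145$. It is combined with the sharper root ceiling $\delta_v\le m+\eta$ of \eqref{eq:rootstats}. That ceiling comes from $v^TCv\le1$ in Lemma~\ref{lem:tangent}, not from $\lambda_{\max}(A)$. Lemma~\ref{lem:rootcert} is then verified on all of \eqref{eq:wrelax} by interval bounds.

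Separately, your fourth moment is off by a factor of $2$. For traceless $X$, the shell average of $(\widehat w^TX\widehat w)^2$ is $\|X\|_F^2/12$. So the near-identity coefficient is $t(t-3)/24$, not $t^2/12-t/8$.

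\textbf{Uniformity in $\alpha$ needs convexity in the scale.} For fixed $A\ne I_4$, any Taylor-type minorant has a remainder growing like $t^3$ against a gain growing like $t^2$. Your bound therefore turns negative as $\alpha\to\infty$, for every $r>0$, so smallness of $r$ cannot rescue it. The missing idea is that $s\mapsto\Hh_s$ is convex with $\Hh_0=1$; the same holds for the shell average $s\mapsto|S_\rho|^{-1}\sum_v e^{-s\delta_v}$. Hence $\Hh_s-1\ge(s/s_0)(\Hh_{s_0}-1)$ for $s\ge s_0$. This has two consequences in the paper.
\begin{itemize}
\item One estimate at $s_0=22/5<\pi\sqrt2$ on the root shell covers every $\alpha\ge1$ and produces the factor $\alpha$ in \eqref{eq:capcoercivity}.
\item One estimate at $s_0=44/5<2\pi\sqrt2$ (Lemma~\ref{lem:highband}) makes each higher shell nonnegative on its own. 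Nothing then has to be absorbed into the first shell. This matters because on the higher shells $t|\delta_v|$ reaches about $3$ at cap vertices, where a cubic Taylor bound is useless.
\end{itemize}
The constant is then $24\cdot\tfrac1{880}\cdot\tfrac1{12}=\tfrac1{440}$. It comes from the root-shell relative excess $sm/880$ together with $m\ge\|\log A\|_F^2/12$.
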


The second and fourth shell moments, combined with a quadratic
minorant of the exponential, give lower bounds in terms of the
eigenvalues of $A$. The tangent-cone estimates sharpen
the first-shell bound, yielding the quantitative gap, while every
higher shell contributes a nonnegative difference.

The nonzero squared lengths of $\D$ are positive integral multiples
of $\sqrt2$; its first shell consists of the $24$ vectors
$2^{-1/4}(\pm e_i\pm e_j)$, $i<j$.

\begin{lemma}[Shell moment identities]
\label{lem:moments}
Let $\rho>0$ be a squared length represented by $\D$, and put
$S_\rho=\{v\in\D:|v|^2=\rho\}$.
For every real symmetric matrix $K$,
\begin{align}
 \frac1{|S_\rho|}\sum_{v\in S_\rho}\frac{v^TKv}{\rho}
 &=\frac{\tr K}{4},\label{eq:moment2}\\
 \frac1{|S_\rho|}\sum_{v\in S_\rho}
                  \left(\frac{v^TKv}{\rho}\right)^2
 &=\frac{(\tr K)^2+2\tr K^2}{24}.\label{eq:moment4}
\end{align}
\end{lemma}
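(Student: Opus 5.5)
The plan is to show that every nonempty shell $S_\rho$ of $\D$ is a spherical $4$-design, and then evaluate the two averages by $O(4)$-invariance. Concretely, I would use the Weyl group $W(F_4)$, which is the automorphism group of $D_4$ and acts on each shell $S_\rho$. The two averaged quantities are polynomials in $K$ of degrees one and two. Both are invariant under $K\mapsto g^TKg$ for $g\in\mathrm{Aut}(\D)$. The key claim is that the shell averages of $v\otimes v$ and $v^{\otimes4}$ agree with the corresponding averages over the round sphere of radius $\sqrt\rho$. Granting this, the identities follow from the standard Gaussian/spherical moments in $\R^4$. If $u$ is uniform on $S^3$, then $\mathbb E[u^TKu]=\tr K/4$. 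Also
\[
\mathbb E[(u^TKu)^2]=\frac{(\tr K)^2+2\tr K^2}{d(d+2)},\qquad d(d+2)=24,
\]
which is exactly \eqref{eq:moment2} and \eqref{eq:moment4}.

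For the design property I would argue via invariant theory. The space of $W(F_4)$-invariant homogeneous polynomials of degree $2$ on $\R^4$ is spanned by $|x|^2$. The invariant quartics are likewise spanned by $|x|^4$, because the basic invariants of $F_4$ have degrees $2,6,8,12$, so there is no new invariant in degree $4$. Now take a homogeneous polynomial $p$ of degree $2$ or $4$. The shell average of $p$ equals the shell average of its $W$-symmetrization $\bar p$, since $S_\rho$ is $W$-stable. The same holds for the sphere average, since the sphere is $O(4)$-stable. By the above, $\bar p=c|x|^{\deg p}$, which takes the constant value $c\rho^{\deg p/2}$ on both sets. Hence the two averages coincide. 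Odd moments vanish anyway by the antipodal symmetry $v\mapsto -v$.

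To avoid quoting the invariant degrees of $F_4$, I could give an elementary alternative using only the signed permutation group $B_4\subset W(F_4)$. $B_4$-invariance reduces the second moment to $\tr K/4$ immediately. For the quartic, $B_4$-invariant quartics are spanned by $\sum x_i^4$ and $(\sum x_i^2)^2$. So it suffices to check $\sum_{v\in S_\rho}v_1^4=3\sum_{v\in S_\rho}v_1^2v_2^2$ on each shell, which is the sphere ratio. This is where the extra element of $W(F_4)\setminus B_4$ enters, namely the Hadamard-type reflection $x\mapsto x-\tfrac12(x\cdot\boldsymbol1)\boldsymbol1$, which preserves $D_4$ up to isometry. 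Under this map, $\sum x_i^4$ is sent to a combination of $\sum x_i^4$ and $(\sum x_i^2)^2$ with a different coefficient, unless the relation holds. Averaging over the invariant shell then forces the ratio $3$. This step is the main obstacle, since one must verify carefully that this map (composed with a signed permutation) preserves $\D$ and compute how it transforms $\sum x_i^4$. I expect that to be a short explicit computation.

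Finally, translate to $K$. Write $v^TKv=\langle K,vv^T\rangle$. The degree-two average gives $\frac1{|S_\rho|}\sum vv^T=\frac{\rho}{4}I$, yielding \eqref{eq:moment2}. The degree-four average gives
\[
\frac1{|S_\rho|}\sum_{v\in S_\rho}(v^TKv)^2=\rho^2\,\mathbb E[(u^TKu)^2],
\]
and the standard isotropic fourth moment $\mathbb E[u_iu_ju_ku_l]=(\delta_{ij}\delta_{kl}+\delta_{ik}\delta_{jl}+\delta_{il}\delta_{jk})/24$ yields \eqref{eq:moment4}.
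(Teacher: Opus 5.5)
Your proposal is correct. Its main route differs from the paper's, while your elementary fallback is essentially the paper's proof.

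The paper uses no invariant theory. It works only with coordinate permutations, sign changes and one explicit Hadamard matrix $H\in\operatorname{Aut}(D_4)$. From these it gets $A=(4A+36B)/16$ and $4A+12B=|S_\rho|\rho^2$ for $A=\sum_{S_\rho}v_1^4$ and $B=\sum_{S_\rho}v_1^2v_2^2$. It then expands $\sum_{S_\rho}(v^TKv)^2=A\sum_ik_{ii}^2+2B\sum_{i<j}k_{ii}k_{jj}+4B\sum_{i<j}k_{ij}^2$ by hand, never comparing with the sphere.

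Your primary argument instead shows that every shell is a spherical $4$-design. It uses Chevalley's theorem for $W(F_4)$: the basic degrees are $2,6,8,12$, so $|x|^4$ spans the invariant quartics. You then import the isotropic sphere moments with $d(d+2)=24$. This is more conceptual: it identifies the lemma as exactly the design property behind Coulangeon's criterion mentioned in the introduction. It also applies unchanged to any lattice whose automorphism group has no invariants of degree $2$ or $4$ besides powers of $|x|^2$. The price is reliance on an external structural fact, whereas the paper's computation is self-contained.

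In the fallback, the step you flagged as the main obstacle is short.
\begin{itemize}
\item The reflection $s(x)=x-\tfrac12(x\cdot\boldsymbol1)\boldsymbol1$ preserves $D_4$ exactly, with no composition needed. For $x\in D_4$, $s(x)=x-k\boldsymbol1$ with $k=\tfrac12 x\cdot\boldsymbol1\in\Z$, and the coordinate sum stays even.
\item With $\sigma=\sum_ix_i$ and $p_k=\sum_ix_i^k$, one has $\sum_is(x)_i^4=p_4-2\sigma p_3+\tfrac32\sigma^2p_2-\tfrac14\sigma^4$. This is not itself in the span of $p_4$ and $p_2^2$.
\item Summing over the sign-invariant shell turns it into $A+9B$. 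Equating with $\sum_{S_\rho}p_4=4A$ gives $A=3B$, the same relation the paper extracts from $H$.
\end{itemize}
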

\begin{proof}
Each shell is invariant under coordinate permutations, sign changes,
and the orthogonal Hadamard matrix
\[
 \frac12\begin{pmatrix}
 1&1&1&1\\1&1&-1&-1\\1&-1&1&-1\\1&-1&-1&1
 \end{pmatrix},
\]
which preserves $D_4$ and $\D$. For
$A=\sum v_1^4$ and $B=\sum v_1^2v_2^2$ over $S_\rho$,
sign symmetry and Hadamard invariance give $A=(4A+36B)/16$,
hence $A=3B$.
Also $4A+12B=|S_\rho|\rho^2$, so
$A=|S_\rho|\rho^2/8$ and $B=|S_\rho|\rho^2/24$.
The same symmetries give
$\sum v_i v_j=(|S_\rho|\rho/4)\delta_{ij}$, proving
\eqref{eq:moment2}. For $K=(k_{ij})$, the fourth-moment expansion is
\[
 \sum_{v\in S_\rho}(v^TKv)^2
 =A\sum_i k_{ii}^2+2B\sum_{i<j}k_{ii}k_{jj}
                         +4B\sum_{i<j}k_{ij}^2.
\]
By $A=3B$ and $\tr K^2=\sum_i k_{ii}^2+2\sum_{i<j}k_{ij}^2$,
the right-hand side equals $B((\tr K)^2+2\tr K^2)$. Division by
$|S_\rho|\rho^2$ gives \eqref{eq:moment4}.
\end{proof}

The next lemma bounds an exponential sum in terms of its first two
moments. We apply it to shell displacements with equal weights.
\begin{lemma}[An exponential bound from two moments]
\label{lem:two-moment}
Let $\delta_1,\ldots,\delta_N\le b$ be real numbers, and let
$\omega_j\ge0$ satisfy $\sum_{j=1}^N\omega_j=1$. Set
\[
 m=\sum_{j=1}^N\omega_j\delta_j,\qquad
 V=\sum_{j=1}^N\omega_j(\delta_j-m)^2,\qquad \eta=b-m,
\]
and assume that $m<b$ and $V>0$. Then, for every $s>0$,
\begin{equation}\label{eq:HermiteH}
 \sum_{j=1}^N\omega_j e^{-s\delta_j}\ge
 \Hh_s(m,V,\eta):=
 e^{-sm}\frac{\eta^2e^{sV/\eta}+Ve^{-s\eta}}{\eta^2+V}.
\end{equation}
\end{lemma}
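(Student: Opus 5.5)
The approach is a quadratic interpolation (Hermite-type) minorant of the exponential, anchored at $b$, followed by a short optimization. Substituting $x=\delta-m$, we have $x\le\eta$ with mean $0$ and variance $V$. We seek a function $g(x)=\alpha+\beta x+\gamma x^2$ with $g(x)\le e^{-sx}$ for all $x\le\eta$. Then
\[
 \sum_j\omega_je^{-s\delta_j}=e^{-sm}\sum_j\omega_je^{-sx_j}\ge e^{-sm}(\alpha+\gamma V).
\]
The bound \eqref{eq:HermiteH} is the sharp two-point value. It is attained by the extremal distribution supported on $\{-V/\eta,\eta\}$ with weights $\eta^2/(\eta^2+V)$ and $V/(\eta^2+V)$, which has mean $0$ and variance $V$. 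So we choose $g$ to be the quadratic that is tangent to $e^{-sx}$ at $x_0=-V/\eta$ and agrees with it at $x=\eta$. The equal contact at these two support points then forces $\alpha+\gamma V=\sum p_ie^{-sx_i}$, which is exactly $\frac{\eta^2e^{sV/\eta}+Ve^{-s\eta}}{\eta^2+V}$.

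The key step is to verify $e^{-sx}-g(x)\ge0$ on $(-\infty,\eta]$. Set $\phi(x)=e^{-sx}-g(x)$. It has a double zero at $x_0$ and a simple zero at $\eta$, and $\phi'''(x)=-s^3e^{-sx}<0$. By Rolle's theorem, $\phi$ can have at most three zeros counted with multiplicity, so these are all of them. As $x\to-\infty$, $\phi\to+\infty$ whatever the sign of $\gamma$, since the exponential dominates. Then $\phi$ does not change sign at the double zero $x_0$, and changes sign only at $\eta$. Hence $\phi\ge0$ on $(-\infty,\eta]$. This is where $\delta_j\le b$ is used, since beyond $\eta$ the inequality fails.

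The remaining step is to confirm that $\alpha+\gamma V$ equals the stated expression without solving for the coefficients explicitly. The identity $\alpha+\gamma V=\sum_i p_ig(x_i)$ holds for any quadratic when $(p_i,x_i)$ has mean $0$ and variance $V$. Since $g(x_i)=e^{-sx_i}$ at both support points, this gives exactly $\Hh_s$. We also need $x_0<\eta$, which follows from $V>0$ and $\eta>0$. The main point of care is the sign argument for $\phi$; the contact-point counting via $\phi'''<0$ handles it cleanly.
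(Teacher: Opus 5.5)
Your proposal is correct and is essentially the paper's proof. Both use the same quadratic Hermite interpolant of the exponential, tangent at $m-V/\eta$ (your $x_0$ after the shift) and matching at $b$, together with the observation that the two-point law with weights $\eta^2/(\eta^2+V)$ and $V/(\eta^2+V)$ reproduces the first two moments. The only difference is how the sign is certified: you show $e^{-sx}\ge g(x)$ on $(-\infty,\eta]$ by a Rolle zero count using $\phi'''<0$ and the behaviour at $-\infty$, whereas the paper quotes the Hermite remainder $\frac{f^{(3)}(\xi)}{3!}(t-z)^2(t-b)\ge0$ for $t\le b$, which itself rests on the same Rolle argument.
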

\begin{proof}
Set $z=m-V/\eta$ and let
\[
 p=\frac{\eta^2}{\eta^2+V},\qquad q=\frac{V}{\eta^2+V}.
\]
Then $p+q=1$, $pz+qb=m$, and
$p(z-m)^2+q(b-m)^2=V$. Consequently,
$\sum_{j=1}^N\omega_j Q(\delta_j)=pQ(z)+qQ(b)$
for every polynomial $Q$ of degree at most two.

Let $q_2$ be the quadratic interpolant to $f(t)=e^{-st}$ at $z,b$,
with derivative $f'$ at $z$. The Hermite remainder gives
\[
 f(t)-q_2(t)
 =\frac{f^{(3)}(\xi)}{3!}(t-z)^2(t-b)\ge0,
 \qquad t\le b,
\]
with equality at the nodes; here $f^{(3)}(\xi)=-s^3e^{-s\xi}$.
Summing at $\delta_j$ with coefficients $\omega_j$ gives
\[
 \sum_{j=1}^N\omega_j f(\delta_j)
 \ge\sum_{j=1}^N\omega_j q_2(\delta_j)
 =p f(z)+q f(b)=\Hh_s(m,V,\eta).\qedhere
\]
\end{proof}

For $A\in\Sym_4^+$ with determinant one, put
\begin{equation}\label{eq:spectralstats}
 \overline\lambda=\tfrac14\tr A,\quad m=\overline\lambda-1,
 \quad V=\tfrac1{12}\sum_{i=1}^4(\lambda_i-\overline\lambda)^2.
\end{equation}
Here $\lambda_i$ are the eigenvalues of $A$. The determinant constraint
gives $m\ge0$, with equality only at $A=I_4$, and $V>0$ when
$A\ne I_4$. At $A=I_4$, all displacements vanish and we define
$\Hh_s=1$.
For a shell $S_\rho$, put $\delta_v=v^T(A-I_4)v/\rho$.
Lemma~\ref{lem:moments} gives
\[
 \frac1{|S_\rho|}\sum_{v\in S_\rho}\delta_v=m,\qquad
 \frac1{|S_\rho|}\sum_{v\in S_\rho}(\delta_v-m)^2=V.
\]
Also,
\[
 \frac1{|S_\rho|}\sum_{v\in S_\rho}e^{-\pi\alpha v^TAv}
 =\frac{e^{-\pi\alpha\rho}}{|S_\rho|}
   \sum_{v\in S_\rho}e^{-s\delta_v},\qquad s=\pi\alpha\rho.
\]
For $A\ne I_4$, Lemma~\ref{lem:two-moment} applies with weights
$1/|S_\rho|$ and $b=\lambda_{\max}(A)-1$. On the first shell, root
constraints sharpen the bound for $\delta_v$ in the fixed coordinates.

\begin{lemma}[A moment bound near the identity]
\label{lem:smallband}
Let $A\in\Sym_4^+$ have determinant one and spectrum in
$[9/10,11/10]$, and let $m,V$ be defined by \eqref{eq:spectralstats}.
If $A\ne I_4$ and $m<b\le1/10$, set $\eta=b-m$. Then
\begin{equation}\label{eq:smallband}
 \Hh_{22/5}(m,V,\eta)\ge1+\frac{22}{61}m.
\end{equation}
For $A=I_4$, the left-hand side is defined to be $1$.
\end{lemma}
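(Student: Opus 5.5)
Write $s=22/5$ and
\[
 \Hh_s(m,V,\eta)=e^{-sm}\bigl(p\,e^{sV/\eta}+q\,e^{-s\eta}\bigr),\qquad
 p=\frac{\eta^2}{\eta^2+V},\quad q=\frac{V}{\eta^2+V}.
\]
Since $m<b\le1/10$, both $\eta$ and $V/\eta$ are small, so all exponents are small and the bound should follow from Taylor estimates. The plan is to show $\Hh_s\ge 1+cm$ with $c=22/61$ by combining three ingredients. The first is an elementary lower bound for the convex combination. The second is a comparison $V\ge\kappa m$ coming from the determinant constraint. The third is the upper bound $m\le \eta+\ldots$ that follows from $b\le 1/10$.

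\textbf{Step 1.} Since $e^x$ is convex and $p(sV/\eta)+q(-s\eta)=0$, we get
\[
 \Hh_s\ge e^{-sm}\Bigl(1+\tfrac{s^2}{2}\bigl(p V^2/\eta^2+q\eta^2\bigr)\cdot\theta\Bigr),
\]
where $\theta$ is a lower bound for $e^{x}$ over the relevant range. The quadratic term equals $s^2V/2$ exactly, because $pV^2/\eta^2+q\eta^2=V$. A cleaner route keeps the cubic Taylor term with sign control. On $x\ge -s\eta\ge -s/10$ we have $e^x\ge 1+x+\frac{x^2}{2}+\frac{x^3}{6}$, and the cubic moment is
\[
 p(V/\eta)^3-q\eta^3=\frac{V}{\eta^2+V}\cdot\frac{V^2-\eta^4}{\eta}.
\]
Its sign is uncertain, so I would bound it crudely. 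The result is
\[
 \Hh_s\ge e^{-sm}\Bigl(1+\tfrac{s^2}{2}V-\tfrac{s^3}{6}\eta V\Bigr)\ge e^{-sm}\Bigl(1+\tfrac{s^2}{2}V\bigl(1-\tfrac{s}{30}\bigr)\Bigr),
\]
using $\eta\le 1/10$.

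\textbf{Step 2.} Write $\lambda_i=\overline\lambda(1+x_i)$ with $\sum x_i=0$. Then $\det A=1$ gives
\[
 4\log\overline\lambda=-\sum\log(1+x_i).
\]
For $|x_i|\le 0.2$, this yields
\[
 \log(1+m)\le\frac14\sum\Bigl(\frac{x_i^2}{2}+\frac{|x_i|^3}{3}\cdot c'\Bigr).
\]
Since $V=\overline\lambda^2\sum x_i^2/12$, we obtain $m\le \frac{3}{2}V(1+O(0.1))$, that is, $V\ge \kappa m$ with $\kappa$ slightly below $2/3$. Using the spectral window $[9/10,11/10]$, I would make this explicit, for example $V\ge 0.6\,m$.

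\textbf{Step 3.} Combining the two steps, it suffices to check
\[
 e^{-sm}\Bigl(1+\tfrac{s^2}{2}\kappa(1-\tfrac{s}{30})m\Bigr)\ge 1+cm
\]
for $0\le m\le m_{\max}$. Here $m_{\max}$ is small, of order $10^{-2}$, since $m\le \frac32V\cdot1.1$ and $V\le (0.2)^2/3$ or so. With $s=4.4$ we have $s^2/2=9.68$, and $9.68\cdot0.6\cdot0.85\approx4.9$. After subtracting $s=4.4$ this leaves about $0.5$, which exceeds $22/61\approx0.36$. The second-order term $-s\cdot(\ldots)m^2$ from expanding $e^{-sm}$ must then be absorbed on $[0,m_{\max}]$. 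This is a one-variable calculus check: the function $g(m)=e^{-sm}(1+Km)-1-cm$ vanishes at $0$, has $g'(0)=K-s-c>0$, and is concave-then-convex, so checking $g(m_{\max})\ge0$ suffices.

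The main obstacle is that the margin is thin. The leading coefficient $s^2\kappa/2-s$ must beat $22/61$, and this is sensitive to the constant $\kappa$ in $V\ge\kappa m$ and to the cubic loss. If the crude bounds fail, I would first sharpen Step 2 by using the exact inequality $m\le\frac{3}{2}V\cdot\frac{1}{\overline\lambda^2}\cdot$(a factor from $\min\lambda_i\ge 9/10$). Failing that, I would treat $\Hh_s$ as a function of $(m,V,\eta)$ on the compact region cut out by the constraints and verify the inequality by interval bounds, as in the appendix.
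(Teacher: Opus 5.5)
Your Step 1 is valid, and the overall route can be completed. It differs from the paper's. As written, though, two quantitative steps do not close, and in this lemma the constants are the whole proof.

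\textbf{Step 2.} You assert $V\ge0.6\,m$, but the mechanism you describe does not give it. Steps 1 and 3 need $\kappa>(s+\frac{22}{61})/\bigl(\frac{s^2}{2}(1-\frac{s}{30})\bigr)\approx0.5763$ just to get $g'(0)>0$. A uniform cubic correction on $|x_i|\le\frac15$ requires $c'\approx1.18$ in $x-\log(1+x)\le\frac{x^2}{2}+\frac{c'|x|^3}{3}$. That yields only $\log(1+m)\le1.736\,V$, i.e.\ $\kappa\approx0.576$, which is just below the threshold.

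Your fallback does work, and it gives exactly $\kappa=\frac35$. First, $x-\log(1+x)=\int_0^x\frac{u\,du}{1+u}\le\frac{x^2}{2\min(1,1+x)}$. Second, $1+x_i=\lambda_i/\overline\lambda\ge\frac{9}{10\overline\lambda}$ with $\overline\lambda\ge1$. Together these give $4\log\overline\lambda\le\frac{5\overline\lambda}{9}\sum x_i^2=\frac{20V}{3\overline\lambda}$. Then $\log\overline\lambda\ge m/\overline\lambda$ gives $V\ge\frac35m$.

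\textbf{Step 3.} Take $K=\frac35\cdot\frac{s^2}{2}(1-\frac{s}{30})\approx4.956$. The function $g(m)=e^{-sm}(1+Km)-1-\frac{22}{61}m$ turns negative near $m\approx0.017$. Your estimates $m\le\frac32V\cdot1.1$ and $V\le(0.2)^2/3$ give $m_{\max}\approx0.022$, so the endpoint check fails there. Use instead $12V=\sum(\lambda_i-1)^2-4m^2\le\frac4{100}$, so $V\le\frac1{300}$ and $m\le\frac53V\le\frac1{180}$. Then $g$ is concave on $[0,\frac2s-\frac1K]\supset[0,\frac1{180}]$, and $g(\frac1{180})\approx7\times10^{-4}>0$. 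This completes your argument.

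\textbf{Comparison with the paper.} The paper avoids both issues because it never factors out $e^{-sm}$. Both nodes $s(m-V/\eta)$ and $sb$ are at most $\frac{11}{25}$; this is where $b\le\frac1{10}$ enters. So $e^{-u}\ge1-u+u^2/(2+\frac{11}{25})$ holds at both nodes. The exact first and second moments then give $\Hh_{22/5}\ge1-\frac{22}5m+\frac{484}{61}(V+m^2)$.

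The determinant input is stated for that same moment. Summing $x-1-\log x\le(x-1)^2/(2\cdot\frac9{10})$ over the eigenvalues, with $\log\det A=0$, gives $\tr(A-I)^2=12V+4m^2\ge\frac{36}5m$. Hence $V+m^2\ge\frac35m+\frac23m^2$. Substituting yields exactly $1+\frac{22}{61}m$, with no upper bound on $m$ and no one-variable check.

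Your Step 1 extracts a slightly larger coefficient of $V$ ($\approx8.26$ versus $\frac{484}{61}\approx7.93$). The price is the $e^{-sm}$ prefactor, and that is what forces the $m_{\max}$ analysis.
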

\begin{proof}
For $x\ge9/10$, integration gives
$x-1-\log x\le(x-1)^2/(2(9/10))$.
Summing over the eigenvalues and using $\log\det A=0$ yields
$\tr(A-I)^2\ge(36/5)m$, hence, by \eqref{eq:spectralstats},
$V+m^2\ge3m/5+2m^2/3$.
Apply the elementary inequality
$e^{-u}\ge1-u+u^2/(2+B)$, valid for $u\le B$ and $B\ge0$,
with $B=11/25$ at $u=(22/5)(m-V/\eta)$ and $u=(22/5)b$.
Multiplying by $\eta^2/(\eta^2+V)$ and $V/(\eta^2+V)$, respectively,
and adding gives
\[
 \Hh_{22/5}\ge1-\frac{22}{5}m
       +\frac{(22/5)^2}{2+11/25}(V+m^2)
 \ge1+\frac{22}{61}m.
\]
\end{proof}

\begin{lemma}[The higher-shell moment estimate]
\label{lem:highband}
Let positive numbers $\lambda_1,\ldots,\lambda_4$ satisfy
$2/3\le\lambda_i\le5/3$ and $\prod_{i=1}^4\lambda_i=1$. Set
\[
 \overline\lambda=\frac14\sum_{i=1}^4\lambda_i,\qquad
 m=\overline\lambda-1,\qquad
 V=\frac1{12}\sum_{i=1}^4(\lambda_i-\overline\lambda)^2,\qquad
 \eta=\max_i\lambda_i-\overline\lambda.
\]
Then
\begin{equation}\label{eq:highband}
 \Hh_{44/5}(m,V,\eta)\ge1+\frac{m}{10}.
\end{equation}
At $(\lambda_1,\ldots,\lambda_4)=(1,1,1,1)$, the left-hand side
is defined to be $1$.
\end{lemma}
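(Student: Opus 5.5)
The plan is to read $\Hh_s(m,V,\eta)$ as the expectation of $e^{-sX}$ for the two-point law from the proof of Lemma~\ref{lem:two-moment}. That law puts mass $p=\eta^2/(\eta^2+V)$ at $z=m-V/\eta$ and mass $q=V/(\eta^2+V)$ at $b=m+\eta=\lambda_{\max}-1$. It has mean $m$ and variance $V$, and its support lies in $(-\infty,b]$. Then the argument of Lemma~\ref{lem:smallband} can be repeated with a minorant adapted to the larger spectral window $[2/3,5/3]$. Two ingredients are needed. The first is a lower bound for $V$ in terms of $m$ coming from $\prod\lambda_i=1$. The second is a quadratic minorant $e^{-u}\ge1-u+c(B)u^2$, valid for $u\le B$, where
\[
c(B)=\frac{e^{-B}-1+B}{B^2},
\]
which is decreasing in $B$. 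Since $e^{-u}-1+u-c(B)u^2$ vanishes to second order at $0$, this minorant holds for every $u\le B$, negative $u$ included. Evaluating it at the two nodes $u=sz$ and $u=sb$, both at most $sb$, gives
\[
\Hh_s\ge1-sm+c(sb)\,s^2(V+m^2).
\]

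For the variance bound, use $\sum_i(\lambda_i-1-\log\lambda_i)=4m$ together with $x-1-\log x\le(x-1)^2/(2\lambda_{\min})$. This gives
\[
\tr(A-I)^2\ge8\lambda_{\min}m,
\qquad\text{hence}\qquad
12V+4m^2\ge8\lambda_{\min}m .
\]
In the worst case $\lambda_{\min}=2/3$ this yields $V\ge4m/9-m^2/3$. A quick evaluation shows that a single global choice $B=sb\le(44/5)(2/3)$ gives $c\approx0.14$, which is too weak: the resulting bound $1-8.8m+4.9m$ falls below $1$. So the proof must exploit that $b$ and $m$ move together. When the spectrum is close to $1$, both $b$ and $1-\lambda_{\min}$ are small. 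Then $c(sb)$ is close to $1/2$ and the variance constant is close to $2m/3$, and the bound becomes roughly $1-8.8m+38.7\cdot\frac23 m=1+17m$, far above $1+m/10$.

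Concretely, I would split according to $\beta=\lambda_{\max}-1$ and $\gamma=1-\lambda_{\min}$. In the first regime, $\beta$ and $\gamma$ lie below explicit thresholds, say $\beta\le\beta_0$ and $\gamma\le\gamma_0$. There, the displayed estimate with $c(s\beta_0)$ and constant $8(1-\gamma_0)$ gives
\[
\Hh_{44/5}\ge1+\Bigl(-\tfrac{44}{5}+c\bigl(\tfrac{44}{5}\beta_0\bigr)\bigl(\tfrac{44}{5}\bigr)^2\cdot\tfrac{2(1-\gamma_0)}{3}\Bigr)m+O(m^2),
\]
and the bracket should exceed $1/10$ with room to absorb the $m^2$ terms. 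Those terms carry the favourable sign, since the coefficient of $m^2$ is $c s^2(1-\tfrac13)>0$. In the complementary regime, some eigenvalue is bounded away from $1$, so $m\ge m_0>0$ by strict convexity of $x-1-\log x$. The statement then reduces to a strict inequality on a compact set of spectra. I would parametrize this set by three eigenvalues, with the fourth fixed by the product, and verify it by interval bounds in the manner of Appendix~\ref{sec:verification}. To shrink the search, I would first check the monotonicity of $\Hh_s$ in $V$ for fixed $m,\eta$. If $\Hh_s$ is increasing in $V$, the worst case in each cell is the least admissible $V$. At $(1,1,1,1)$ the inequality is the convention $\Hh=1$.

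The main obstacle is the intermediate regime. There $sb$ is of order $2$ to $4$, so the quadratic minorant loses most of its curvature. At the same time $m$ is still small, so the margin $m/10$ is tiny relative to rounding. I would first try to close this regime analytically by iterating the first case over a finite ladder of thresholds $\beta_0<\beta_1<\cdots$, each with its own $c(s\beta_k)$ and $\gamma$-dependent variance constant. I would fall back on certified interval evaluation of the exact expression $\Hh_{44/5}$ only on the cells where the ladder fails. Those cells are bounded away from $m=0$, so the margin is positive and numerically checkable.
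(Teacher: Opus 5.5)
Your outline is correct. Like the paper, it ends in a certified computation away from the identity, but the reductions along the way differ.

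The paper handles analytically only spectra in $[9/10,11/10]$, using Lemma~\ref{lem:smallband} at $s=22/5$ and then convexity of $s\mapsto\Hh_s$. It then fixes the maximum $u$ and proves $\partial_V\Hh_s>0$ at fixed $(m,u)$ via \eqref{eq:HV}. A Lagrange-multiplier argument follows: the multiplier equation is a nondegenerate quadratic in $\lambda_i$, since its leading coefficient is $\mathcal F_V/6>0$. This confines the constrained minimum to the three two-parameter surfaces \eqref{eq:highsurfaces}, on which $K_s\ge0$ is checked by interval arithmetic.

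You instead apply the sharper constant $c(sb)$ directly at $s=44/5$. You then certify a three-parameter set, using $V$-monotonicity only for pruning.

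Your analytic regime is larger than you think. Take $\gamma_0=1/3$, i.e.\ use only $\lambda_{\min}\ge2/3$, and $\beta_0=3/11$. Then $c(12/5)\approx0.2588$, and your bracket is $-8.8+34.42\,c(12/5)\approx0.107>1/10$. So every spectrum with $\lambda_{\max}\le14/11$ is settled by hand. For $\lambda_{\max}\ge14/11$ one has $m\ge\frac14\bigl(\frac{14}{11}+3(\frac{11}{14})^{1/3}-4\bigr)>0.01$.

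So your ``intermediate regime'' is not the real obstacle; the margins there are large. The inequality is tightest at $\lambda_{\max}=5/3$. For example, at $(5/3,2/3,2/3,27/20)$ one gets $\Hh_{44/5}-1-m/10\approx0.0097$, and that corner is where the certification needs care.

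The trade-off: your route shrinks the computational domain and avoids chaining through $s=22/5$. The paper's route makes the certification two-dimensional instead of three-dimensional. The two combine naturally: your local bound together with the paper's surfaces restricted to $u\in[14/11,5/3]$.

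One small correction. The minorant $e^{-u}\ge1-u+c(B)u^2$ for all $u\le B$ holds because $(e^{-u}-1+u)/u^2=\int_0^1(1-t)e^{-tu}\dd t$ is decreasing on all of $\R$. It does not follow from the difference vanishing to second order at $0$.
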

The proof of Lemma~\ref{lem:highband} is given in
Appendix~\ref{subsec:spectral-checks}.

To sharpen the root-shell bound for $\delta_v$, let $M=I+rC$,
$C\in\T$, and $0\le r\le11/25$. For
$w_i=\lambda_i(M)/(1+r/4)$, we have
\begin{equation}\label{eq:wrelax}
 \sum_iw_i=4,\qquad
 \frac{26}{37}\le w_i\le\frac{166}{111},\qquad
 \prod_iw_i\ge\frac{1557504}{1874161}.
\end{equation}
The eigenvalue bounds follow from Lemma~\ref{lem:tangent}.
By \eqref{eq:detD},
\[
 \prod_iw_i\ge\frac{(1-r/2)^2(1+r)^2}{(1+r/4)^4}.
\]
The right-hand side decreases on $[0,11/25]$ to
$1557504/1874161$, proving \eqref{eq:wrelax}.
Let
\begin{equation}\label{eq:rootstats}
 \begin{aligned}
 \mu&=(w_1w_2w_3w_4)^{-1/4},& m&=\mu-1,\\
 V&=\frac{\mu^2}{12}\sum_i(w_i-1)^2,&
 \eta&=\mu\bigl(\min(u,48/37)-1\bigr),\qquad u=\max_iw_i.
 \end{aligned}
\end{equation}
The normalized eigenvalues are $\mu w_i$. Applying
Lemma~\ref{lem:tangent} in the fixed root directions gives
\[
 \frac{v^TAv}{|v|^2}\le
 \mu\min\left(u,\frac{1+r}{1+r/4}\right)
 \le\mu\min(u,48/37).
\]
Thus the shell identities hold with $m,V$ from \eqref{eq:rootstats},
and $\delta_v\le m+\eta$ on the first shell.

\begin{lemma}[The root-shell moment estimate]
\label{lem:rootcert}
Let $w=(w_1,w_2,w_3,w_4)$ satisfy \eqref{eq:wrelax}, and define
$m,V,\eta$ by \eqref{eq:rootstats}. Then
\begin{equation}\label{eq:rootcert}
 \Hh_{22/5}(m,V,\eta)\ge1+\frac{m}{200}.
\end{equation}
At $w=(1,1,1,1)$, the left-hand side is defined by continuity to be $1$.
\end{lemma}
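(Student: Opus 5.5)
The plan is to reduce \eqref{eq:rootcert} to an elementary inequality between $m$, $V$ and $\eta$, in the same spirit as Lemma~\ref{lem:smallband}. Any part the crude reduction cannot reach is then closed by a finite interval-arithmetic certificate over the compact parameter set \eqref{eq:wrelax}, to be placed in the appendix alongside Lemma~\ref{lem:highband}.

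\textbf{Step 1 (rewrite with two nodes).} Write $s=22/5$, $z=m-V/\eta$ and $b=m+\eta=\mu\min(u,48/37)-1$, with $p=\eta^2/(\eta^2+V)$ and $q=V/(\eta^2+V)$. Then $\Hh_s=pe^{-sz}+qe^{-sb}$, and
\[
 pz+qb=m,\qquad pz^2+qb^2=V+m^2 .
\]
Note that $1874161=37^4$, so \eqref{eq:wrelax} gives $\mu\le 37/1557504^{1/4}\approx1.047$. Hence $b\le 1.047\cdot48/37-1<0.36$, and $sb\le B_0$ with $B_0\approx1.6$.

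\textbf{Step 2 (quadratic minorant near the identity).} Apply $e^{-u}\ge1-u+u^2/(2+B_0)$, valid for $u\le B_0$, at $u=sz$ and $u=sb$, and average with weights $p,q$. This gives
\[
 \Hh_s\ge1-sm+\frac{s^2}{2+B_0}(V+m^2).
\]
The lower bound on $V$ comes from a Taylor estimate. For $w\ge26/37$,
\[
 w-1-\log w\le\frac{(w-1)^2}{2\cdot 26/37}.
\]
Together with $\sum(w_i-1)=0$, this yields
\[
 4\log\mu\le\tfrac{37}{52}\sum_i(w_i-1)^2=\tfrac{37}{52}\cdot\frac{12V}{\mu^2},
 \qquad m\le\mu\log\mu .
\]
Consequently $V\ge c_0\,m$ with $c_0\approx0.47$. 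The target inequality needs roughly $V+m^2\ge0.81\,m$, so this uniform argument alone is \emph{not} enough, and I expect it to fail by about a factor two.

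The loss comes from using the worst-case $B_0$ and the worst-case $w_{\min}$ simultaneously. Near $w=(1,1,1,1)$ the situation is much better. There $m\approx\tfrac18\sum(w_i-1)^2$ and $V\approx\tfrac1{12}\sum(w_i-1)^2$, while both nodes are small, so the exponential is nearly its second-order Taylor polynomial. One then gets
\[
 \Hh_s\approx1+\Bigl(\tfrac{s^2}{24}-\tfrac{s}{8}\Bigr)\sum_i(w_i-1)^2\approx1+2m,
\]
which is far above $1+m/200$. So I will prove the bound analytically in a small neighbourhood $\max_i|w_i-1|\le\epsilon$, for a concrete $\epsilon$ such as $1/20$. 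In that neighbourhood $sb$ is small, so $B_0$ can be taken small and the Taylor constant $w_{\min}\ge1-\epsilon$ is nearly $1$, and the displayed chain closes with room to spare.

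\textbf{Step 3 (the rest of the box).} On the complement, $m$ is bounded below by a positive constant, so the inequality is strict with a uniform margin. Parametrize by $(w_1,w_2,w_3)$, with $w_4=4-w_1-w_2-w_3$, over the compact set cut out by \eqref{eq:wrelax}. Then verify $\Hh_s-1-m/200>0$ by interval subdivision. On each box, evaluate $\mu,V,\eta$ by monotone enclosures, treating the branch $\min(u,48/37)$ by splitting on $u\gtrless48/37$. Use that $\Hh_s(m,V,\eta)$ is increasing in $V$ at fixed $m$ (larger variance means more mass in the convex tail), which lets the enclosures be one-sided.

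The main obstacle is Step 2/3 bookkeeping: the exact formula degenerates as $V,\eta\to0$. I would handle this by using the analytic neighbourhood estimate there, so that the interval check never approaches $w=(1,1,1,1)$. I would also need to confirm that $\eta>0$ whenever $w\ne(1,1,1,1)$, which is required for Lemma~\ref{lem:two-moment} to apply. This holds because $\mu\min(u,48/37)>1$ when $u>1$, since $\mu>1$ and $48/37>1$.
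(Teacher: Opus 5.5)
Your plan is correct and is essentially the paper's proof. Near the identity the paper simply invokes Lemma~\ref{lem:smallband} on the region where all $\mu w_i\in[9/10,11/10]$, which contains your cube $\max_i|w_i-1|\le 1/20$, so your re-derived quadratic-minorant estimate is a special case; on the remaining compact set it certifies the cleared-denominator form \eqref{eq:Kspectral} (with $s=22/5$ and $m/200$) by rational subdivision and mean-value bounds, splitting the branch $u\gtrless 48/37$ as you propose and ordering the three non-maximal $w_i$ to exploit symmetry.

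One small fix: $\eta=\mu(\min(u,48/37)-1)>0$ follows directly from $\min(u,48/37)>1$. The inequality $\mu\min(u,48/37)>1$ that you cite only gives $b>0$, not $b>m$.
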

For $w\ne(1,1,1,1)$, one has $m,V,\eta>0$. As $q=u-1\to0$,
the constraints give $w_i-1=O(q)$, $m,V=O(q^2)$, and $\eta\sim q$.
Thus both nodes in \eqref{eq:HermiteH} tend to zero and $\Hh_s\to1$,
since their nonnegative weights sum to one.
The quantitative estimate is proved in Appendix~\ref{subsec:root-checks}.

\begin{proof}[Proof of Theorem~\ref{prop:Dcap}]
Write $r=h_P(R)$. The normalization \eqref{eq:Anormalization}
and $a(R)=a(P)=1$ give
\[
 A=I_4\quad\Longleftrightarrow\quad R=P
 \quad\Longleftrightarrow\quad r=0.
\]
If $r=0$, both assertions follow. Henceforth assume $r>0$, so
$A\ne I_4$ and $m,V>0$. Since $Q$ represents $A^{1/2}\D$,
\[
 \Theta(\alpha,Q)=\sum_{v\in\D}e^{-\pi\alpha v^TAv}.
\]
For $0\le r\le11/25$, one has $(1-r/2)^2(1+r)^2\ge1$.
The determinant bounds in \eqref{eq:detD} therefore give
\[
 \lambda_{\min}(A)\ge\frac{1-r/2}{1+r/4}\ge\frac{26}{37}>\frac23,
 \qquad \lambda_{\max}(A)\le1+3r/2\le\frac{83}{50}<\frac53.
\]
For fixed $m,V,\eta$, $s\mapsto\Hh_s$ is convex with $\Hh_0=1$.
Thus $(\Hh_s-1)/s$ is nondecreasing for $s>0$, giving
$\Hh_s-1\ge(s/s_0)(\Hh_{s_0}-1)$ for $s\ge s_0>0$.
On the first shell, use $\eta$ from \eqref{eq:rootstats} and
$s_0=22/5$. Lemmas~\ref{lem:two-moment} and~\ref{lem:rootcert}
give a relative excess of at least $sm/880$, where
$s=\pi\sqrt2\alpha>22/5$. On every higher shell, use
$\eta=\lambda_{\max}(A)-\overline\lambda$ and $s_0=44/5$.
Since $s=\pi\alpha\rho\ge2\pi\sqrt2\alpha>44/5$,
Lemmas~\ref{lem:two-moment} and~\ref{lem:highband} give a
nonnegative shell difference. Absolute convergence permits
summation over all shells; the first shell alone gives
\[
 \Theta(\alpha,Q)-\Theta(\alpha,\D)
 \ge24e^{-\pi\sqrt2\alpha}\frac{\pi\sqrt2\alpha m}{880}>0.
\]
For the quantitative bound, each eigenvalue $h$ of $\log A$ satisfies
\[
 e^h-1-h
 =h^2\int_0^1(1-u)e^{uh}\,\dd u\ge\frac{h^2}{3},
\]
since $e^{uh}\ge2/3$ for $0\le u\le1$.
Summing and using $\tr\log A=0$ yields
$m\ge\|\log A\|_F^2/12$. Substitution in the preceding theta bound
gives \eqref{eq:capcoercivity}, and strictness for $r>0$ gives the
equality statement.
\end{proof}

For the remaining comparisons and the global argument, we record
the Fourier identities, compactness statement, and reference estimates.
We use the Fourier convention
\[
 \widehat f(\xi)=\int_{\R^n}f(x)e^{-2\pi i x\cdot\xi}\dd x.
\]
Poisson summation and the Gaussian Fourier transform give
\[
 \sum_{v\in\Lambda}f(v)=\frac1{\covol(\Lambda)}
 \sum_{\xi\in\Lambda^*}\widehat f(\xi),\qquad
 \widehat{e^{-\pi\alpha|\cdot|^2}}(\xi)
 =\alpha^{-n/2}e^{-\pi|\xi|^2/\alpha}
\]
for Schwartz functions $f$. In particular,
\begin{equation}\label{eq:poisson}
 \Theta(\alpha,L)=\alpha^{-2}\Theta(1/\alpha,L^*)
 \qquad(\covol L=1).
\end{equation}
We also use Poisson summation in dimensions one and three.

\begin{lemma}[Isoduality and reciprocal scales]
\label{lem:isodual}
The matrix
\begin{equation}\label{eq:J}
 J=\frac1{\sqrt2}
 \begin{pmatrix}
 1&1&0&0\\1&-1&0&0\\0&0&1&1\\0&0&1&-1
 \end{pmatrix}
\end{equation}
is an orthogonal involution and satisfies
\begin{equation}\label{eq:normalizeddual}
 \D^*=2^{1/4}D_4^*=J\D.
\end{equation}
Consequently, for every covolume-one lattice $L\subset\R^4$ and every
$\alpha>0$,
\begin{equation}\label{eq:defectduality}
 \Phi(\alpha,L)=\Phi(1/\alpha,L^*).
\end{equation}
\end{lemma}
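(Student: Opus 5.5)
We verify the three assertions in turn: that $J$ is an orthogonal involution, that $J\D=\D^*$, and then the defect duality \eqref{eq:defectduality} via Poisson summation \eqref{eq:poisson}.

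\emph{$J$ is an orthogonal involution.} $J$ is block diagonal with two copies of $\frac1{\sqrt2}\begin{pmatrix}1&1\\1&-1\end{pmatrix}$. This block is symmetric and squares to the identity. Hence $J=J^T=J^{-1}$, so $J$ is orthogonal and an involution.

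\emph{The identity $J\D=\D^*$.} Since $\D=2^{-1/4}D_4$ and $\D^*=2^{1/4}D_4^*$, the claim \eqref{eq:normalizeddual} is equivalent to
\[
 \sqrt2\,J\,D_4^*=D_4, \qquad\text{i.e.}\qquad H D_4^*=D_4,
\]
where $H=\sqrt2J=\begin{pmatrix}1&1\\1&-1\end{pmatrix}\oplus\begin{pmatrix}1&1\\1&-1\end{pmatrix}$ is an integer matrix. It suffices to show $HD_4^*\subseteq D_4$ and then compare covolumes.

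For the inclusion we use the model \eqref{eq:D-dual-model}, $D_4^*=\Z^4\cup(\Z^4+\frac12\boldsymbol1)$. For $m\in\Z^4$,
\[
 Hm=(m_1+m_2,\;m_1-m_2,\;m_3+m_4,\;m_3-m_4).
\]
This is integral with coordinate sum $2m_1+2m_3$, which is even, so $Hm\in D_4$. For the half-integer coset, $H(\frac12\boldsymbol1)=(1,0,1,0)$, which has even coordinate sum and so lies in $D_4$. Thus $HD_4^*\subseteq D_4$.

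For equality, note $|\det H|=4$ and $\covol D_4^*=1/2$, so $\covol(HD_4^*)=2=\covol D_4$. A sublattice of equal covolume is the whole lattice, so $HD_4^*=D_4$.

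\emph{Defect duality.} Let $L$ have covolume one. Apply \eqref{eq:poisson} to $L$, and also to $\D$ (which has covolume one), and subtract:
\[
 \Theta(\alpha,L)-\Theta(\alpha,\D)
 =\alpha^{-2}\bigl(\Theta(1/\alpha,L^*)-\Theta(1/\alpha,\D^*)\bigr).
\]
Theta is invariant under orthogonal maps, so $\Theta(1/\alpha,\D^*)=\Theta(1/\alpha,J\D)=\Theta(1/\alpha,\D)$. Multiplying both sides by $\alpha$ gives
\[
 \Phi(\alpha,L)=\alpha^{-1}\bigl(\Theta(1/\alpha,L^*)-\Theta(1/\alpha,\D)\bigr)=\Phi(1/\alpha,L^*),
\]
since $L^*$ also has covolume one. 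This is \eqref{eq:defectduality}.

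None of these steps is substantive; the only point needing care is getting the scaling factors right when converting \eqref{eq:normalizeddual} to the integral identity $HD_4^*=D_4$.
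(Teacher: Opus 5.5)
Your proof is correct and follows the paper's route: an inclusion checked against the explicit model $D_4^*=\Z^4\cup(\Z^4+\frac12\boldsymbol1)$, equality by comparing covolumes, and then subtracting \eqref{eq:poisson} for $L$ and $\D$ and using orthogonal invariance of theta. The only difference is cosmetic: you verify $\sqrt2\,JD_4^*\subseteq D_4$ on the two cosets, whereas the paper checks $JD_4\subseteq\sqrt2\,D_4^*$ by a parity argument; the two are interchangeable because $J$ is an involution and the covolumes agree.
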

\begin{proof}
By \eqref{eq:D-dual-model}, parity gives
$JD_4\subset\sqrt2D_4^*$. The two lattices have covolume $2$, so
the inclusion is an equality. Since $J^TJ=J^2=I_4$, the matrix $J$
is an orthogonal involution.
Rescaling gives $J\D=2^{1/4}D_4^*=\D^*$.
Subtract \eqref{eq:poisson} for $L$ and $\D$ and multiply by $\alpha$
to obtain \eqref{eq:defectduality}.
\end{proof}

\begin{lemma}[Compact theta sublevels and smooth dependence]
\label{lem:sublevels}
Fix $0<b<B<\infty$ and $C>0$. Among covolume-one lattices, the condition
\[
 \Theta(\alpha,L)\le C\quad\text{for some }\alpha\in[b,B]
\]
implies a uniform positive lower bound for $a(L)$, and the corresponding
reduced Gram matrices lie in a compact subset of $\Sym_4^+$.
On compact subsets of $\Sym_4^+\times(0,\infty)$, the theta series
and its termwise derivatives of every fixed order in the Gram entries
and scale converge uniformly. The function $a(Q)$ is continuous
on $\Sym_4^+$.
\end{lemma}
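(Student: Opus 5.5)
The plan is to derive everything from two elementary estimates. The first is a lower bound for theta coming from a short vector. The second is the compactness of reduced forms in Lemma~\ref{lem:compact}.

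For the lower bound on $a(L)$, let $v$ be a shortest nonzero vector of $L$, so $|v|^2=a(L)$. Consider the rank-one sublattice $\Z v$. Every term of the theta series is positive, so discarding all lattice points off $\Z v$ gives
\[
 \Theta(\alpha,L)\ge\sum_{k\in\Z}e^{-\pi\alpha k^2a(L)}.
\]
One-dimensional Poisson summation rewrites the right side as $(\alpha a(L))^{-1/2}\sum_{k\in\Z}e^{-\pi k^2/(\alpha a(L))}$, which is at least $(\alpha a(L))^{-1/2}$. Hence $\Theta(\alpha,L)\le C$ with $\alpha\le B$ forces $a(L)\ge 1/(BC^2)$. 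The lower bound $b$ on the scale is not needed for this step; it only keeps the parameter set compact later on.

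For compactness, choose a Minkowski-reduced Gram matrix $Q$ of $L$. Then $\det Q=1$ and $q_{11}=a(Q)\ge 1/(BC^2)$. Lemma~\ref{lem:compact}, with $\varepsilon=1/(BC^2)$ and $D_0=1$, places $Q$ in a fixed compact subset of $\Sym_4^+$.

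For uniform convergence, let $K\subset\Sym_4^+\times(0,\infty)$ be compact. There are constants $c>0$ and $\beta_0,\beta_1>0$ with $Q\succeq cI$ and $\beta_0\le\alpha\le\beta_1$ throughout $K$, so $\alpha Q[m]\ge \beta_0 c|m|^2$. A derivative of order $k$ of $e^{-\pi\alpha Q[m]}$ in the Gram entries and the scale is a polynomial in the $m_im_j$ and in $Q[m]$, of degree at most $2k$ in $|m|$ with coefficients bounded on $K$, multiplied by the exponential. Each such derivative is therefore bounded by $C_k(1+|m|)^{2k}e^{-\pi\beta_0c|m|^2}$. This bound is summable over $\Z^4$, and the Weierstrass M-test gives uniform convergence of every termwise derivative series. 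Consequently theta is $C^\infty$ on $\Sym_4^+\times(0,\infty)$, and its derivatives are obtained termwise.

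For continuity of $a(Q)=\min_{m\ne0}Q[m]$, fix $Q_0$ and a neighbourhood on which $Q\succeq cI$. On this neighbourhood only the finitely many $m$ with $c|m|^2\le 2a(Q_0)$ can realise the minimum. Since $a(Q)\le Q[e_1]$ is locally bounded, the neighbourhood can be shrunk so that this finite set suffices. A minimum of finitely many continuous functions is continuous, so $a$ is continuous at $Q_0$.

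The only step needing care is the short-vector lower bound, and Poisson summation in one dimension settles it. The rest is routine bookkeeping of uniform constants on the compact set.
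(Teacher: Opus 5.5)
Your proposal is correct and follows the paper's proof essentially step for step: the rank-one sublattice $\Z v$ with one-dimensional Poisson summation gives $a(L)\ge 1/(BC^2)$, Lemma~\ref{lem:compact} then gives compactness, a summable Gaussian majorant of the form $C(1+|m|)^N e^{-\pi b\lambda|m|^2}$ gives uniform convergence of all termwise derivatives, and continuity of $a$ comes from writing it locally as a minimum over finitely many $m$. One small slip occurs in the continuity step: the bound $a(Q)\le 2a(Q_0)$ that defines your finite set $\{c|m|^2\le 2a(Q_0)\}$ comes from continuity of $Q\mapsto Q[m_0]$ at a minimizer $m_0$ of $Q_0$, whereas the local bound $a(Q)\le Q[e_1]\le\Lambda$ that you cite gives the set $\{c|m|^2\le\Lambda\}$ instead (the one the paper uses), and either choice works.
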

\begin{proof}
For a shortest vector $v$ with $|v|^2=a$, summing over $\Z v$ gives
\[
 \Theta(\alpha,L)\ge\sum_{k\in\Z}e^{-\pi\alpha a k^2}
 =\frac1{\sqrt{\alpha a}}
   \sum_{k\in\Z}e^{-\pi k^2/(\alpha a)}
 \ge\frac1{\sqrt{\alpha a}}.
\]
Thus $a\ge1/(BC^2)$, and Lemma~\ref{lem:compact} gives compactness.

On a compact subset of $\Sym_4^+\times(0,\infty)$, choose
$0<\lambda\le\Lambda$ and $0<b<B$ with
$\lambda I\preceq Q\preceq\Lambda I$ and $b\le\alpha\le B$.
Termwise derivatives of any fixed order have a summable majorant
\[
 C(1+|m|)^N e^{-\pi b\lambda|m|^2},
\]
uniformly in $Q,\alpha$. This justifies uniform convergence and
termwise differentiation, including on the determinant-one constraint.

Since $a(Q)\le\Lambda$ and $Q[m]\ge\lambda|m|^2$,
\[
 a(Q)=\min_{\substack{m\in\Z^4\setminus\{0\}\\
                         |m|^2\le\Lambda/\lambda}}Q[m],
\]
a finite minimum of continuous functions; hence $a(Q)$ is continuous.
\end{proof}

The four-square formula gives the reference expansion~\cite{CS}
\begin{equation}\label{eq:Dseries}
 \Theta(\alpha,\D)=1+24\sum_{n\ge1}\sodd(n)e^{-\pi\sqrt2\alpha n},
 \qquad \sodd(n)=\sum_{\substack{k\mid n\\k\text{ odd}}}k.
\end{equation}
An integer vector belongs to $D_4$ exactly when its squared norm is even. The
four-square formula $8\sum_{k\mid2n,\,4\nmid k}k=24\sodd(n)$
therefore gives \eqref{eq:Dseries} after normalization.
The bound $\sodd(n)\le n^2$ controls the tails.

We use the Gaussian moment
\begin{equation}\label{eq:basicnotation}
 M(t,L)=\sum_{v\in L}|v|^2e^{-t|v|^2},\qquad t>0.
\end{equation}

\begin{lemma}[Reference energy and moment bounds]
\label{lem:reference}
For every $\alpha\ge1$, the reference lattice satisfies
\begin{equation}\label{eq:refbound}
 e^{\pi\sqrt2\alpha}\bigl(\Theta(\alpha,\D)-1\bigr)<\frac{243}{10}.
\end{equation}
For every $\alpha\ge5$, it also satisfies
\begin{equation}\label{eq:MD34}
 e^{\pi\sqrt2\alpha}M(\pi\alpha,\D)<34.
\end{equation}
\end{lemma}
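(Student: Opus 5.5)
Both bounds follow from the shell expansion of $\D$ in \eqref{eq:Dseries}, the crude estimate $\sodd(n)\le n^2$, and an explicit summation of the tail. Write $q=e^{-\pi\sqrt2\alpha}$. For $\alpha\ge1$ we have $q\le q_1:=e^{-\pi\sqrt2}\approx 0.01176$.

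For \eqref{eq:refbound}, \eqref{eq:Dseries} gives
\[
 e^{\pi\sqrt2\alpha}\bigl(\Theta(\alpha,\D)-1\bigr)
 =24\sum_{n\ge1}\sodd(n)q^{n-1}.
\]
The right-hand side is increasing in $q$, so it suffices to evaluate it at $q=q_1$. We keep the first few terms exactly: $\sodd(1)=1$, $\sodd(2)=1$, $\sodd(3)=4$, $\sodd(4)=1$. This gives $24(1+q_1+4q_1^2+q_1^3+\cdots)$. The tail $\sum_{n\ge5}n^2q_1^{n-1}$ we bound by the closed form for $\sum n^2x^{n-1}$ minus its initial terms, or simply by a geometric majorant. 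Since $q_1<0.0118$, the result is about $24(1+0.0118+0.00056+\cdots)\approx 24.30$. This is below $24.3=243/10$, but only barely, so the constant must be checked carefully. I would bound $q_1$ from above rigorously using $\pi\sqrt2>4.4428$, so that $q_1<e^{-4.4428}<0.011757$. Then
\[
 24\bigl(1+q_1+4q_1^2+q_1^3+25q_1^4/(1-2q_1)\bigr)
 <24(1.011757+0.000553+0.0000017+\cdots)<24.2988<24.3 .
\]
The main obstacle is this margin: the slack is about $0.001$, so I need the explicit numerical bound on $e^{-\pi\sqrt2}$, and I must keep the exact value $\sodd(2)=1$ rather than the crude bound $4$.

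For \eqref{eq:MD34}, shell $n$ of $\D$ consists of vectors of squared length $\sqrt2 n$. Hence
\[
 e^{\pi\sqrt2\alpha}M(\pi\alpha,\D)=24\sqrt2\sum_{n\ge1}n\,\sodd(n)q^{n-1}.
\]
This is again increasing in $q$. For $\alpha\ge5$ we have $q\le e^{-5\pi\sqrt2}<3\cdot10^{-10}$. The first term is $24\sqrt2\approx 33.94<34$, and the rest is bounded by $24\sqrt2\sum_{n\ge2}n^3q^{n-1}$, which is less than $10^{-7}$. The margin here is about $0.06$, so the estimate is routine. Both claims then follow from monotonicity in $q$ and these explicit evaluations.
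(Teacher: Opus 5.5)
Your argument is essentially the paper's proof: you use the shell expansion \eqref{eq:Dseries}, monotonicity in $q$, exact low shells, and a geometric majorant from $\sodd(n)\le n^2$. The paper differs only in bookkeeping: it keeps six shells and uses $n^2\le49\cdot2^{n-7}$ for $n\ge7$, and for \eqref{eq:MD34} it uses $\sum_n n^3q^{n-1}=(1+4q+q^2)/(1-q)^4$ with $q<10^{-4}$. Your four-shell version and your splitting off of the first moment term work equally well.

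One numerical claim is wrong, though. In fact $e^{-4.4428}\approx0.011763$, and $q_1=e^{-\pi\sqrt2}\approx0.011762>0.011757$. Use instead the paper's bound $q_1<1/85$, which holds since $\ln85<4.4427<\pi\sqrt2$. This gives $24\bigl(1+q_1+4q_1^2+q_1^3+25q_1^4/(1-2q_1)\bigr)<24.296$, so your conclusion stands. The true slack is about $0.004$ rather than $0.001$.
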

See Appendix~\ref{subsec:reference-checks} for the proof.

\begin{proposition}[Comparison near $A_4$]
\label{prop:Acap}
Let $Q\in\Sym_4^+$ have determinant one and satisfy
$h_P(Q/a(Q))\le1/10$ for some $P\in\Acl$. Then
\begin{equation}\label{eq:Acap}
 \Theta(\alpha,Q)-\Theta(\alpha,\D)
 >\frac1{10}e^{-\pi\sqrt2\alpha}\qquad(\alpha\ge1).
\end{equation}
\end{proposition}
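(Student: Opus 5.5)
The plan is to bound $\Theta(\alpha,Q)$ from below by the Gaussian contributions of the first two shells of the nearby $A_4$ form, and to compare these with the reference bound \eqref{eq:refbound}.

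\emph{Step 1: localize the shortest length.} After an integral change of basis, assume $P=P_A$, and write $R=Q/a(Q)$, $s=h_P(R)\le1/10$ and $a=a(Q)=(\det R)^{-1/4}$. Lemma~\ref{lem:Atangent} gives
\[
\det R\ge\tfrac5{16}(1-3s/2)(1+5s/2).
\]
The arithmetic--geometric mean inequality applied to $\tr(P^{-1}R)=4+s$ gives
\[
\det R\le\tfrac5{16}(1+s/4)^4.
\]
Together these confine $a$ to a narrow interval around $(16/5)^{1/4}\approx1.3375$. For $s\le1/10$, a crude numerical estimate gives roughly $a\in[1.30,1.32]$.

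\emph{Step 2: shell averaging at $A_4$.} Every shell of $A_4$ is invariant under $S_5\times\{\pm1\}$. This group acts irreducibly on $H$, so each shell is a spherical $2$-design. Hence, for each shell $S$ of $P$ with $P[m]=\rho$,
\[
\frac1{|S|}\sum_{m\in S}\frac{R[m]}{\rho}=\frac{\tr(P^{-1}R)}4=1+\frac s4.
\]
For the first shell this is the same as \eqref{eq:Atrace}: the $20$ roots have average $R$-value $1+s/4$. Jensen's inequality for $x\mapsto e^{-\pi\alpha a\rho x}$ then gives, for each shell,
\[
\sum_{m\in S}e^{-\pi\alpha Q[m]}\ge|S|\,e^{-\pi\alpha a\rho(1+s/4)}.
\]
I would retain the first shell ($20$ vectors, $\rho=1$) and the second shell ($30$ vectors, $\rho=3/2$ in the minimum-one normalization).

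\emph{Step 3: compare with $\D$.} By \eqref{eq:refbound}, $\Theta(\alpha,\D)-1<\frac{243}{10}e^{-\pi\sqrt2\alpha}$. Set $c=\max_{s\le1/10}a(s)(1+s/4)$, using the determinant bounds of Step~1; then $c\approx1.35<\sqrt2$. It then suffices to show that
\[
20e^{\pi\alpha(\sqrt2-c)}+30e^{\pi\alpha(\sqrt2-3c/2)}>\tfrac{243}{10}+\tfrac1{10}
\]
for all $\alpha\ge1$. I would check this at $\alpha=1$ and extend it to larger $\alpha$ by monotonicity of the dominant first term, since $\sqrt2-c>0$.

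\emph{Main obstacle.} The first shell alone is essentially borderline at $\alpha=1$: $20e^{\pi(0.063)}\approx24.4$, which barely exceeds $24.4$. So the sharp value of $c$ matters, and that is why the second shell is kept. That shell contributes about $30e^{-2.03\pi}$, roughly four times $e^{-\pi\sqrt2}$ at $\alpha=1$, which gives ample slack there. For large $\alpha$ the second term decays, but the first term alone then suffices because $\sqrt2-c>0$.

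The remaining delicate point is verifying $c<\sqrt2$ with enough margin, together with the left-hand inequality over all of $\alpha\ge1$. Here I would maximize $a(s)(1+s/4)$ explicitly over $[0,1/10]$ using the lower determinant bound; this is a one-variable calculus check. If that margin turned out to be insufficient, the fallback would be the two-moment Lemma~\ref{lem:two-moment} in place of Jensen on the first shell.
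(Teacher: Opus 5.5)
Your core argument is the paper's proof. After passing to $P=P_A$, you apply Jensen to the twenty roots, whose $R$-average is $\tr(P_A^{-1}R)/4=1+s/4$. You then bound $a(1+s/4)$ jointly in $s$ via the determinant lower bound of Lemma~\ref{lem:Atangent}, and compare with \eqref{eq:refbound}. The paper keeps only this first shell, using $a(1+s/4)<13503/10000$ for $0\le s\le1/10$.

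The second-shell step you add is wrong as stated. Since $A_4$ is an even lattice, the squared lengths for $P_A$ are the integers $1,2,3,\dots$. The $30$ second-shell vectors (permutations of $(1,1,-1,-1,0)$ in the $\Z^5$ model) therefore have $\rho=2$, not $\rho=3/2$. Your bound $\sum_{m\in S_2}e^{-\pi\alpha Q[m]}\ge30e^{-\frac32\pi\alpha c}$ is false; it already fails at $Q=(16/5)^{1/4}P_A$. The ``roughly four times $e^{-\pi\sqrt2}$'' of slack is illusory: the corrected contribution at $\alpha=1$ is $30e^{\pi(\sqrt2-2c)}\approx0.53$ in units of $e^{-\pi\sqrt2}$.

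The proof survives because the first shell alone suffices. The function $2(1+s/4)5^{-1/4}[(1-3s/2)(1+5s/2)]^{-1/4}$ is increasing on $[0,1/10]$, so $c\approx1.35030$ is attained at $s=1/10$, and $20e^{\pi(\sqrt2-c)}\approx24.45>24.4$. This term increases with $\alpha$, so your extension to $\alpha\ge1$ by monotonicity of the first term is legitimate precisely because that term clears the threshold on its own.

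Your Step~1 range is also off. At $s=0$ one has $a=(16/5)^{1/4}\approx1.3375$, so $a$ ranges over roughly $[1.305,1.3375]$, not $[1.30,1.32]$. This is why the maximization must be joint in $s$, as you propose: the decoupled bound $\max a\cdot\max(1+s/4)\approx1.371$ gives only $20e^{\pi(\sqrt2-1.371)}\approx22.9<24.4$.
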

\begin{proof}
After an integral change of basis, we may assume $P=P_A$. Write
$a=a(Q)$, $R=Q/a$, and $s=h_{P_A}(R)$. The twenty distinct integer
vectors
\[
 \mathcal V_A=\{\pm e_i:1\le i\le4\}
       \cup\{\pm(e_i-e_j):1\le i<j\le4\}
\]
satisfy
\[
 \sum_{v\in\mathcal V_A}vv^T
 =10I_4-2\boldsymbol1\boldsymbol1^T=5P_A^{-1}.
\]
Their average squared length in $Q$ is
\[
 \frac1{20}\sum_{v\in\mathcal V_A}Q[v]
 =\frac14\tr(P_A^{-1}Q)=a(1+s/4).
\]
Since $a=(\det R)^{-1/4}$, Lemma~\ref{lem:Atangent} gives
\[
 a(1+s/4)\le
 \frac{2(1+s/4)}{5^{1/4}[(1-3s/2)(1+5s/2)]^{1/4}}
 <\frac{13503}{10000},\qquad 0\le s\le\frac1{10}.
\]
Jensen's inequality therefore gives
\[
 \Theta(\alpha,Q)>1+20e^{-\pi\alpha(13503/10000)}.
\]
Comparison with \eqref{eq:refbound} yields \eqref{eq:Acap} for every
$\alpha\ge1$.
\end{proof}

Determinant separation also gives a length bound outside the two
trace neighborhoods.
\begin{corollary}[Length bound away from the perfect forms]
\label{cor:abound}
Let $Q\in\Sym_4^+$ have determinant one, and set $R=Q/a(Q)$.
If
\[
 h_P(R)\ge\frac{11}{25}\quad(P\in\DD),\qquad
 h_P(R)\ge\frac1{10}\quad(P\in\Acl),
\]
then
\begin{equation}\label{eq:amax}
 a(Q)\le\frac{25}{\sqrt{351}}.
\end{equation}
\end{corollary}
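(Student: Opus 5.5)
Corollary~\ref{cor:abound} follows directly from Theorem~\ref{prop:barrier} with $r=11/25$ and $s=1/10$, combined with the normalization $\det R=a(Q)^{-4}$. First, since $a(R)=a(Q)/a(Q)=1$, we have $R\in\Ry$. Also $r=11/25\in(0,2)$ and $s=1/10\in(0,2/3)$, so the hypotheses of Theorem~\ref{prop:barrier} are exactly the two trace inequalities assumed here. Hence $\det R$ is at least the minimum in \eqref{eq:barrier}.

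Next we evaluate the two candidates. For the $D_4$ term, $1-r/2=39/50$ and $1+r=36/25$, so
\[
 \frac{(1-r/2)^2(1+r)^2}{4}=\frac14\left(\frac{39\cdot36}{50\cdot25}\right)^2
 =\frac14\left(\frac{1404}{1250}\right)^2=\left(\frac{702}{1250}\right)^2=\left(\frac{351}{625}\right)^2=\frac{351^2}{25^4}.
\]
For the $A_4$ term, $(1-3s/2)(1+5s/2)=(17/20)(5/4)=17/16$, so the bound is $\frac5{16}\cdot\frac{17}{16}=85/256\approx0.332$. The first value is $351^2/390625\approx0.3154$, which is smaller. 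The minimum is therefore $351^2/25^4$, and we obtain $\det R\ge 351^2/25^4$.

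Finally, $\det Q=1$ gives $\det R=a(Q)^{-4}$. Thus $a(Q)^4\le 25^4/351^2$, and taking positive fourth roots yields $a(Q)\le 25/\sqrt{351}$, which is \eqref{eq:amax}.

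There is no substantial obstacle. The only steps needing care are confirming that $R$ lies in the Ryshkov domain and checking which of the two expressions in \eqref{eq:barrier} is the smaller one. The latter is a one-line numerical comparison: $0.3154<0.332$.
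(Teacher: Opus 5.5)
Your proposal is correct and follows the paper's proof exactly: apply Theorem~\ref{prop:barrier} with $r=11/25$ and $s=1/10$ to get $\det R\ge 351^2/25^4$, then use $\det R=a(Q)^{-4}$. Your explicit evaluation of the two candidate bounds, $351^2/25^4\approx0.3154<85/256$, and your check that $R\in\Ry$ are details the paper leaves implicit, and both are right.
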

\begin{proof}
Theorem~\ref{prop:barrier} with $r=11/25$, $s=1/10$ gives
$\det R\ge351^2/25^4$. Since $\det R=a(Q)^{-4}$, this is
\eqref{eq:amax}.
\end{proof}

For the short-vector comparison, we use the normalized four-dimensional
Laguerre basis
\[
 \ell_k^{(4)}(u)=\frac{L_k^1(u)}{k+1}
 =\sum_{j=0}^k\frac{(-1)^j\binom{k+1}{k-j}}{j!(k+1)}u^j.
\]
If $P(u)=\sum b_k\ell_k^{(4)}(u)$ and
$P^\sharp(u)=\sum(-1)^kb_k\ell_k^{(4)}(u)$, then
\begin{equation}\label{eq:Lag4}
 \widehat{\bigl[e^{-\pi\alpha|x|^2}P(2\pi\alpha|x|^2)\bigr]}(y)
 =\alpha^{-2}e^{-\pi|y|^2/\alpha}P^\sharp(2\pi|y|^2/\alpha).
\end{equation}
The Gaussian transform and Laguerre generating function give
$e^{-\pi|x|^2}L_k^{n/2-1}(2\pi|x|^2)$ the Fourier eigenvalue $(-1)^k$
in dimension $n$. Normalization and dilation yield \eqref{eq:Lag4}
and, for $n=3$, \eqref{eq:FT3}.

\begin{proposition}[Exclusion by a short vector]
\label{prop:cusp}
For every covolume-one lattice $L\subset\R^4$,
\begin{equation}\label{eq:cusp}
 a(L)\le\frac{21}{20}\quad\Longrightarrow\quad
 \Theta(\alpha,L)>\Theta(\alpha,\D)\qquad(\alpha\ge1).
\end{equation}
Under the same short-vector hypothesis, the stronger estimate
\[
 \Theta(\alpha,L)-\Theta(\alpha,\D)>\frac1{4000}
 \qquad\left(1\le\alpha\le\frac94\right)
\]
holds.
\end{proposition}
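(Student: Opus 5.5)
Fix a shortest vector $v\in L$ with $|v|^2=a=a(L)\le 21/20$. We split the scale range at a moderate value, say $\alpha_0=9/4$ or somewhat larger, and use two different mechanisms.

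\emph{Large scales.} Retaining only $\pm v$ gives
\[
 \Theta(\alpha,L)\ge 1+2e^{-\pi\alpha a}\ge 1+2e^{-21\pi\alpha/20}.
\]
By \eqref{eq:refbound}, $\Theta(\alpha,\D)<1+\tfrac{243}{10}e^{-\pi\sqrt2\alpha}$. Strict comparison therefore holds as soon as
\[
 e^{\pi\alpha(\sqrt2-21/20)}>\tfrac{243}{20},
\]
that is, for $\alpha\gtrsim 2.2$. To get a clean overlap with the small-scale range, we will use this bound for $\alpha\ge 9/4$ and check the crossover numerically, adding the next shells $\pm2v$ if the margin is tight.

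\emph{Scales $1\le\alpha\le 9/4$.} Here a single pair is too weak, so we exploit the whole line $\Z v$ and its translates. Write $L$ as a union of cosets of $\Z v$ over the projected lattice $\Gamma=\pi_{v^\perp}(L)\subset\R^3$, which has covolume $a^{-1/2}$. Each fibre sum is a one-dimensional theta function. By one-dimensional Poisson summation it equals
\[
 (\alpha a)^{-1/2}e^{-\pi\alpha|y|^2}\sum_k e^{-\pi k^2/(\alpha a)}e^{2\pi i k t}
 \ge (\alpha a)^{-1/2}e^{-\pi\alpha|y|^2}\bigl(1-2\textstyle\sum_{k\ge1}e^{-\pi k^2/(\alpha a)}\bigr).
\]
This yields
\[
 \Theta(\alpha,L)\ge (\alpha a)^{-1/2}\,c(\alpha a)\sum_{y\in\Gamma}e^{-\pi\alpha|y|^2},
\]
where the $y=0$ fibre is treated exactly. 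We then need a lower bound for the three-dimensional Gaussian sum over $\Gamma$, uniform in the unknown shape of $\Gamma$. For this we use a Cohn--Elkies-type Fourier minorant in dimension three: choose $g(y)=e^{-\pi\beta|y|^2}P(2\pi\beta|y|^2)$, with $P$ a low-degree polynomial in the Laguerre basis of \eqref{eq:FT3}, such that $g\le e^{-\pi\alpha|\cdot|^2}$ and $\widehat g\ge0$. Poisson summation then gives
\[
 \sum_{\Gamma}g\ge a^{1/2}\widehat g(0),
\]
a bound with no dependence on the shape of $\Gamma$. Combining these, we obtain an explicit function $F(\alpha,a)$ with $\Theta(\alpha,L)\ge F(\alpha,a)$. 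Monotonicity in $a$ (smaller $a$ only helps, since the line contributes $\sim(\alpha a)^{-1/2}$) reduces the problem to $a=21/20$. It then remains to verify
\[
 F(\alpha,21/20)-\Theta(\alpha,\D)>1/4000
\]
on $[1,9/4]$. We do this by interval arithmetic on a finite grid, with derivative bounds and the tail bound $\sodd(n)\le n^2$ for the reference series \eqref{eq:Dseries}. This also delivers the quantitative statement; for $\alpha>9/4$ the large-scale bound gives strictness.

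\emph{Main obstacle.} The hard step is the choice of minorant on $1\le\alpha\le 9/4$. At $\alpha=1$ the margin is small, because $\Theta(1,\D)$ is only modestly above values attainable with $a$ near $21/20$. A pure Gaussian minorant, i.e.\ $P$ constant, is likely insufficient. We would first try a quadratic $P$, with parameters $\beta$ and coefficients depending piecewise on $\alpha$, optimized numerically and then rounded to rationals. If the margin is still too small, we will keep the $y=0$ fibre and the nearest nonzero fibres of $\Gamma$ exactly, and apply the minorant only to the remaining fibres.
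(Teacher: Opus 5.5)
Your treatment of $\alpha\ge 9/4$ matches the paper's: for those scales the pair $\pm v$ gives $2e^{-21\pi\alpha/20}>\frac{243}{10}e^{-\pi\sqrt2\alpha}$, and the crossover is near $\alpha\approx2.18$.

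The gap is in $1\le\alpha\le 9/4$, and it is decisive at $\alpha=1$, $a=21/20$, where $\Theta(1,\D)\approx1.2858$.

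\emph{The step as written.} If $g\le e^{-\pi\alpha|\cdot|^2}$ everywhere, then $\widehat g(0)=\int g\le\alpha^{-3/2}$. So no choice of $\beta$ or $P$ improves on $g=e^{-\pi\alpha|\cdot|^2}$ itself. Your chain then gives at best $\Theta(\alpha,L)\ge\alpha^{-2}c(\alpha a)$, which is about $0.90$ at $\alpha=1$. Keeping the zero fibre exactly raises this only to about $1.0739+0.878(\sqrt{1.05}-1)\approx1.096$.

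\emph{The corrected step is still too weak.} For a Cohn--Elkies bound to help, the minorant condition must be imposed only on $\Gamma\setminus\{0\}$, using $|y|^2\ge3a/4$ from the minimality of $v$, and $g(0)$ must be subtracted. Even then, bounding every nonzero fibre by its worst shift costs the factor $\vartheta_\tau(1/2)\approx0.878$, with $\tau=\pi\alpha a$, instead of the average $(\alpha a)^{-1/2}\approx0.976$. At $\alpha=1$ you would therefore need $\sum_{y\in\Gamma\setminus\{0\}}e^{-\pi|y|^2}\ge0.2416$ uniformly over three-dimensional lattices of covolume $a^{-1/2}$. BCC at that covolume gives only about $0.2469$. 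Your three-dimensional bound would have to come within about $2\%$ of the BCC value, and the non-sharp linear-programming method in dimension three cannot be expected to do that.

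\emph{Two further problems.} The fallback of keeping ``the nearest nonzero fibres exactly'' is not available, because their lengths and shifts are unknown. Also, ``smaller $a$ only helps'' is not established for your bound, since $a^{1/2}\widehat g(0)$ increases with $a$.

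The missing idea is to apply the Fourier minorant in dimension four, to $L$ itself, so that no fibrewise worst case is ever taken. The paper writes $e^{-\pi\alpha|x|^2}=e^{-\pi\alpha|x|^2}S(2\pi\alpha|x|^2)+e^{-\pi\alpha|x|^2}P(2\pi\alpha|x|^2)$, with $S=1-P\ge0$ and $P^\sharp\ge0$ on $[0,\infty)$. It applies Poisson summation via \eqref{eq:Lag4} to the $P$-part. It then keeps only the origin in both sums and the shortest pair in the $S$-sum:
\[
 \Theta(\alpha,L)-1\ge\alpha^{-2}P^\sharp(0)-P(0)+2e^{-\pi\alpha a}S(2\pi\alpha a).
\]
The verified inequality $S-2S'\ge0$ on $[0,(21\pi/10)\alpha]$ makes $u\mapsto e^{-u/2}S(u)$ nonincreasing. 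This is what legitimately reduces the last term to $a=21/20$. The explicit polynomials of Lemma~\ref{lem:cuspdata}, interpolated in $\alpha$, then give the margin $1/4000$.

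Projection along a shortest vector with a three-dimensional minorant does appear later, in Lemma~\ref{lem:projectedLP}, but only at stationary pairs. There the covariance and scale identities supply the multipliers $\gamma,\eta$, and the minorant is checked against the exact fibre weight under the coupled constraint $w+\delta^2\ge1$.
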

\begin{proof}
Let $S=1-P$. If $S\ge0$ and $P^\sharp\ge0$ on the
interval $[0,\infty)$, Poisson summation and \eqref{eq:Lag4} give
\begin{align}\label{eq:cuspidentity}
 \Theta(\alpha,L)
 ={}&\sum_{v\in L}e^{-\pi\alpha|v|^2}S(2\pi\alpha|v|^2)\notag\\
 &+\alpha^{-2}\sum_{y\in L^*}
       e^{-\pi|y|^2/\alpha}P^\sharp(2\pi|y|^2/\alpha).
\end{align}
If $S-2S'\ge0$ on $[0,(21\pi/10)\alpha]$, then
$u\mapsto e^{-u/2}S(u)$ is nonincreasing there. Retaining the origin
in each sum and a shortest pair in the first, with $S(0)=1-P(0)$, gives
\begin{equation}\label{eq:cuspminorant}
 \Theta(\alpha,L)-1\ge
 \alpha^{-2}P^\sharp(0)-P(0)+
 2e^{-(21\pi/20)\alpha}S((21\pi/10)\alpha).
\end{equation}

For $1\le\alpha\le9/4$, Lemma~\ref{lem:cuspdata} supplies such
polynomials, with the bound for $\Theta(\alpha,L)-1$ in
\eqref{eq:cuspminorant} exceeding
$(243/10)e^{-\pi\sqrt2\alpha}+1/4000$; Lemma~\ref{lem:reference}
then gives the uniform gap. For $\alpha\ge9/4$, the shortest-pair
contribution exceeds \eqref{eq:refbound}, proving \eqref{eq:cusp}.
\end{proof}

\begin{proposition}[Comparison at extreme scales]
\label{prop:extreme}
For every covolume-one lattice $L\subset\R^4$ and every
$\alpha\in(0,1/11]\cup[11,\infty)$,
\[
 \Theta(\alpha,L)\ge\Theta(\alpha,\D).
\]
At each such scale, equality holds precisely when $L$ is isometric to
$\D$.
\end{proposition}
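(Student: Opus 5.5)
By Lemma~\ref{lem:isodual}, $\Phi(\alpha,L)=\Phi(1/\alpha,L^*)$, and $L\mapsto L^*$ preserves covolume one and isometry with $\D$ (since $\D^*=J\D$). So the range $\alpha\le1/11$ reduces to $\alpha\ge11$, and it suffices to treat $\alpha\ge11$. There I split according to the position of $R=Q/a(Q)$ relative to the trace neighborhoods, using the case division already set up in this section.

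\emph{Case 1:} $h_P(R)\le11/25$ for some $P\in\DD$. Theorem~\ref{prop:Dcap} applies for every $\alpha\ge1$. It gives $\Theta(\alpha,Q)\ge\Theta(\alpha,\D)$, with equality exactly when $L$ is isometric to $\D$.

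\emph{Case 2:} $h_P(R)\le1/10$ for some $P\in\Acl$. Proposition~\ref{prop:Acap} gives a strict inequality for all $\alpha\ge1$.

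\emph{Case 3:} neither holds. Then Corollary~\ref{cor:abound} gives $a(L)\le25/\sqrt{351}$. Keeping only the origin and one shortest antipodal pair gives
\[
 \Theta(\alpha,L)-1\ge2e^{-\pi\alpha a(L)}\ge2e^{-\pi\alpha\cdot25/\sqrt{351}}.
\]
By \eqref{eq:refbound}, $\Theta(\alpha,\D)-1<\tfrac{243}{10}e^{-\pi\sqrt2\alpha}$. So strict inequality follows once
\[
 \pi\alpha\Bigl(\sqrt2-\tfrac{25}{\sqrt{351}}\Bigr)>\log\tfrac{243}{20}.
\]
Here $25/\sqrt{351}\approx1.33440$, so $\sqrt2-25/\sqrt{351}\approx0.0798$. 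The left side at $\alpha=11$ is then about $2.758$, while $\log(12.15)\approx2.497$. The inequality therefore holds at $\alpha=11$, and it holds for all larger $\alpha$ because the left side increases in $\alpha$. I would check this numerical step with rational bounds: a lower bound for $\sqrt2$, an upper bound for $25/\sqrt{351}$, and an upper bound for $\log(243/20)$.

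Equality in the statement can therefore occur only in Case 1, which forces $L\cong\D$. The converse is trivial. The main obstacle is the Case 3 numerics, since the margin (about $0.26$ in the exponent) is modest. If it were tighter, I would also keep the second shortest vector, or bound $\Theta(\alpha,\D)-1$ more sharply for $\alpha\ge11$ using its first term $24e^{-\pi\sqrt2\alpha}$ plus a tiny tail. The latter refinement makes the margin comfortable, since $\log 12<2.758$.
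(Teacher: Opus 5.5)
Your proposal is correct and matches the paper's proof: isoduality (Lemma~\ref{lem:isodual}) reduces to $\alpha\ge11$, Theorem~\ref{prop:Dcap} and Proposition~\ref{prop:Acap} handle the two trace neighborhoods, and outside them Corollary~\ref{cor:abound} together with a shortest antipodal pair beats \eqref{eq:refbound}. The only difference is that the paper rounds $25/\sqrt{351}$ up to $67/50$, which leaves a smaller but still positive margin at $\alpha=11$ (about $0.07$ in the exponent instead of your $0.26$).
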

\begin{proof}
For $\alpha\ge11$, Theorem~\ref{prop:Dcap} and
Proposition~\ref{prop:Acap} give the comparison on both trace
neighborhoods. Outside them, Corollary~\ref{cor:abound} gives
$a(L)\le25/\sqrt{351}<67/50$, so the shortest-pair contribution
exceeds \eqref{eq:refbound}. Equality therefore forces $L$ to be
isometric to $\D$. Poisson summation and isoduality give the
comparison and the same equality characterization for
$\alpha\le1/11$.
\end{proof}

Combining the preceding comparisons with the determinant bound
gives the following localization.
\begin{corollary}[Localization of the remaining pairs]
\label{prop:localization}
Let $L\subset\R^4$ have covolume one and not be isometric to $\D$,
let $\alpha\ge1$, and set $R=Q/a(L)$ for a Gram matrix $Q$ of $L$.
The strict comparison $\Theta(\alpha,L)>\Theta(\alpha,\D)$ holds
whenever at least one of the following conditions is satisfied:
\[
 \alpha\ge11,\quad a(L)\le\frac{21}{20},\quad
 h_P(R)\le\frac{11}{25}\text{ for some }P\in\DD,\quad
 h_P(R)\le\frac1{10}\text{ for some }P\in\Acl.
\]
For every remaining pair,
\begin{equation}\label{eq:prelocalization}
 1\le\alpha<11,\qquad
 \frac{21}{20}<a(L)\le\frac{25}{\sqrt{351}},
\end{equation}
and
\begin{equation}\label{eq:precapexclusion}
 h_P(R)>\frac{11}{25}\quad(P\in\DD),\qquad
 h_P(R)>\frac1{10}\quad(P\in\Acl).
\end{equation}
\end{corollary}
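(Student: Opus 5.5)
This is a bookkeeping consequence of Theorem~\ref{prop:Dcap}, Proposition~\ref{prop:Acap}, Proposition~\ref{prop:cusp}, Proposition~\ref{prop:extreme} and Corollary~\ref{cor:abound}. The plan is to verify the strict comparison under each of the four listed conditions, and then take the contrapositive to locate the remaining pairs. Throughout, $L$ is a covolume-one lattice not isometric to $\D$, $\alpha\ge1$, and $Q$ is a Gram matrix of $L$. Replacing $Q$ by an integrally equivalent form changes neither $\Theta$ nor the infima of $h_P(R)$ over the full orbits $\DD$ and $\Acl$.

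First, I treat the four conditions one at a time.
\begin{itemize}
 \item If $\alpha\ge11$, Proposition~\ref{prop:extreme} gives $\Theta(\alpha,L)\ge\Theta(\alpha,\D)$. Its equality clause excludes equality, since $L$ is not isometric to $\D$, so the inequality is strict.
 \item If $a(L)\le21/20$, Proposition~\ref{prop:cusp} gives strictness directly for every $\alpha\ge1$.
 \item If $h_P(R)\le11/25$ for some $P\in\DD$, Theorem~\ref{prop:Dcap} gives the inequality for $\alpha\ge1$. Its equality statement holds at each fixed $\alpha\ge1$, so non-isometry again gives strictness.
 \item If $h_P(R)\le1/10$ for some $P\in\Acl$, Proposition~\ref{prop:Acap} gives a positive gap $\tfrac1{10}e^{-\pi\sqrt2\alpha}$, hence strictness.
\end{itemize}
The only point to check here is that $R=Q/a(L)$ is exactly the normalization used in those statements, with $\det Q=1$. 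It is.

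Next, for a pair satisfying none of the conditions, we have $1\le\alpha<11$ and $a(L)>21/20$. We also have $h_P(R)>11/25$ for every $P\in\DD$ and $h_P(R)>1/10$ for every $P\in\Acl$, which is \eqref{eq:precapexclusion} pointwise. Pointwise strict inequalities give $\inf_{P\in\DD}h_P(R)\ge11/25$ and $\inf_{P\in\Acl}h_P(R)\ge1/10$. Corollary~\ref{cor:abound} then yields $a(L)\le25/\sqrt{351}$, which completes \eqref{eq:prelocalization}.

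The only potential subtlety is this passage from pointwise strict bounds to the infimum hypotheses. It is harmless, since Corollary~\ref{cor:abound} only needs the non-strict pointwise bounds. There is no real obstacle; the statement is a collation of the earlier results.
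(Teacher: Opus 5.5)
Your proposal is correct and follows essentially the same route as the paper: it cites Theorem~\ref{prop:Dcap}, Proposition~\ref{prop:Acap}, Proposition~\ref{prop:cusp} and Proposition~\ref{prop:extreme} for strictness under each condition, and obtains the remaining bounds by negating the conditions plus Corollary~\ref{cor:abound}. You are also right that Corollary~\ref{cor:abound} is stated with pointwise non-strict trace bounds, so no infimum issue arises.
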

\begin{proof}
Theorem~\ref{prop:Dcap} and Proposition~\ref{prop:Acap} give the
strict comparison in the two trace neighborhoods for lattices not
isometric to $\D$. Propositions~\ref{prop:cusp} and~\ref{prop:extreme}
give it for short vectors and large scales. Outside these regions,
the strict trace bounds hold, and Corollary~\ref{cor:abound} gives
the upper bound for $a(L)$; the other parameter bounds follow from
the definitions of the regions.
\end{proof}

\section{Global comparison and proof of the theta theorem}
\label{sec:joint}\label{sec:certificate}\label{sec:finish}
In this section, we prove that the global minimum of $\Phi$ is
zero. Compactness gives attainment, and the following comparison
at stationary pairs determines the minimum and its equality cases.
\begin{theorem}[Comparison at stationary pairs]
\label{thm:stationary-comparison}
Let $L\subset\R^4$ have covolume one and let $\alpha\ge1$. If
\[
 \partial_\alpha\Phi(\alpha,L)=0,\qquad
 \left.\frac{d}{d\varepsilon}
 \Phi(\alpha,e^{\varepsilon H/2}L)\right|_{\varepsilon=0}=0
 \quad\text{for every }H=H^T\text{ with }\tr H=0,
\]
then
\[
 \Theta(\alpha,L)\ge\Theta(\alpha,\D),
\]
with equality if and only if $L$ is isometric to $\D$.
\end{theorem}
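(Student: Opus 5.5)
First I would use Corollary~\ref{prop:localization} to dispose of everything except the remaining pairs. Then \eqref{eq:prelocalization} and \eqref{eq:precapexclusion} hold, and in particular $1\le\alpha<11$ and $21/20<a\le25/\sqrt{351}$ with $a=a(L)$. On this set I want strict inequality, since $L$ is not isometric to $\D$ there. The equality case then follows, because Theorem~\ref{prop:Dcap} gives strictness for $\alpha\ge1$ at non-isometric lattices in the $D_4$ neighborhood.

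The next step is to turn the stationarity conditions into identities. Shape stationarity comes from differentiating $\sum_{v\in L}e^{-\pi\alpha v^Te^{\eps H}v}$ at $\eps=0$. It gives $\sum_{v}vv^Te^{-\pi\alpha|v|^2}\perp H$ for every traceless $H$, so the second-moment matrix equals $\tfrac14M(\pi\alpha,L)I$. Scale stationarity reads
\[
\partial_\alpha\bigl[\alpha(\Theta(\alpha,L)-\Theta(\alpha,\D))\bigr]=0,
\]
that is,
\[
\Theta(\alpha,L)-\Theta(\alpha,\D)=\pi\alpha\bigl(M(\pi\alpha,L)-M(\pi\alpha,\D)\bigr).
\]
Keeping only a shortest antipodal pair gives $M(\pi\alpha,L)\ge2ae^{-\pi\alpha a}$. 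The scalar structure should improve this by a factor of $4$: averaging over directions, the moment along the shortest direction $v$ is $\tfrac14 M$, and this quantity is at least $\sum_{k}k^2a\,e^{-\pi\alpha ak^2}\ge 2ae^{-\pi\alpha a}$. This yields the displayed lower bound $\pi\alpha(8ae^{-\pi\alpha a}-M(\pi\alpha,\D))$. For $\alpha\ge5$ I would combine $a\le25/\sqrt{351}$, which makes $8ae^{-\pi\alpha a}$ decrease slowly, with \eqref{eq:MD34}. It then suffices to check $8ae^{-\pi\alpha a}e^{\pi\sqrt2\alpha}>34$ at the worst endpoint of $a$, which is elementary since $25/\sqrt{351}<\sqrt2$.

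The main obstacle is the range $1\le\alpha\le5$. Here I would fibre $L$ over $\Z v$. Writing $L=\bigsqcup_{y\in\pi_{v^\perp}L}(\tilde y+\Z v)$, the theta series and the directional moment $\sum (v\cdot u)^2e^{-\pi\alpha|u|^2}$ become sums over the projected three-dimensional lattice $\Gamma_a$ of fibre weights. Each weight is a Gaussian in $|y|$ times a one-dimensional theta function in the fibre offset. The stationarity identity expresses $\Theta(\alpha,L)-\Theta(\alpha,\D)$ as a combination $\sum_y W(y)$ of these fibre weights. I would then bound every weight below, uniformly in the unknown offset, by a radial function $f(y)=e^{-c|y|^2}P(|y|^2)$. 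Here $P$ is a Laguerre polynomial chosen, via the $n=3$ analogue \eqref{eq:FT3} of \eqref{eq:Lag4}, so that $\widehat f\ge0$. The one-dimensional offset dependence would be handled by Poisson summation in the fibre, which shows that the minimum over offsets is at the half-period.

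Poisson summation on $\Gamma_a$, which has covolume $a^{-2}$, gives $\sum_{y\ne0}f(y)\ge a^2\widehat f(0)-f(0)$. Adding the explicit $y=0$ fibre, which is the $\Z v$ contribution, and subtracting the reference values from \eqref{eq:Dseries} should give a positive lower bound. I would subdivide the rectangle $[1,5]\times[21/20,25/\sqrt{351}]$ into finitely many boxes and fix rational Laguerre coefficients on each box. Positivity would then be certified by interval arithmetic, using the monotonicity of the bounds in $a$ and $\alpha$. I expect constructing these minorants near $a\approx21/20$ and $\alpha\approx1$ to be the hardest part, because the margins are thinnest there. If a single minorant fails, I would fall back on the moment bound of the previous paragraph, interpolated with the fibre bound.
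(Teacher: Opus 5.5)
Your outline has the same architecture as the paper's proof: localization, the two stationarity identities, and $M(t,L)\ge 8ae^{-ta}$ from the directional moment. Your endpoint check at $\alpha=5$, $a=25/\sqrt{351}$ gives about $37>34$, which is fine. For $1\le\alpha\le5$ you also match the paper: projection to $\Gamma_a$ with $\covol(\Gamma_a)=a^{-2}$, a three-dimensional Laguerre minorant with $\widehat f\ge0$, the bound $\sum_{y\ne0}f(y)\ge a^2\widehat f(0)-f(0)$, and a finite covering with the moment bound as the other alternative. The paper uses the moment bound as a per-box dichotomy, not an interpolation.

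The step that fails is your treatment of the fibre offset: ``Poisson summation in the fibre shows that the minimum over offsets is at the half-period.'' This holds for $\vartheta_\tau(\delta)$ alone, but the weights that encode stationarity are not of that form. Add the shape identity $S_\perp=3S_\parallel$ and the scale identity with free multipliers $\gamma,\eta$; you should make this freedom explicit, since the certificates rely on it. The nonzero-fibre weights are then $e^{-\tau w}[A\vartheta_\tau(\delta)-C\varphi_\tau(\delta)]$, with $A=1+\eta+(\gamma-\eta\tau)w$ and $C=3\gamma+\eta\tau$.

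Fibre Poisson summation gives $\varphi_\tau$ a first harmonic with coefficient proportional to $\frac1{2\tau}-\frac{\pi^2}{\tau^2}$. This is negative for $\tau<2\pi^2$, so $\varphi_\tau$ increases toward $\delta=1/2$. Moreover, if $\gamma-\eta\tau<0$, then $A<0$ for large $w$.

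A concrete case: take $\eta=0$, $\gamma\approx-0.29$ and $\tau=329/100$ (all of the paper's certificates have $\gamma<0$). At $w=2$ the bracket is about $0.51$ at $\delta=0$ versus $0.56$ at $\delta=1/2$. At $w=4$ it is about $-0.12$ versus $+0.04$. So a minorant certified only at the half-period need not satisfy $f\le F$ at actual lattice fibres, and the Poisson step is then unjustified.

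You must certify the minorant on the whole admissible set $w\ge3/4$, $0\le\delta\le1/2$, $w+\delta^2\ge1$. This shortest-vector constraint, which your outline never states, is what defines the region where the minorant is needed. The paper does exactly this. It verifies $G_5>10^{-30}$ on the three-parameter box $(\sigma,\delta,x)$ with $w=1-\delta^2+x$, controlling fibre truncation errors below $10^{-30}$. It treats $w\ge10$ by a separate radial tail bound based on $P_\sigma(u)+(1+u)^2<0$ for $u\ge60$.

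A smaller point: the minorant condition depends on $(\alpha,a)$ only through $\tau=\pi\alpha a$. So the auxiliary functions can be indexed by windows in $\tau$, as the paper does, rather than by boxes in $(\alpha,a)$.
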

We call a pair stationary for $\Phi$ when these two derivative
conditions hold. The variations range over all positive scales and
covolume-one shapes.

The local comparisons cover the trace neighborhoods and short
vectors. At the remaining stationary pairs, the Gaussian moment
bound gives strict comparison for $\alpha\ge5$. Projection along
a shortest vector and a three-dimensional Fourier bound then give
positive lower bounds for the theta difference on the remaining
scalar rectangle.

\begin{lemma}[Attainment of the global minimum]
\label{lem:attainment}
The function $\Phi$ attains a global minimum over all covolume-one
lattice shapes and all positive scales. A minimizing pair can be
chosen with $\alpha\ge1$.
\end{lemma}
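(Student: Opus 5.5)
The plan is to restrict to a compact set of pairs using the extreme-scale comparison, the short-vector exclusion, and compactness of theta sublevels, and then apply Weierstrass. First note that $\Phi(\alpha,\D)=0$, so $\inf\Phi\le0$. By Lemma~\ref{lem:isodual}, $\Phi(\alpha,L)=\Phi(1/\alpha,L^*)$. Hence the infimum over all pairs equals the infimum over pairs with $\alpha\ge1$, and any minimizer with $\alpha<1$ can be replaced by $(1/\alpha,L^*)$. It therefore suffices to show that $\Phi$ attains its infimum on $\{\alpha\ge1\}$.

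On $\alpha\ge11$, Proposition~\ref{prop:extreme} gives $\Phi\ge0$. Fix any sequence $(\alpha_n,L_n)$ with $\alpha_n\ge1$ and $\Phi(\alpha_n,L_n)\to\inf\Phi$. If $\inf\Phi=0$, the pair $(1,\D)$ already attains it and we are done. So suppose $\inf\Phi<0$. Then eventually $\Phi(\alpha_n,L_n)<0$, which forces $\alpha_n<11$ by Proposition~\ref{prop:extreme}. It also forces $a(L_n)>21/20$ by Proposition~\ref{prop:cusp}, although this bound is not strictly needed. Negativity gives $\Theta(\alpha_n,L_n)<\Theta(\alpha_n,\D)\le\Theta(1,\D)$, a uniform constant $C$, with $\alpha_n\in[1,11]$.

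Lemma~\ref{lem:sublevels} then places reduced Gram matrices $Q_n$ of $L_n$ in a compact subset of $\Sym_4^+$, and these matrices keep determinant one. Pass to a subsequence with $Q_n\to Q_*$ and $\alpha_n\to\alpha_*\in[1,11]$. By the uniform convergence in Lemma~\ref{lem:sublevels}, $\Phi$ is continuous in $(Q,\alpha)$. Since $\Phi$ is invariant under integral basis change, the limit pair, with $\det Q_*=1$, attains the infimum and has $\alpha_*\ge1$.

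The only delicate point is ensuring that a minimizing sequence cannot escape to $\alpha\to\infty$, to $\alpha\to0$, or toward degenerate shapes. Escape in $\alpha$ is excluded by Proposition~\ref{prop:extreme} combined with duality. Shape degeneration is excluded by the bound $\Theta\le C$ on a bounded scale window, which yields $a(L)\ge1/(11C^2)$ by Lemma~\ref{lem:sublevels}. The case $\inf\Phi=0$ is handled separately, as above, by the explicit minimizer $\D$.
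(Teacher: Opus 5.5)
Your proof is correct and takes essentially the paper's approach. It uses the same ingredients: Proposition~\ref{prop:extreme}, monotonicity of $\alpha\mapsto\Theta(\alpha,\D)$, the compactness in Lemma~\ref{lem:sublevels}, continuity of theta, and the duality \eqref{eq:defectduality}. The only differences are organizational: the paper fixes the compact set $\alpha\in[1/11,11]$, $\Theta(\alpha,Q)\le\Theta(1/11,\D)$ and applies duality at the end, whereas you apply duality first and run a minimizing sequence with threshold $\Theta(1,\D)$; as you note, the appeal to Proposition~\ref{prop:cusp} is unnecessary.
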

\begin{proof}
By Lemma~\ref{lem:sublevels} and continuity of theta, the set of
pairs $(\alpha,Q)$ with $Q$ Minkowski reduced, $\det Q=1$, and
\[
 \frac1{11}\le\alpha\le11,\qquad
 \Theta(\alpha,Q)\le\Theta(1/11,\D)
\]
is compact. It contains a representative of $(1,\D)$, so the
minimum of $\Phi$ on this set is attained and is at most zero.
For scales outside this interval, Proposition~\ref{prop:extreme}
gives $\Phi\ge0$. Inside the interval, pairs outside the set,
after reduction, satisfy
\[
 \Theta(\alpha,L)>\Theta(1/11,\D)\ge\Theta(\alpha,\D),
\]
so $\Phi>0$. Thus the attained minimum is global.
Equation~\eqref{eq:defectduality} permits a minimizing scale
$\alpha\ge1$.
\end{proof}

Reduction is used only to obtain compact representatives. The
minimum is global in the full space of positive scales and
covolume-one lattices, so all scale and volume-preserving shape
derivatives vanish, even at a boundary of the reduced domain or
when duality selects $\alpha=1$.

\begin{lemma}[First variations and the shortest-vector moment]
\label{lem:firstvariation}
Let $(\alpha,L)$ be stationary for $\Phi$, with $\alpha>0$, and put
$t=\pi\alpha$ and $a=a(L)$. Then
\begin{align}
 \sum_{v\in L}vv^Te^{-t|v|^2}&=\frac{M(t,L)}4I_4,
                  \label{eq:jointcov}\\
 \Theta(\alpha,L)-\Theta(\alpha,\D)
   &=t\bigl(M(t,L)-M(t,\D)\bigr),
                  \label{eq:jointscale}
\end{align}
and
\begin{equation}\label{eq:pairmoment}
 M(t,L)\ge8a e^{-ta}.
\end{equation}
If $\alpha\ge5$ and $21/20<a<67/50$, then
$\Theta(\alpha,L)>\Theta(\alpha,\D)$.
\end{lemma}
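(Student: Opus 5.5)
The plan is to compute the two first variations directly, termwise, as justified by Lemma~\ref{lem:sublevels}. Write $L=B\Z^4$ and vary the shape as $e^{\varepsilon H/2}L$ with $H$ symmetric and $\tr H=0$. Then $|e^{\varepsilon H/2}v|^2=v^Te^{\varepsilon H}v$, so
\[
\frac{d}{d\varepsilon}\Theta(\alpha,e^{\varepsilon H/2}L)\Big|_{\varepsilon=0}
=-t\sum_{v\in L}v^THv\,e^{-t|v|^2}=-t\,\tr\bigl(H\,S\bigr),
\qquad S=\sum_{v\in L}vv^Te^{-t|v|^2}.
\]
The reference term $\Theta(\alpha,\D)$ does not depend on the shape. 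Shape stationarity therefore says that $\tr(HS)=0$ for every traceless symmetric $H$. Hence $S$ is a scalar matrix, and since $\tr S=M(t,L)$ we obtain \eqref{eq:jointcov}.

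For the scale condition, differentiate $\Phi=\alpha(\Theta(\alpha,L)-\Theta(\alpha,\D))$:
\[
\partial_\alpha\Phi=\Theta(\alpha,L)-\Theta(\alpha,\D)-\pi\alpha\bigl(M(t,L)-M(t,\D)\bigr),
\]
using $\partial_\alpha\Theta(\alpha,\cdot)=-\pi M(\pi\alpha,\cdot)$. Setting this to zero gives \eqref{eq:jointscale}.

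For \eqref{eq:pairmoment}, take a shortest vector $u$ with $|u|^2=a$ and evaluate \eqref{eq:jointcov} in the direction $u/|u|$. On one side,
\[
\frac{M(t,L)}4=\sum_{v\in L}\frac{(u\cdot v)^2}{a}e^{-t|v|^2}.
\]
All terms are nonnegative, so we may drop every term except $v=\pm u$, which contribute $2a\,e^{-ta}$. This gives $M(t,L)\ge8ae^{-ta}$. (The scalar form of $S$ is exactly what upgrades the naive bound $M\ge 2ae^{-ta}$ to the factor $8$.)

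For the final claim, combine \eqref{eq:jointscale} with \eqref{eq:pairmoment}:
\[
\Theta(\alpha,L)-\Theta(\alpha,\D)\ge t\bigl(8ae^{-ta}-M(t,\D)\bigr).
\]
For $\alpha\ge5$, Lemma~\ref{lem:reference} gives $M(t,\D)<34e^{-\pi\sqrt2\alpha}$. It then suffices to show that
\[
8a\,e^{\pi\alpha(\sqrt2-a)}\ge 34
\]
for $21/20<a<67/50$ and $\alpha\ge5$. Because $67/50<\sqrt2$, the exponent is positive and increasing in $\alpha$, so it is enough to check $\alpha=5$. The function $a\mapsto 8ae^{5\pi(\sqrt2-a)}$ is decreasing on this interval, since its logarithmic derivative $1/a-5\pi$ is negative. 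So the worst case is $a=67/50$, where $\sqrt2-a>0.074$ and $5\pi\cdot0.074>1.16$. This gives $8\cdot1.34\cdot e^{1.16}>10.7\cdot3.19>34$, so strictness holds.

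I expect the main obstacle to be this final numerical margin, which is modest but clears the bound. The remaining steps are justified by Lemma~\ref{lem:sublevels}, which licenses the termwise differentiation.
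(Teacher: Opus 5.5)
Your proposal is correct and follows the paper's proof step for step: shape stationarity forces $\sum_{v\in L}vv^Te^{-t|v|^2}$ to be scalar, scale stationarity gives \eqref{eq:jointscale}, keeping only $v=\pm u$ in the quadratic form along a shortest vector gives $M(t,L)\ge 8ae^{-ta}$, and \eqref{eq:MD34} settles the case $\alpha\ge5$. The paper only asserts $8ae^{\pi\alpha(\sqrt2-a)}>34$, while you verify it; your product inequality is true, but the factorwise comparison $e^{1.16}>3.19$ it suggests is false ($e^{1.16}\approx3.1899$), so it is cleaner to write $10.72\,e^{1.16}>10.72\cdot3.18>34$.
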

\begin{proof}
For symmetric $H$ with $\tr H=0$, the lattice
$L_\varepsilon=e^{\varepsilon H/2}L$ has covolume one. Since
$|e^{\varepsilon H/2}v|^2=v^Te^{\varepsilon H}v$,
Lemma~\ref{lem:sublevels} permits differentiation:
\[
 0=\left.\frac{d}{d\varepsilon}\Phi(\alpha,L_\varepsilon)
                                          \right|_{\varepsilon=0}
 =-\pi\alpha^2\sum_{v\in L}v^THv\,e^{-t|v|^2}.
\]
The symmetric matrices orthogonal to all traceless symmetric
matrices are precisely the scalar matrices. Taking the trace
therefore gives
\eqref{eq:jointcov}. Differentiation in scale gives
\[
 0=\partial_\alpha\Phi(\alpha,L)
 =\Theta(\alpha,L)-\Theta(\alpha,\D)
       -t\bigl(M(t,L)-M(t,\D)\bigr),
\]
which is \eqref{eq:jointscale}. For a shortest vector $v$, take
the quadratic form of \eqref{eq:jointcov} in the unit direction
$v/\sqrt a$. Retaining $v$ and $-v$ gives
$M(t,L)/4\ge2ae^{-ta}$, hence \eqref{eq:pairmoment}.

For $\alpha\ge5$ and $21/20<a<67/50$,
$8a e^{\pi\alpha(\sqrt2-a)}>34$. Thus \eqref{eq:MD34} and
\eqref{eq:pairmoment} give $M(t,L)>M(t,\D)$, and
\eqref{eq:jointscale} gives the strict theta comparison.
\end{proof}

The remaining scalar estimates concern the closed rectangle
\begin{equation}\label{eq:closedrectangle}
 \mathscr B=[1,5]\times[21/20,133441/100000].
\end{equation}
On this rectangle, $\tau=\pi\alpha a$ satisfies
\begin{equation}\label{eq:taurange}
 \frac{329}{100}<\tau<21.
\end{equation}

We next project along a shortest vector $v$. Set $a=|v|^2$ and define
\begin{equation}\label{eq:Gamma}
 \Gamma_a=a^{-1/2}\pi_{v^\perp}(L)\subset\R^3.
\end{equation}
Identify $v^\perp$ orthogonally with $\R^3$; radial sums are independent
of the choice.
\begin{lemma}[The projected lattice and its fibres]
\label{lem:fibres}
Let $v$ be a shortest vector of the covolume-one lattice $L$, set
$a=|v|^2$ and $e=v/\sqrt a$, and let $\Gamma_a$ be defined by
\eqref{eq:Gamma}. Then
\begin{equation}\label{eq:Gammafacts}
 \covol(\Gamma_a)=a^{-2},\qquad
 |y|^2\ge\frac34\quad(y\in\Gamma_a\setminus\{0\}).
\end{equation}
For each $y\in\Gamma_a\setminus\{0\}$, there is a signed displacement
$\widetilde\delta_y\in[-1/2,1/2]$ for which the fibre is
\[
 \left\{\sqrt a\bigl(y+(k+\widetilde\delta_y)e\bigr):k\in\Z\right\}.
\]
In scalar sums even in the parallel coordinate, one may replace
$\widetilde\delta_y$ by $\delta_y=|\widetilde\delta_y|$. Then
$0\le\delta_y\le1/2$ and
\begin{equation}\label{eq:fibreconstraint}
 w+\delta_y^2\ge1,\qquad w=|y|^2.
\end{equation}
The fibre over $0$ is $\Z v$.
\end{lemma}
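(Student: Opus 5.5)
The plan is to extend the primitive vector $v$ to a basis $v,b_2,b_3,b_4$ of $L$ and read off the projected lattice and its fibres from this basis. Since $v$ is primitive (it is shortest), $\pi_{v^\perp}(L)$ is the lattice spanned by $\pi_{v^\perp}(b_2),\pi_{v^\perp}(b_3),\pi_{v^\perp}(b_4)$. Its covolume equals $\covol(L)/|v|=a^{-1/2}$, because the parallelepiped volume factors as base times height along $v$. Rescaling by $a^{-1/2}$ in three dimensions multiplies the covolume by $a^{-3/2}$. This gives $\covol(\Gamma_a)=a^{-1/2}\cdot a^{-3/2}=a^{-2}$.

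For the fibres, I would fix $y\in\Gamma_a$ and pick any $u\in L$ with $a^{-1/2}\pi_{v^\perp}(u)=y$. Then $u=\sqrt a\,y+(u\cdot e)e$, and the full preimage of $y$ in $L$ is $u+\Z v$. Writing $u\cdot e=\sqrt a\,(k_0+\widetilde\delta_y)$, I choose the integer $k_0$ so that $\widetilde\delta_y\in[-1/2,1/2]$; this is the nearest-integer reduction of $u\cdot e/\sqrt a=u\cdot v/a$. The fibre is then
\[
\{\sqrt a\,(y+(k+\widetilde\delta_y)e):k\in\Z\}.
\]
Over $y=0$ we may take $u=0$, so the fibre is $\Z v$. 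For sums that are even in the parallel coordinate, the map $k\mapsto -k$ sends the fibre with offset $\widetilde\delta_y$ to the one with $-\widetilde\delta_y$. This justifies replacing $\widetilde\delta_y$ by $\delta_y=|\widetilde\delta_y|\in[0,1/2]$.

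The fibre constraint comes from minimality of $v$. The fibre point with $k=0$ is a lattice vector $u'$. It is nonzero for $y\neq0$, since its projection is $\sqrt a\,y\ne0$. Its squared length is $a(|y|^2+\widetilde\delta_y^2)$. Because $a=a(L)$ is the minimum, $a(w+\delta_y^2)\ge a$, which is $w+\delta_y^2\ge1$. Combining this with $\delta_y\le1/2$ gives $w\ge 1-1/4=3/4$, which is the lower bound in the statement.

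No step here is a real obstacle. The only care needed is in the covolume computation, specifically correctly tracking the primitivity of $v$ (guaranteed since a shortest vector is primitive), and in verifying that the choice of $k_0$ and of the lift $u$ does not affect the fibre set.
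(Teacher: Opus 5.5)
Your proof is correct and follows the paper's argument essentially step for step. Primitivity of the shortest vector gives the basis extension, and with it $L\cap\R v=\Z v$, which you use implicitly when describing each fibre as $u+\Z v$. The covolume is then $\covol(L)/|v|$ rescaled by $a^{-3/2}$, and minimality applied to the nonzero fibre point gives $w+\delta_y^2\ge1$, hence $w\ge3/4$.
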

\begin{proof}
A shortest vector is primitive, so $v$ extends to a basis of $L$
and $L\cap\R v=\Z v$. Projecting the other basis vectors gives
a basis of $\pi_{v^\perp}(L)$, whose covolume is
$\covol(L)/|v|=a^{-1/2}$; dilation by $a^{-1/2}$ gives
$\covol(\Gamma_a)=a^{-2}$. Each fibre is a translate of $\Z v$
with parallel shift $\sqrt a\,\widetilde\delta_y$ chosen so that
$|\widetilde\delta_y|\le1/2$. Substitution $k\mapsto-k$ makes even
scalar sums depend only on $\delta_y=|\widetilde\delta_y|$.
For $y\ne0$, minimality of $v$ gives $a(|y|^2+\delta_y^2)\ge a$,
hence \eqref{eq:fibreconstraint} and $|y|^2\ge3/4$.
\end{proof}

To express theta and its moments on each fibre, define, for $\tau>0$,
\begin{equation}\label{eq:fibresums}
 \vartheta_\tau(\delta)=\sum_{k\in\Z}e^{-\tau(k+\delta)^2},
 \qquad
 \varphi_\tau(\delta)=\sum_{k\in\Z}(k+\delta)^2e^{-\tau(k+\delta)^2}.
\end{equation}
Both functions are even and one-periodic, and their series converge
uniformly on $|\delta|\le1/2$ for fixed $\tau>0$.
The weighted fibre sums below are bounded by a constant multiple of
$(1+w)e^{-\tau w}$, summable over $\Gamma_a$; hence all regroupings
are justified by absolute convergence.

\begin{proposition}[The full-fibre stationarity identity]
\label{prop:fibreidentity}
Let $(\alpha,L)$ be a pair with $\alpha>0$ and $\covol(L)=1$ satisfying
\eqref{eq:jointcov} and \eqref{eq:jointscale}. Use the notation of
Lemma~\ref{lem:fibres}, and set $t=\pi\alpha$ and $\tau=ta$.
For arbitrary $\gamma,\eta\in\R$, define the zero-fibre contribution
and the general fibre weight by
\begin{align}
 Z_\tau={}&(1+\eta)\vartheta_\tau(0)
                  -(3\gamma+\eta\tau)\varphi_\tau(0),
                    \label{eq:Z}\\
 F_{\tau,\gamma,\eta}(w,\delta)={}&e^{-\tau w}
 \bigl[(1+\eta+(\gamma-\eta\tau)w)\vartheta_\tau(\delta)
             -(3\gamma+\eta\tau)\varphi_\tau(\delta)\bigr].
                    \label{eq:fibreF}
\end{align}
Then the following absolutely convergent identity holds:
\begin{equation}\label{eq:fibreidentity}
 \Theta(\alpha,L)=Z_\tau+
 \sum_{y\in\Gamma_a\setminus\{0\}}
 F_{\tau,\gamma,\eta}(|y|^2,\delta_y)
 -\eta\bigl(\Theta(\alpha,\D)-tM(t,\D)\bigr).
\end{equation}
\end{proposition}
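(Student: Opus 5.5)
The plan is to decompose every lattice sum along the fibres of Lemma~\ref{lem:fibres}, rewrite the three available relations (the definition of $\Theta$, the covariance identity \eqref{eq:jointcov}, and the scale identity \eqref{eq:jointscale}) as fibre sums, and take the linear combination with coefficients $1$, $\gamma$, $\eta$.

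\emph{Fibre parametrization.} By Lemma~\ref{lem:fibres}, every $u\in L$ is written uniquely as $u=\sqrt a\,(y+(k+\widetilde\delta_y)e)$ with $y\in\Gamma_a$ and $k\in\Z$, where $\widetilde\delta_0=0$. Since $e\perp y$,
\[
 t|u|^2=\tau\bigl(w+(k+\widetilde\delta_y)^2\bigr),\qquad w=|y|^2 .
\]
Summing over $k$ with $y$ fixed gives $\Theta(\alpha,L)=\sum_{y\in\Gamma_a}e^{-\tau w}\vartheta_\tau(\delta_y)$. Evenness of $\vartheta_\tau$ and $\varphi_\tau$ lets us replace $\widetilde\delta_y$ by $\delta_y$. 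Similarly,
\[
 M(t,L)=a\sum_{y}e^{-\tau w}\bigl(w\vartheta_\tau(\delta_y)+\varphi_\tau(\delta_y)\bigr).
\]
All regroupings are justified by the absolute-convergence remark preceding the proposition, since the fibre sums are dominated by a multiple of $(1+w)e^{-\tau w}$.

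\emph{The $\gamma$-relation.} Take the quadratic form of \eqref{eq:jointcov} in the unit direction $e$. This gives $\sum_u (u\cdot e)^2e^{-t|u|^2}=M(t,L)/4$. Subtracting it from the trace $M(t,L)$ shows that the perpendicular part equals $3M(t,L)/4$, so the perpendicular part is three times the parallel part. In fibre form this reads
\[
 a\sum_y e^{-\tau w}w\,\vartheta_\tau(\delta_y)=3a\sum_y e^{-\tau w}\varphi_\tau(\delta_y),
\]
that is,
\[
 0=\sum_{y}e^{-\tau w}\bigl(\gamma w\vartheta_\tau(\delta_y)-3\gamma\varphi_\tau(\delta_y)\bigr).
\]

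\emph{The $\eta$-relation.} From \eqref{eq:jointscale} and $tM(t,L)=\tau\sum_y e^{-\tau w}(w\vartheta_\tau+\varphi_\tau)$, we get
\[
 0=\eta\Bigl[\sum_y e^{-\tau w}\bigl(\vartheta_\tau(\delta_y)-\tau w\vartheta_\tau(\delta_y)-\tau\varphi_\tau(\delta_y)\bigr)-\bigl(\Theta(\alpha,\D)-tM(t,\D)\bigr)\Bigr].
\]

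\emph{Combination.} Adding both zero expressions to $\Theta(\alpha,L)=\sum_y e^{-\tau w}\vartheta_\tau(\delta_y)$ gives, for each $y\neq0$, the weight
\[
 e^{-\tau w}\bigl[(1+\eta+(\gamma-\eta\tau)w)\vartheta_\tau-(3\gamma+\eta\tau)\varphi_\tau\bigr]=F_{\tau,\gamma,\eta}(w,\delta_y).
\]
The term $y=0$ has $w=0$ and $\delta=0$, so it contributes exactly $Z_\tau$. The constant $-\eta(\Theta(\alpha,\D)-tM(t,\D))$ remains as stated, which proves \eqref{eq:fibreidentity}.

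The only delicate point is the bookkeeping of the factor $a$. The covariance relation is homogeneous in $a$, so $a$ cancels. In the scale relation, $t\cdot a=\tau$, which is why $\tau$ and not $t$ multiplies the fibre moments. No other obstacle is expected.
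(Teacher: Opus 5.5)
Your proof is correct and is essentially the paper's argument. The paper also splits each sum along the fibres and obtains $S_{\perp}=3S_{\parallel}$ from the trace and the $e$-component of \eqref{eq:jointcov}. It rewrites \eqref{eq:jointscale} using $ta=\tau$, adds $\gamma$ and $\eta$ times these vanishing combinations, and then separates the zero fibre as $Z_\tau=F_{\tau,\gamma,\eta}(0,0)$; the only cosmetic difference is that you sum over $k$ first and work with $\vartheta_\tau,\varphi_\tau$, while the paper keeps the double sums $S_{\perp},S_{\parallel}$ over $(y,k)$.
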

\begin{proof}
Write $z=k+\delta_y$, with $\delta_0=0$ and $w=|y|^2$, using the
unsigned shifts permitted in scalar sums by Lemma~\ref{lem:fibres}. Set
\[
 S_{\parallel}=\sum_{y,k}z^2e^{-\tau(w+z^2)},\qquad
 S_{\perp}=\sum_{y,k}w e^{-\tau(w+z^2)}.
\]
The covariance trace and its component in direction $e$ give
\[
 a(S_{\perp}+S_{\parallel})=M(t,L),\qquad
 aS_{\parallel}=\frac{M(t,L)}4.
\]
Thus $S_\perp-3S_\parallel=0$, while the scale identity gives
\[
 \Theta(\alpha,L)-\tau(S_{\perp}+S_{\parallel})
                  =\Theta(\alpha,\D)-tM(t,\D).
\]
Consequently, summing over all lifts, including the origin,
\begin{align*}
 &\sum_{y\in\Gamma_a}\sum_{k\in\Z}
 \bigl[1+\eta+(\gamma-\eta\tau)w-(3\gamma+\eta\tau)z^2\bigr]
                                  e^{-\tau(w+z^2)}\\
 &\quad=\Theta(\alpha,L)+\gamma(S_{\perp}-3S_{\parallel})
                   +\eta\bigl[\Theta(\alpha,L)-\tau(S_{\perp}+S_{\parallel})\bigr]\\
 &\quad=\Theta(\alpha,L)+\eta\bigl(\Theta(\alpha,\D)-tM(t,\D)\bigr).
\end{align*}
Separating the zero fibre, $Z_\tau=F_{\tau,\gamma,\eta}(0,0)$,
proves \eqref{eq:fibreidentity}; both $y$ and $-y$ are included.
\end{proof}

For radial minorants of the nonzero fibres, we use the normalized
three-dimensional Laguerre basis
\begin{equation}\label{eq:Lag3}
 \ell_k(u)=\frac{L_k^{1/2}(u)}{L_k^{1/2}(0)}
 =\sum_{j=0}^k\frac{(-1)^j\binom{k}{j}}{(3/2)_j}u^j,
 \qquad (3/2)_0=1.
\end{equation}
Here $(3/2)_j=(3/2)(5/2)\cdots(j+1/2)$ for $j\ge1$.
If $P=\sum b_k\ell_k$ and $P^\sharp=\sum(-1)^kb_k\ell_k$, then
\begin{equation}\label{eq:FT3}
 \widehat{\bigl[e^{-\pi|y|^2}P(2\pi|y|^2)\bigr]}(\xi)
 =e^{-\pi|\xi|^2}P^\sharp(2\pi|\xi|^2).
\end{equation}
All projected auxiliary polynomials use this normalization.

\begin{lemma}[A projected Fourier lower bound]
\label{lem:projectedLP}
Let $(\alpha,L)$ be a pair with $\alpha>0$ and $\covol(L)=1$ satisfying the
stationarity identities \eqref{eq:jointcov}--\eqref{eq:jointscale}. Set
$a=a(L)$, $t=\pi\alpha$ and
$\tau=ta$. Let $P$ be a real polynomial in the basis \eqref{eq:Lag3},
with Fourier partner $P^\sharp$, and let $\gamma,\eta\in\R$.
Assume that $P^\sharp(u)\ge0$ for every $u\ge0$ and that
\begin{equation}\label{eq:potentialminorant}
 e^{-\tau}e^{-\pi w}P(2\pi w)
 \le F_{\tau,\gamma,\eta}(w,\delta)
 \quad\left(w\ge\tfrac34,\ 0\le\delta\le\tfrac12,
                      \ w+\delta^2\ge1\right).
\end{equation}
Then
\begin{align}\label{eq:Bexact}
 e^\tau(\Theta(\alpha,L)-\Theta(\alpha,\D))\ge
 \mathcal B(\alpha,a):={}&a^2P^\sharp(0)-P(0)+e^\tau Z_\tau\notag\\
 &-e^\tau\bigl[(1+\eta)\Theta(\alpha,\D)-\eta tM(t,\D)\bigr].
\end{align}
\end{lemma}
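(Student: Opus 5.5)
We start from the full-fibre identity of Proposition~\ref{prop:fibreidentity}, which holds under exactly the stationarity hypotheses \eqref{eq:jointcov}--\eqref{eq:jointscale} assumed here. The plan is to bound the nonzero-fibre sum below by a single radial Gaussian-type sum over $\Gamma_a$, and then bound that sum by Poisson summation in dimension three. Rearranging \eqref{eq:fibreidentity} gives
\[
 \Theta(\alpha,L)-\Theta(\alpha,\D)=Z_\tau+\sum_{y\in\Gamma_a\setminus\{0\}}F_{\tau,\gamma,\eta}(|y|^2,\delta_y)-\bigl[(1+\eta)\Theta(\alpha,\D)-\eta tM(t,\D)\bigr].
\]
After multiplication by $e^\tau$, the zero-fibre term and the reference bracket already appear exactly as in $\mathcal B(\alpha,a)$. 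It therefore suffices to prove
\[
 e^\tau\sum_{y\in\Gamma_a\setminus\{0\}}F_{\tau,\gamma,\eta}(|y|^2,\delta_y)\ge a^2P^\sharp(0)-P(0).
\]

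The first step uses Lemma~\ref{lem:fibres}. Every $y\in\Gamma_a\setminus\{0\}$ has $w=|y|^2\ge3/4$ and an unsigned shift $\delta_y\in[0,1/2]$ with $w+\delta_y^2\ge1$. These are exactly the constraints in the hypothesis \eqref{eq:potentialminorant}, so we may apply it termwise to get
\[
 F_{\tau,\gamma,\eta}(|y|^2,\delta_y)\ge e^{-\tau}f(y),\qquad f(y)=e^{-\pi|y|^2}P(2\pi|y|^2).
\]
Both sides are absolutely summable over $\Gamma_a$: the fibre weights are bounded by a constant times $(1+w)e^{-\tau w}$, and $f$ is Schwartz. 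Summing therefore gives
\[
 e^\tau\sum_{y\ne0}F\ge\sum_{y\in\Gamma_a\setminus\{0\}}f(y).
\]

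The second step is Poisson summation on $\Gamma_a\subset\R^3$. By \eqref{eq:Gammafacts}, $\covol(\Gamma_a)=a^{-2}$, and by \eqref{eq:FT3}, $\widehat f(\xi)=e^{-\pi|\xi|^2}P^\sharp(2\pi|\xi|^2)$. Hence
\[
 \sum_{y\in\Gamma_a}f(y)=a^2\sum_{\xi\in\Gamma_a^*}e^{-\pi|\xi|^2}P^\sharp(2\pi|\xi|^2)\ge a^2P^\sharp(0),
\]
because $P^\sharp\ge0$ on $[0,\infty)$ lets us discard every $\xi\ne0$. Subtracting the origin term $f(0)=P(0)$ gives the required inequality, and substituting it back yields \eqref{eq:Bexact}.

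There is no serious obstacle. The only points needing care are bookkeeping ones. First, we must check that $Z_\tau$ is exactly the $y=0$ fibre, which is the fibre $\Z v$ with $\delta_0=0$ and $w=0$, so that it is not double counted. Second, we must check that the termwise minorant is applied only to nonzero $y$, since only these satisfy the constraint region. Third, the replacement of $\widetilde\delta_y$ by $\delta_y$ is legitimate because $\vartheta_\tau$ and $\varphi_\tau$ are even. Finally, the identification of $v^\perp$ with $\R^3$ must be orthogonal so that the radial Fourier formula \eqref{eq:FT3} applies.
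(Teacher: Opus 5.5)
Your proposal is correct and follows essentially the same route as the paper. You rearrange the full-fibre identity, minorize each nonzero fibre weight termwise by the hypothesis (admissible by Lemma~\ref{lem:fibres}), and apply Poisson summation on $\Gamma_a$ with $\covol(\Gamma_a)=a^{-2}$, discarding the nonzero dual terms because $P^\sharp\ge0$. The paper's proof differs only in presentation: it puts the factor $e^{-\tau}$ inside $f$ and writes $e^\tau(\Theta(\alpha,L)-\Theta(\alpha,\D))-\mathcal B(\alpha,a)$ as an identity equal to the sum of these two nonnegative remainders.
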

\begin{proof}
The Schwartz function
$f(y)=e^{-\tau}e^{-\pi|y|^2}P(2\pi|y|^2)$ has nonnegative Fourier
transform by \eqref{eq:FT3}, with $f(0)=e^{-\tau}P(0)$ and
$\widehat f(0)=e^{-\tau}P^\sharp(0)$.
Poisson summation, $\covol(\Gamma_a)=a^{-2}$, and
\eqref{eq:fibreidentity} give
\begin{align*}
 &e^\tau\bigl(\Theta(\alpha,L)-\Theta(\alpha,\D)\bigr)
       -\mathcal B(\alpha,a)\\
 &\quad=e^\tau\sum_{y\in\Gamma_a\setminus\{0\}}
       \bigl[F_{\tau,\gamma,\eta}(|y|^2,\delta_y)-f(y)\bigr]
       +a^2e^\tau\sum_{\xi\in\Gamma_a^*\setminus\{0\}}
          \widehat f(\xi)\ge0.
\end{align*}
Both sums are nonnegative by \eqref{eq:potentialminorant}
and Fourier positivity, respectively.
\end{proof}

We use finitely many auxiliary triples on closed intervals
$[\tau_-,\tau_+]$ with rational endpoints $\tau_-<\tau_+$,
parametrized by
\begin{equation}\label{eq:sigma}
 \sigma=\frac{2\tau-\tau_--\tau_+}{\tau_+-\tau_-}\in[-1,1].
\end{equation}
The functions used are affine in $\sigma$:
\begin{align}\label{eq:affine}
 P_\sigma(u)&=\sum_{k=0}^N(b_{0k}+\sigma b_{1k})\ell_k(u),
 &P_\sigma^\sharp(u)&=\sum_{k=0}^N(-1)^k(b_{0k}+\sigma b_{1k})\ell_k(u),\notag\\
 \gamma_\sigma&=g_0+\sigma g_1,
 &\eta_\sigma&=e_0+\sigma e_1.
\end{align}
Let $\mathscr W$ be the family of thirty auxiliary triples specified
in Appendix~\ref{app:coefficients}. Every derivative used in the
interval bounds includes the dependence of these coefficients on
$\tau=\pi\alpha a$.

\begin{lemma}[Uniform properties of the auxiliary family]
\label{lem:witnesses}
Let $(P_\sigma,\gamma_\sigma,\eta_\sigma)$ be any member of
$\mathscr W$, on its closed window $[\tau_-,\tau_+]$.
For every $\tau$ in that window, with $\sigma$ defined by
\eqref{eq:sigma},
\begin{align}
 P_\sigma^\sharp(u)&>0 &&(u\ge0),\label{eq:Qsign}\\
 P_\sigma(u)+(1+u)^2&<0 &&(u\ge60),\label{eq:Ptail}\\
 |\gamma_\sigma|&\le4,
 &|\eta_\sigma|&\le1,\label{eq:multipliers}
\end{align}
and \eqref{eq:potentialminorant} holds for every admissible
$(w,\delta)$. Each window is contained in $[329/100,21]$.
\end{lemma}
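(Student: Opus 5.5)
The plan is a computer-assisted verification of the finitely many explicit triples in $\mathscr W$. The rational coefficients $b_{0k},b_{1k},g_0,g_1,e_0,e_1$ and the rational window endpoints are given in Appendix~\ref{app:coefficients}. Each claim in the lemma is then a statement about explicit functions of finitely many real variables on a compact box or a half-line, and I would handle each by exact rational arithmetic combined with interval bounds.

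\textbf{The elementary claims.} The window containment is a direct comparison of rational endpoints with $329/100$ and $21$. For \eqref{eq:multipliers}, $\gamma_\sigma$ and $\eta_\sigma$ are affine in $\sigma\in[-1,1]$. It therefore suffices to check $|g_0|+|g_1|\le4$ and $|e_0|+|e_1|\le1$.

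\textbf{Sign of $P_\sigma^\sharp$ and tail of $P_\sigma$.} Since $P_\sigma^\sharp$ and $P_\sigma$ are affine in $\sigma$, positivity for all $\sigma\in[-1,1]$ follows from positivity at the two endpoint polynomials $P^\sharp_{\pm1}$. I would expand each in the monomial basis using \eqref{eq:Lag3} and certify $P^\sharp_{\pm1}(u)>0$ on $[0,\infty)$. On a bounded interval $[0,U]$ this is done by Sturm sequences or subdivision with Taylor/interval bounds. Beyond $U$, the leading coefficient is positive and dominates: bound the lower-order terms by a Cauchy-type root bound. For \eqref{eq:Ptail}, apply the same argument to $P_{\pm1}(u)+(1+u)^2$ on $[60,\infty)$. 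This requires a negative leading coefficient of degree greater than two, plus a Cauchy bound and finite subdivision.

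\textbf{The minorant inequality \eqref{eq:potentialminorant}.} This is the main obstacle: a three-parameter inequality in $(\tau,w,\delta)$ involving the theta-type series $\vartheta_\tau,\varphi_\tau$. Multiply by $e^{\tau w}$ and write it as
\[
 G(\tau,w,\delta)=(1+\eta+(\gamma-\eta\tau)w)\vartheta_\tau(\delta)-(3\gamma+\eta\tau)\varphi_\tau(\delta)-e^{-\tau}e^{(\tau-\pi)w}P_\sigma(2\pi w)\ge0.
\]
I would split the admissible region in $w$.

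For large $w$, namely $2\pi w\ge60$, \eqref{eq:Ptail} gives $-P_\sigma(2\pi w)>(1+2\pi w)^2$. Hence the last term is a large positive contribution, of size at least $e^{-\tau}e^{(\tau-\pi)w}(1+2\pi w)^2$ when $\tau>\pi$. The bracket is at most linear in $w$: $\vartheta_\tau,\varphi_\tau$ are bounded uniformly for $\tau\ge 329/100$, and the multipliers are bounded by \eqref{eq:multipliers}. So positivity follows from an explicit analytic inequality, and that inequality can be reduced to finitely many checks. Care is needed where $\tau-\pi$ is small, near $\tau=329/100$. There I would use that the bracket's $w$-coefficient $\gamma-\eta\tau$ has a favorable sign on those windows, or push the cutoff higher and include the middle range in the box computation.

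For the compact range $3/4\le w\le 60/(2\pi)$, with $0\le\delta\le1/2$ and $w+\delta^2\ge1$, I would truncate $\vartheta_\tau,\varphi_\tau$ to $|k|\le K$. The tails are controlled by explicit geometric bounds of the form $\sum_{|k|>K}(1+k^2)e^{-\tau(k-1/2)^2}$ with $\tau\ge329/100$. Then I would run an adaptive interval branch-and-bound over boxes in $(\tau,w,\delta)$, using first-order Taylor enclosures with derivative bounds. These derivatives include the $\sigma$-dependence through $\partial\sigma/\partial\tau$. The delicate part is the boundary curve $w+\delta^2=1$, especially near $(w,\delta)=(3/4,1/2)$ and $(1,0)$, where the minorant is presumably designed to be nearly tight. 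There I would parametrize by $\delta$ with $w=1-\delta^2$ plus a nonnegative slack. I would then use monotonicity in $w$ where the $w$-derivative of $G$ has a certified sign, which reduces the check to the one-dimensional boundary, and refine boxes only there.
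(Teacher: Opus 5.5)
Your proposal is correct and follows essentially the same route as the paper. The paper also checks the signs, tail and multiplier bounds at the endpoints $\sigma=\pm1$ (Sturm sequences and rational comparison) and extends them to each window by affine interpolation. For the minorant it reduces to a truncated fibre-sum verification ($|k|\le5$, with explicit tail bounds) on subdivided boxes in the coordinates $w=1-\delta^2+x$ covering $w\le10$. It then treats $w\ge10$ analytically, using \eqref{eq:Ptail} together with $\vartheta_\tau<2$, $\varphi_\tau<1$, $|\gamma_\sigma|\le4$ and $|\eta_\sigma|\le1$.

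The extra care you anticipate near $\tau=329/100$ is unnecessary. The crude bound $e^{\tau+(\pi-\tau)w}[16+\tau+(8+2\tau)w]<9(20+15w)<(1+2\pi w)^2$ already holds for all $w\ge10$ and every window, so no sign information on $\gamma-\eta\tau$ is needed.
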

Appendix~\ref{subsec:potential-checks} proves Lemma~\ref{lem:witnesses}.
\begin{lemma}[A scalar lower-bound alternative]
\label{lem:finalcertificate}
For every $(\alpha,a)\in\mathscr B$, at least one of the following
inequalities holds:
\begin{align}
 8a&>e^{\pi\alpha a}M(\pi\alpha,\D),\label{eq:alternativeC}\\
 \mathcal B(\alpha,a)&>10^{-5}
 \quad\text{for an auxiliary triple in $\mathscr W$ valid at $\tau=\pi\alpha a$}.
                       \label{eq:alternativeB}
\end{align}
In the second alternative, $\mathcal B$ is formed from the indicated
auxiliary triple by \eqref{eq:Bexact}.
\end{lemma}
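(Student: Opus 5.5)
This is a finite scalar verification on the compact rectangle $\mathscr B$, and I would carry it out as a computer-assisted interval argument. Both alternatives depend only on $(\alpha,a)$, through $\tau=\pi\alpha a$, $t=\pi\alpha$ and the explicit reference series. The plan is to partition $\mathscr B$ into finitely many closed boxes and, on each box, certify one alternative uniformly. Continuity is not enough at the boundaries between boxes, so each certificate will be a strict inequality with an explicit margin.

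The first alternative \eqref{eq:alternativeC} is what I will use where $\alpha$ is moderately large. There $e^{\pi\alpha a}M(\pi\alpha,\D)$ is controlled through \eqref{eq:Dseries} differentiated in $\alpha$, which expresses $M(t,\D)$ as $24\sqrt2\sum_n n\,\sodd(n)e^{-\pi\sqrt2\alpha n}$. For $a$ close to $\sqrt2$ the factor $e^{\pi\alpha(a-\sqrt2)}$ stays bounded, so this alternative should cover a region with $\alpha$ near $5$ and shrink as $\alpha$ decreases. I bound the series by truncating at some $n_0$ and estimating the tail with $\sodd(n)\le n^2$ and a geometric sum. The truncated part is monotone decreasing in $\alpha$ and increasing in $a$ after multiplying by $e^{\pi\alpha a}$. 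Hence on a box $[\alpha_1,\alpha_2]\times[a_1,a_2]$ it suffices to check $8a_1>e^{\pi\alpha_2 a_2}M(\pi\alpha_1,\D)$ plus the tail, which is a finite rational/exponential comparison.

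On the rest of $\mathscr B$ (smaller $\alpha$, and $a$ away from $\sqrt2$) I use \eqref{eq:alternativeB}. For each box I pick a triple in $\mathscr W$ whose window $[\tau_-,\tau_+]$ contains the box's $\tau$-range. Such a triple exists because Lemma~\ref{lem:witnesses} places the windows in $[329/100,21]$, and \eqref{eq:taurange} shows that $\tau$ stays in that range on $\mathscr B$. Then $\mathcal B(\alpha,a)$ is an explicit function. The term $a^2P_\sigma^\sharp(0)-P_\sigma(0)$ is polynomial in $a$ and affine in $\sigma$. The term $e^\tau Z_\tau$ involves $\vartheta_\tau(0)$ and $\varphi_\tau(0)$, which are rapidly convergent for $\tau\ge 3.29$; I take $k=0,\pm1,\pm2$ explicitly and bound the tail. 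The reference term involves $\Theta(\alpha,\D)$ and $tM(t,\D)$, again via \eqref{eq:Dseries} with a tail bound, and $|\eta_\sigma|\le1$ from \eqref{eq:multipliers} controls its size. On each box I lower-bound $\mathcal B$ by its value at the box centre minus a derivative bound (first-order Taylor, or a Lipschitz constant from interval arithmetic) times the box radius. That derivative bound must include the chain-rule dependence of $\sigma$, $\gamma_\sigma$ and $\eta_\sigma$ on $\tau$, as emphasized before Lemma~\ref{lem:witnesses}. The box is certified if the result exceeds $10^{-5}$.

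The main obstacle is the interface between the two regimes, and more generally the places where $\mathcal B$ barely exceeds $10^{-5}$. The term $-e^\tau(1+\eta)\Theta(\alpha,\D)$ nearly cancels the positive contributions, so the margins there are tiny and the boxes must be fine. I would first run adaptive bisection in $(\alpha,a)$, halving any box that fails both tests. Where bisection stalls near the cancellation, I would switch from a Lipschitz bound to second-order Taylor bounds in $\tau$. A box that still fails would indicate that the window assignment in Appendix~\ref{app:coefficients} should be changed, for instance by choosing the adjacent triple where windows overlap. Rational arithmetic with outward rounding for every exponential makes the finite list of box checks rigorous.
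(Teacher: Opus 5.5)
Your scheme is the paper's: subdivide $\mathscr B$ into closed rational boxes and certify one alternative on each. Alternative \eqref{eq:alternativeC} is certified through a truncated reference moment plus a uniform tail bound. Alternative \eqref{eq:alternativeB} is certified through a truncated $\mathcal B$, a tail bound, and a mean-value estimate that carries the $\tau$-dependence of $P_\sigma,\gamma_\sigma,\eta_\sigma$ via $\partial_\alpha\tau=\pi a$ and $\partial_a\tau=\pi\alpha$.

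The step that fails as written is your claim that a valid triple exists for each box because the windows lie in $[329/100,21]$ and so does $\tau$. Containment of the windows in that interval does not mean they cover it, and they do not. The largest right endpoint in Appendix~\ref{app:coefficients} is $39/2$ (index $j=19$), whereas $\tau=\pi\alpha a$ reaches $5\pi\cdot133441/100000\approx20.96$ on $\mathscr B$.

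On the corner $\pi\alpha a>39/2$, which forces $\alpha>4.65$ and $a>1.24$, the second alternative is unavailable, so you must prove \eqref{eq:alternativeC} there. It does hold, but with a modest margin. The worst point is $a=133441/100000$, $\pi\alpha a=39/2$, where $8a\approx10.675$ against $e^{\tau}M(\pi\alpha,\D)\approx10.574$.

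Inside $[329/100,39/2]$, a box may use a triple only if its whole $\tau$-range lies in one closed window. This works only because consecutive windows overlap, so boxes straddling a window endpoint must be split until they fit in an overlap.

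Your forecast of the two regimes is also partly reversed. Alternative \eqref{eq:alternativeC} needs $\pi\alpha(\sqrt2-a)$ to exceed roughly $\log(3\sqrt2/a)$, so it favours large $\alpha$ and $a$ far from $\sqrt2$. It fails for every $a$ at $\alpha=1$, and for $a$ near $1.33$ it holds only when $\alpha\gtrsim4.6$. The projected bound is therefore needed at small $\alpha$ and for $a$ near the top of the range, not for $a$ away from $\sqrt2$.

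A second point is practical rather than logical. The $k=0$ term $(1+\eta_\sigma)e^{\tau}$ of $e^{\tau}Z_\tau$ cancels \emph{exactly} against the contribution $(1+\eta_\sigma)e^{\tau}$ from the constant term of $\Theta(\alpha,\D)$. The paper removes both symbolically before truncating, which is what produces \eqref{eq:Bfinite}.

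If you keep them, as your truncation at $k=0,\pm1,\pm2$ and your remark about near-cancellation suggest, each is of order $e^{\tau}$, up to roughly $3\cdot10^{8}$. Interval enclosures and derivative bounds of their difference then carry an error of order $e^{\tau}$ times the box width. Box refinement would then require absurdly small boxes. After the exact cancellation, every remaining term of $\mathcal B$ is $O(10^2)$, and your mean-value bounds become effective.
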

The proof in Appendix~\ref{subsec:scalar-checks} uses a finite
covering of $\mathscr B$ and the uniform truncation bounds of
Lemma~\ref{lem:scalartails}.

\begin{proof}[Proof of Theorem~\ref{thm:stationary-comparison}]
Theorem~\ref{prop:Dcap} and Proposition~\ref{prop:Acap} give the
assertion on the two trace neighborhoods. Outside them,
Corollary~\ref{cor:abound} gives $a(L)\le25/\sqrt{351}$, while
Proposition~\ref{prop:cusp} gives strict comparison if
$a(L)\le21/20$. For the remaining length range,
Lemma~\ref{lem:firstvariation} gives strict comparison at every
$\alpha\ge5$.

It remains to treat $1\le\alpha\le5$ and
$21/20<a=a(L)\le25/\sqrt{351}<133441/100000$.
Thus $(\alpha,a)\in\mathscr B$. Put $t=\pi\alpha$ and $\tau=ta$,
and apply Lemma~\ref{lem:finalcertificate}.
If \eqref{eq:alternativeC} holds, then
\eqref{eq:jointscale} and \eqref{eq:pairmoment} give
\[
 \Theta(\alpha,L)-\Theta(\alpha,\D)
 =t\bigl(M(t,L)-M(t,\D)\bigr)
 \ge t\bigl(8ae^{-\tau}-M(t,\D)\bigr)>0.
\]
If \eqref{eq:alternativeB} holds, the indicated auxiliary triple
satisfies the hypotheses of Lemma~\ref{lem:projectedLP} by
Lemma~\ref{lem:witnesses}. Hence
\[
 \Theta(\alpha,L)-\Theta(\alpha,\D)
 \ge e^{-\tau}\mathcal B(\alpha,a)>10^{-5}e^{-\tau}>0.
\]
The two alternatives cover $\mathscr B$. Equality occurs precisely
in the equality case of the local comparison at $\D$, namely when
$L$ is isometric to $\D$.
\end{proof}

\begin{proof}[Proof of Theorem~\ref{thm:main}]
Lemma~\ref{lem:attainment} gives a global minimizing pair with
$\alpha\ge1$. This pair is stationary, so
Theorem~\ref{thm:stationary-comparison} gives $\Phi(\alpha,L)\ge0$.
Consequently,
\[
 0\le\min_{\substack{\beta>0\\\covol(\Lambda)=1}}
       \Phi(\beta,\Lambda)\le\Phi(1,\D)=0.
\]
Thus $\Phi\ge0$ everywhere, which is \eqref{eq:main}.

Every pair satisfying equality is itself a global minimum.
For $\alpha\ge1$, stationarity and
Theorem~\ref{thm:stationary-comparison} give that $L$ is isometric
to $\D$. For $\alpha<1$, apply this conclusion to
$(\alpha^{-1},L^*)$ using \eqref{eq:defectduality}; isoduality of
$\D$ gives the same conclusion for $L$. Conversely, every
orthogonal image of $\D$ gives equality at every scale.
\end{proof}

\section{Epstein zeta functions and Gaussian mixtures}
\label{sec:epstein}\label{subsec:consequences}
In this section, we prove Theorem~\ref{thm:Epstein} by integrating
the theta comparison against the Mellin kernel. Positive
superposition gives the same conclusion for Gaussian mixtures in
Corollary~\ref{cor:mixtures}. Strictness at every scale yields
uniqueness in both cases.

\begin{proof}[Proof of Theorem~\ref{thm:Epstein}]
For $r>0$ and $s>0$, the Gamma integral gives
\begin{equation}\label{eq:Mellin-kernel}
 r^{-s}=\frac{\pi^s}{\Gamma(s)}
       \int_0^\infty\alpha^{s-1}e^{-\pi\alpha r}\,\dd\alpha.
\end{equation}
Summing over $L\setminus\{0\}$ and applying Tonelli's theorem gives,
for $s>2$,
\begin{equation}\label{eq:Mellin-Epstein}
 E(L,s)=\frac{\pi^s}{\Gamma(s)}
           \int_0^\infty\alpha^{s-1}\bigl(\Theta(\alpha,L)-1\bigr)\,\dd\alpha.
\end{equation}
For fixed $L$, exponential decay gives integrability at infinity.
At zero, Poisson summation gives
\[
 \Theta(\alpha,L)-1=\alpha^{-2}-1+
 O\!\left(\alpha^{-2}e^{-\pi a(L^*)/\alpha}\right),
 \qquad\alpha\downarrow0.
\]
Thus the integral converges for $s>2$.

Theorem~\ref{thm:main} makes the difference of the integrands
nonnegative, and strictly positive at every $\alpha>0$ unless
$L$ is isometric to $\D$. Integration proves
\eqref{eq:Epstein-main} and the equality statement.
\end{proof}

Write $\zeta_{\mathrm R}(s)=\sum_{n\ge1}n^{-s}$ for the Riemann zeta
function, $s>1$. For $s>2$, the shell expansion \eqref{eq:Dseries} gives
\begin{equation}\label{eq:reference-Epstein-product}
 E(\D,s)=24\,2^{-s/2}(1-2^{1-s})
                   \zeta_{\mathrm R}(s)\zeta_{\mathrm R}(s-1).
\end{equation}
Indeed, absolute convergence gives
\[
 \sum_{n\ge1}\frac{\sodd(n)}{n^s}
 =\zeta_{\mathrm R}(s)\sum_{\substack{d\ge1\\d\ \mathrm{odd}}}d^{1-s}
 =(1-2^{1-s})\zeta_{\mathrm R}(s)\zeta_{\mathrm R}(s-1).
\]

Monotone convergence extends the comparison to positive Gaussian mixtures.
\begin{corollary}[Positive Gaussian mixtures]
\label{cor:mixtures}
Let $\nu$ be a nonzero positive Borel measure on $(0,\infty)$, and set
\[
 f(r)=\int_{(0,\infty)}e^{-\pi\alpha r}\,\dd\nu(\alpha),\qquad r>0.
\]
Assume that $\sum_{v\in\D\setminus\{0\}}f(|v|^2)<\infty$.
For every covolume-one lattice $L\subset\R^4$,
\[
 \sum_{v\in L\setminus\{0\}}f(|v|^2)
 \ge\sum_{v\in\D\setminus\{0\}}f(|v|^2),
\]
where the left-hand side may be infinite. Equality holds precisely
when $L$ is isometric to $\D$.
\end{corollary}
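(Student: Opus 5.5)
The plan is to integrate the pointwise theta comparison of Theorem~\ref{thm:main} against the measure $\nu$. Tonelli's theorem will justify every interchange, since all integrands are nonnegative.

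\emph{Step 1 (interchange).} Because $e^{-\pi\alpha r}\ge0$, Tonelli's theorem gives, for every covolume-one lattice $L$ (with values in $[0,\infty]$),
\[
 \sum_{v\in L\setminus\{0\}}f(|v|^2)
 =\int_{(0,\infty)}\bigl(\Theta(\alpha,L)-1\bigr)\dd\nu(\alpha).
\]
The same identity holds for $\D$, where the left-hand side is finite by hypothesis.

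\emph{Step 2 (inequality).} Theorem~\ref{thm:main} gives $\Theta(\alpha,L)-1\ge\Theta(\alpha,\D)-1\ge0$ for every $\alpha>0$. Integrating against $\nu$ yields the stated inequality, and this remains valid when the left-hand side is infinite.

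\emph{Step 3 (equality).} If $L$ is isometric to $\D$, equality holds trivially. Otherwise Theorem~\ref{thm:main} makes
\[
 g(\alpha)=\Theta(\alpha,L)-\Theta(\alpha,\D)
\]
strictly positive at every $\alpha>0$. Suppose equality holds. Since the right-hand side is finite, both integrals are finite, and subtracting them gives $\int g\,\dd\nu=0$. As $g>0$ everywhere, this forces $\nu((0,\infty))=0$, contradicting $\nu\neq0$.

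The one point requiring care is this subtraction. It is legitimate only because finiteness of the $\D$ side together with equality makes the $L$ side finite as well. Nothing else in the argument is delicate, since no measurability issue arises: $g$ is continuous in $\alpha$ by Lemma~\ref{lem:sublevels}.
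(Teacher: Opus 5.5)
Your proof is correct and follows the paper's argument. The paper also writes the energy as $\int_{(0,\infty)}(\Theta(\alpha,L)-1)\dd\nu(\alpha)$, by monotone convergence rather than your Tonelli step, and integrates Theorem~\ref{thm:main}. It then gets strictness by subtraction in the finite-energy case, and in the infinite-energy case from finiteness of the reference energy, which is exactly your observation that equality forces the $L$ side to be finite.
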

\begin{proof}
Monotone convergence of the nonnegative finite lattice sums gives
\[
 \sum_{v\in L\setminus\{0\}}f(|v|^2)
   =\int_{(0,\infty)}\bigl(\Theta(\alpha,L)-1\bigr)\,\dd\nu(\alpha),
\]
with both sides possibly infinite. Theorem~\ref{thm:main} gives the
comparison. If $L$ is not isometric to $\D$ and has finite energy,
subtraction gives the integral of a strictly positive function
against the nonzero positive measure $\nu$. If its energy is
infinite, strictness follows from the finite reference energy.
\end{proof}

\appendix
\section{Proofs of the auxiliary estimates}
\label{sec:verification}\label{app:checks}
This appendix proves the auxiliary estimates used in the local and
global comparisons. The coefficients are specified in
Appendix~\ref{app:coefficients}; finite interval bounds and uniform
tail estimates establish the required inequalities.
\begin{theorem}[Uniform auxiliary estimates]
\label{prop:auxiliary-estimates}
The reference bounds \eqref{eq:refbound}--\eqref{eq:MD34}, the moment
estimates \eqref{eq:highband} and \eqref{eq:rootcert}, and the
short-vector estimate \eqref{eq:cusp-finite} hold on their respective
domains. The rational affine family $\mathscr W$ of Appendix~\ref{app:coefficients}
satisfies all the assertions of Lemma~\ref{lem:witnesses} and gives
the alternative in Lemma~\ref{lem:finalcertificate} at every point
of the closed rectangle $\mathscr B$.
\end{theorem}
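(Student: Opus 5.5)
The plan is to verify each assertion by rigorous interval arithmetic on a compact domain, together with explicit analytic tail bounds that reduce every infinite series or unbounded variable to a finite computation. The arguments share one template. First truncate each series (the shell series \eqref{eq:Dseries}, the fibre sums $\vartheta_\tau,\varphi_\tau$, the moment $M(t,\D)$) at a fixed index. Next bound the tail using $\sodd(n)\le n^2$ and geometric majorants, uniformly on the parameter box. Then cover the compact parameter domain by finitely many boxes. On each box, evaluate the truncated expression in outward-rounded interval arithmetic, or use a first-order Taylor bound with a derivative enclosure, and confirm the required strict sign.

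\textbf{Reference bounds.} For \eqref{eq:refbound} and \eqref{eq:MD34}, write
\[
 e^{\pi\sqrt2\alpha}(\Theta(\alpha,\D)-1)=24\sum_n\sodd(n)e^{-\pi\sqrt2\alpha(n-1)}.
\]
This is decreasing in $\alpha$, so its value at $\alpha=1$ is an upper bound for all $\alpha\ge1$. The $n=1,2,3$ terms give $24(1+3e^{-\pi\sqrt2}+\cdots)$, a bit above $24$, and a geometric tail bound gives $<243/10$. The moment bound is handled the same way. Here $M(\pi\alpha,\D)=24\sum_n\sqrt2 n\sodd(n)e^{-\pi\sqrt2\alpha n}$, and monotonicity reduces the claim to checking $\alpha=5$.

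\textbf{Moment estimates.} For \eqref{eq:highband} and \eqref{eq:rootcert}, the domains are compact subsets of eigenvalue space: $[2/3,5/3]^4$ with product one, and the polytope \eqref{eq:wrelax}. Both are symmetric, so I restrict to ordered $\lambda$. I then cover the domain by boxes and bound $\Hh_s$ from below by monotone interval enclosures of $m$, $V$, $\eta$. The difficulty is the degenerate corner at the identity, where both sides tend to $1$. There I would use a Taylor argument like the one in Lemma~\ref{lem:smallband}: the quadratic minorant of $e^{-u}$ together with $V+m^2\gtrsim m$ gives the inequality analytically on a neighbourhood $\max|\lambda_i-1|\le\varepsilon$. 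The box covering is then needed only on the complement.

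\textbf{Short-vector estimate.} For \eqref{eq:cusp-finite} and Lemma~\ref{lem:cuspdata}, the explicit polynomials must satisfy $P^\sharp\ge0$ and $S\ge0$ on $[0,\infty)$. I would expand each in the monomial basis with exact rationals and certify nonnegativity by Sturm sequences or a sum-of-squares decomposition. Then I would check $S-2S'\ge0$ on the indicated interval, and check \eqref{eq:cuspminorant} against $(243/10)e^{-\pi\sqrt2\alpha}+1/4000$ on $[1,9/4]$ by interval subdivision in $\alpha$.

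\textbf{Auxiliary family.} For Lemma~\ref{lem:witnesses}, positivity \eqref{eq:Qsign} is jointly polynomial in $(\sigma,u)$ and affine in $\sigma$. It therefore suffices to check $\sigma=\pm1$, each a single-variable polynomial certificate. The tail condition \eqref{eq:Ptail} follows from the sign of the leading coefficient plus a Cauchy-type root bound below $60$. The multiplier bounds \eqref{eq:multipliers} are endpoint checks.

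\textbf{Main obstacle.} I expect the hardest step to be the minorant inequality \eqref{eq:potentialminorant}, which runs over the unbounded three-parameter region $(\tau,w,\delta)$. For $2\pi w\ge60$, \eqref{eq:Ptail} gives $P<-(1+u)^2$. Since $|\gamma|\le4$ and $|\eta|\le1$, the right side satisfies $F\ge -C(1+w)e^{-\tau w}$, which dominates the left side once $w$ is large enough. Only $w\le 60/(2\pi)$ remains, where I would box-cover $(\tau,w,\delta)$ using derivative enclosures of $\vartheta_\tau,\varphi_\tau$; the region $w$ near $3/4$ with $\delta$ near $1/2$ is tight.

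\textbf{Scalar alternative.} Finally, Lemma~\ref{lem:finalcertificate} needs a finite covering of $\mathscr B$. On each box I would bound $\mathcal B(\alpha,a)$ or $8a-e^{\pi\alpha a}M$ using Lipschitz constants obtained from the truncation bounds.
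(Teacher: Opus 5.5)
Your plan is, in outline, the paper's proof. It uses truncation with geometric tail bounds, exact Sturm certificates at the endpoints of each affine family (extended to the whole family by convexity), and box subdivision with interval, mean-value or Taylor enclosures. It handles the degenerate corner of the moment inequalities by an analytic near-identity estimate of the type of Lemma~\ref{lem:smallband}, and large $w$ by \eqref{eq:Ptail}. The one structural difference is in Lemma~\ref{lem:highband}. You cover the three-dimensional constraint set directly, whereas the paper first uses a Lagrange-multiplier argument to confine the constrained minimum to the three two-parameter surfaces \eqref{eq:highsurfaces}. Your route avoids the multiplier argument at the price of a larger subdivision.

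Several of the concrete steps you name would, however, fail as stated.

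First, $\sodd(2)=1$, not $3$. At $\alpha=1$ the correct value is $24(1+q+4q^2+q^3+6q^4+\cdots)\approx24.2956$ with $q=e^{-\pi\sqrt2}\approx0.01176$. The margin below $243/10$ is therefore only about $4\cdot10^{-3}$, and your $24(1+3q)\approx24.85$ would already violate \eqref{eq:refbound}.

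Second, no Cauchy-type bound places the roots of $P_\sigma+(1+u)^2$ below $60$. The leading monomial coefficient carries the factor $1/(3/2)_N$, about $8\cdot10^{-7}$ for $N=9$. For the window $j=0$ the Cauchy bound is therefore of order $10^9$. Even Fujiwara's bound exceeds $60$, since the ratio of the two top monomial coefficients is about $95$. You need an exact root count on $[60,\infty)$: a Sturm sequence as in the paper, or Budan--Fourier after substituting $u=60+v$.

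Third, a box cover in $(\tau,w,\delta)$ leaves boxes straddling the curve $w+\delta^2=1$, and these contain inadmissible points. You expect the minorant to be tight near $(w,\delta)=(3/4,1/2)$, which lies on that curve, so such boxes need not certify however fine they are. The paper straightens the constraint by $w=1-\delta^2+x$ with $x\in[0,37/4]$. This makes the admissible set a box, with the Jacobian of the substitution entering the Taylor bounds.

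Finally, in Lemma~\ref{lem:cuspdata} the node certificates pass to the interpolated polynomial at $\alpha\in[a,b]$ only if $S-2S'$ is certified at both nodes on an interval containing $[0,(21\pi/10)b]$. Checking the left node only on $[0,(21\pi/10)a]$ is not enough, which is why the paper uses $[0,33b/5]$ at both nodes.
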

We reduce the assertions to exact polynomial signs and finite-sum
inequalities on closed boxes. The local estimate near the identity
handles the vanishing denominators in the shell bounds; uniform
tail estimates extend the finite-sum comparisons to the full series.
Lemma~\ref{lem:audit-cover} gives the covering argument for the
projected minorants and the scalar alternative.
Throughout, $t=\pi\alpha$ and $\tau=ta$; when a lattice is fixed,
$a=a(L)$.

\subsection{Interval bounds and polynomial signs}\label{subsec:arithmetic}\label{app:data}
Every prescribed initial box is included. Rational midpoint
bisection retains both closed children, and the complete subdivision
is checked. A shell box is accepted only by strict infeasibility,
applicability of the local lemma throughout its feasible part, or
a uniform interval bound. For an auxiliary triple, its closed window
must contain the entire $\pi\alpha a$-range of the assigned box.

Exact input checks match the printed coefficients with the
verification data. The paper specifies the domains, terminal
inequalities, and analytic error bounds; the embedded archive
contains the complete subdivisions, programs, and execution records.

For a closed box $B=\prod_i[c_i-r_i,c_i+r_i]$ and a function $g$
of class $C^1$ on a neighborhood of $B$, the mean-value estimate gives
\begin{equation}\label{eq:meanvaluecheck}
 g(x)\ge g(c)-\sum_i r_i\sup_B|\partial_i g|,\qquad x\in B.
\end{equation}
If $g$ is of class $C^2$, Taylor's theorem also gives
\begin{equation}\label{eq:Taylorcheck}
 g(x)\ge g(c)-\sum_i r_i|\partial_i g(c)|
 -\frac12\sum_{i,j}r_ir_j\sup_B|\partial_i\partial_jg|,
 \qquad x\in B.
\end{equation}
The double sum is over ordered pairs. A derivative of certified
fixed sign on the current box reduces the minimum to a coordinate
face. Outward rounding bounds $g(c)$ below and the absolute
derivatives above; every divisor interval excludes zero.
Derivatives include all parameter dependence of the finite
expressions. The analytic truncation errors are subtracted afterward.

Exact Sturm sequences establish the endpoint signs in
Lemmas~\ref{lem:short-vector-endpoints} and~\ref{lem:projected-endpoints}.
Affine interpolation preserves these signs and the multiplier bounds.
Exponential enclosures use
\[
 \left|e^y-\sum_{j=0}^{24}\frac{y^j}{j!}\right|
 \le\frac{2}{16^{25}25!},\qquad |y|\le\frac1{16},
\]
with reduction by powers of two. Machin's identity and alternating
arctangent series enclose $\pi$; integer arithmetic verifies rational
bounds for square and fourth roots.

The finite inequalities are verified with $75$-digit decimal
intervals, using both correctly rounded elementary-function
enclosures and the explicit Taylor bound above. A separate
verification uses exact rational polynomial arithmetic and
$192$-bit MPFR intervals with directed rounding~\cite{MPFR}.
Both implementations cover the same domains with the same
acceptance conditions and truncation margins; their subdivisions
may differ.

\subsection{Reference, short-vector, and shell estimates}\label{subsec:reference-checks}
\label{subsec:spectral-checks}\label{subsec:root-checks}
\begin{proof}[Proof of Lemma~\ref{lem:reference}]
Put $q=e^{-\pi\sqrt2\alpha}$. For $\alpha\ge1$, $q<1/85$;
the first six shells and $\sodd(n)\le n^2\le49\,2^{n-7}$ for
$n\ge7$ give
\[
 e^{\pi\sqrt2\alpha}\bigl(\Theta(\alpha,\D)-1\bigr)
 \le24\left(1+q+4q^2+q^3+6q^4+4q^5+
                   \frac{49q^6}{1-2q}\right)<\frac{243}{10}.
\]
For $\alpha\ge5$, $q<10^{-4}$ gives
\[
 e^{\pi\sqrt2\alpha}M(\pi\alpha,\D)
 \le24\sqrt2\sum_{n\ge1}n^3q^{n-1}
 =24\sqrt2\frac{1+4q+q^2}{(1-q)^4}<34.
\]
These are \eqref{eq:refbound} and \eqref{eq:MD34}.
\end{proof}

Proposition~\ref{prop:cusp} uses the following polynomial estimate.
\begin{lemma}[Short-vector auxiliary polynomials]
\label{lem:cuspdata}
There is a piecewise-affine family
$\alpha\mapsto P_\alpha$, $\alpha\in[1,9/4]$, of real polynomials
of degree at most nine, expressed in the basis $\ell_k^{(4)}$,
such that $S_\alpha=1-P_\alpha$ satisfies
\[
 \begin{aligned}
 S_\alpha(u)&>0,\quad P_\alpha^\sharp(u)>0 &&(u\ge0),\\
 S_\alpha(u)-2S_\alpha'(u)&>0 &&(0\le u\le(21\pi/10)\alpha).
 \end{aligned}
\]
For this family,
\begin{multline}\label{eq:cusp-finite}
 \alpha^{-2}P_\alpha^\sharp(0)-P_\alpha(0)
 +2e^{-(21\pi/20)\alpha}S_\alpha((21\pi/10)\alpha)
 -\frac{243}{10}e^{-\pi\sqrt2\alpha}>\frac1{4000},\\
 1\le\alpha\le9/4.
\end{multline}
\end{lemma}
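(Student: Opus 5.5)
This is an existence statement about explicit auxiliary polynomials, so the plan is constructive followed by certified verification. For each node $\alpha_j$ of a finite rational mesh $1=\alpha_0<\alpha_1<\dots<\alpha_J=9/4$, I will produce a polynomial $P_{\alpha_j}=\sum_{k\le9}b_k^{(j)}\ell_k^{(4)}$ with rational coefficients and interpolate affinely in $\alpha$ on each $[\alpha_{j-1},\alpha_j]$. To find the coefficients I would solve, at each node, a linear program in the $b_k$. The objective is to maximize $\alpha^{-2}P^\sharp(0)-P(0)+2e^{-(21\pi/20)\alpha}S((21\pi/10)\alpha)$. The constraints are $1-P\ge0$, $P^\sharp\ge0$ on a fine grid of $[0,\infty)$ with a large-$u$ cutoff, and $S-2S'\ge0$ on a grid of $[0,(21\pi/10)\alpha]$. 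I would then round to rationals and shrink slightly toward strict positivity. This is the Cohn--Elkies-type construction behind \eqref{eq:cuspidentity}. It has a natural starting point: $P$ should nearly vanish at $u=0$ relative to $P^\sharp(0)/\alpha^2$, and $S$ should be close to $1$ near the shortest-pair location.

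For the sign conditions, each is linear in $P$. On each affine piece the family is $P_\alpha=(1-\lambda)P_{\alpha_{j-1}}+\lambda P_{\alpha_j}$ with $\lambda\in[0,1]$. Hence $S_\alpha>0$ and $P_\alpha^\sharp>0$ on $[0,\infty)$ follow from the same strict inequalities at the two endpoint polynomials, since a convex combination of positive functions is positive. I would certify these endpoint positivity statements on $[0,\infty)$ with exact Sturm sequences for the degree-nine rational polynomials, as announced in Appendix~\ref{subsec:arithmetic}; the leading coefficients give positivity at infinity.

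The condition $S_\alpha-2S'_\alpha>0$ is also linear in $P$, but its interval $[0,(21\pi/10)\alpha]$ moves with $\alpha$. I would therefore certify it at both endpoint polynomials on the larger interval $[0,(21\pi/10)\alpha_j]$, again by Sturm counts after replacing $21\pi/10$ by a rational upper bound. Convexity then covers the whole piece.

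The main obstacle is the scalar inequality \eqref{eq:cusp-finite}. It is not affine in $\alpha$, because of the factors $\alpha^{-2}$, $e^{-(21\pi/20)\alpha}$, $e^{-\pi\sqrt2\alpha}$ and the moving evaluation point of $S_\alpha$. Its margin $1/4000$ is small, and it is tightest near $\alpha=1$, where the reference term $(243/10)e^{-\pi\sqrt2}$ is largest. I would handle it by rational bisection of $[1,9/4]$ compatible with the mesh. On each subbox I would apply the mean-value bound \eqref{eq:meanvaluecheck}, or the Taylor bound \eqref{eq:Taylorcheck}, to $g(\alpha)$ equal to the left side of \eqref{eq:cusp-finite} minus $1/4000$. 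The center values come from outward-rounded interval arithmetic with the stated exponential and $\pi$ enclosures. The derivative bounds use explicit bounds on the coefficients and their $\alpha$-derivatives within each affine piece. If the margin fails on some piece, I would first refine the mesh near $\alpha=1$ and re-optimize there before increasing the degree.
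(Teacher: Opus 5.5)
Your plan is essentially the paper's proof: the paper fixes twelve explicit degree-nine rational polynomials at the nodes $1,17/16,9/8,10/8,\dots,18/8$. It certifies $S_\alpha>0$ and $P_\alpha^\sharp>0$ on $[0,\infty)$, and $S_\alpha-2S_\alpha'>0$ on $[0,33b/5]\supset[0,(21\pi/10)\alpha]$ for each cell $[a,b]$, at the endpoint polynomials by exact Sturm sequences, then transfers these by affine interpolation. It proves \eqref{eq:cusp-finite} on a rational covering of each cell by direct enclosure or the mean-value bound \eqref{eq:meanvaluecheck}.

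The only difference is that you describe how such coefficients could be found (by linear programming), whereas the paper simply supplies them. Your fallback of raising the degree would have to be avoided, since the statement requires degree at most nine.
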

\begin{proof}
Use the twelve rational polynomials of
Lemma~\ref{lem:short-vector-endpoints}, at the scale nodes
\begin{equation}\label{eq:cusp-nodes}
 1,\quad17/16,\quad9/8,10/8,\ldots,18/8.
\end{equation}
Linear interpolation preserves the signs, since
Lemma~\ref{lem:short-vector-endpoints} gives them on a common interval
containing $[0,(21\pi/10)\alpha]$ throughout each cell. The supplied
rational intervals have consecutive endpoints and cover every
interpolation cell. On each interval, direct enclosure or
\eqref{eq:meanvaluecheck} proves \eqref{eq:cusp-finite}, including
both endpoints.
\end{proof}

\begin{proof}[Proof of Lemma~\ref{lem:highband}]
The case $u=\max_i\lambda_i=1$ is the identity. Fix
$1<u\le5/3$ with nonempty constraint set and minimize
$\Hh_{44/5}(m,V,\eta)-1-m/10$ over three eigenvalues in $[2/3,u]$
with product $1/u$. Then $\eta=u-\overline\lambda>0$, since equality
forces every eigenvalue to equal $u$. Compactness and smoothness give
a minimum.

As $u\downarrow1$, the product constraint gives
$u^{-3}\le\lambda_i\le u$, so a neighborhood of the identity is
covered by Lemma~\ref{lem:smallband} and convexity in $s$.
On the remaining terminal boxes, positivity of $\eta$ is verified
before division.
At fixed $m,u$, writing $\eta=u-m-1$,
\begin{equation}\label{eq:HV}
 \frac{\partial\Hh_s}{\partial V}
 =\frac{e^{-s(m+\eta)}\eta^2}{(\eta^2+V)^2}
       \bigl(e^y(y-1)+1\bigr)>0,
 \qquad y=s(\eta+V/\eta)>0.
\end{equation}
The positivity follows by differentiating $e^y(y-1)+1$.
For the constrained minimum, write
\[
 \mathcal F(m,V;u)=\Hh_s(m,V,u-m-1)-1-\frac{m}{10},
 \qquad s=44/5.
\]
Here $\mathcal F_m$ includes the dependence of $\eta=u-m-1$ on $m$,
and $\mathcal F_V=\partial_V\Hh_s>0$. Since $\partial_i m=1/4$
and $\partial_i V=(\lambda_i-\overline\lambda)/6$, the constraint
$\sum_{i=2}^4\log\lambda_i=-\log u$ gives the multiplier equation
\[
 \frac{\mathcal F_V}{6}\lambda_i^2+
 \left(\frac{\mathcal F_m}{4}
       -\frac{\mathcal F_V\overline\lambda}{6}\right)\lambda_i
 =\nu,\qquad i=2,3,4.
\]
The constraint gradient is nonzero; at an interior minimum, two free
eigenvalues agree because all three solve a nonzero quadratic.
At a boundary minimum, one is $2/3$ or $u$. Thus, for every fixed
maximum $u$, the minimum is attained on one of the following
surfaces, up to permutation:
\begin{equation}\label{eq:highsurfaces}
 (u,x,x,(ux^2)^{-1}),\quad
 (u,2/3,x,3/(2ux)),\quad
 (u,u,x,(u^2x)^{-1}).
\end{equation}
Parameterize each surface by $u=1+2r/3$ and $x=2/3+t$,
with $(r,t)\in[0,1]^2$. The full-square subdivision includes exact
interval exclusions outside the spectral constraints $\lambda_i\in[2/3,u]$.

For $m,V,\eta>0$, multiplication by the positive denominator converts
\eqref{eq:highband} to
\begin{equation}\label{eq:Kspectral}
 K_s=\eta^2(e^{sV/\eta}-1)+V(e^{-s\eta}-1)
 -(\eta^2+V)(e^{sm}-1)
 -\frac{m}{10}(\eta^2+V)e^{sm}\ge0,
 \quad s=44/5.
\end{equation}
On each terminal box of the supplied subdivisions of $[0,1]^2$,
the recorded bounds establish a strict spectral exclusion, the
hypotheses of Lemma~\ref{lem:smallband}, or a positive mean-value
lower bound for \eqref{eq:Kspectral}. In the second case, convexity
in $s$ gives the required estimate at $s=44/5$. The closed boxes
cover the feasible parts of all three surfaces. Since each
constrained minimum occurs there, the estimate holds throughout
the spectral band.
\end{proof}

For the root shell, use $\delta_v\le m+\eta$ from \eqref{eq:rootstats}.
\begin{proof}[Proof of Lemma~\ref{lem:rootcert}]
Order the three eigenvalues other than $u$ as $x\le y\le z$ and set
$z=4-u-x-y$. There are two initial boxes,
\[
 [1,48/37]\times[26/37,166/111]^2,
 \qquad [48/37,166/111]\times[26/37,166/111]^2.
\]
A box is discarded only if it strictly violates the eigenvalue, ordering, or product constraints. If all $\mu w_i$ belong to $[9/10,11/10]$, then $u<48/37$ and
$m+\eta=\mu u-1\le1/10$, so Lemma~\ref{lem:smallband} applies.

On remaining boxes, set $\eta=\mu(u-1)$ in the first domain and
$\eta=11\mu/37$ in the second; these smooth branches agree at $u=48/37$.
Check $w_i>0$ and $\eta>0$ throughout each box, then bound
\eqref{eq:Kspectral} below by the mean-value estimate, with $s=22/5$
and $m/200$ replacing $m/10$. The bound is strictly positive.
The identity case $u=1$ uses the local estimate, avoiding division by zero.

The supplied rational subdivisions cover both initial boxes, and
the recorded bounds verify one of these conditions on every terminal
box. The covering argument therefore proves \eqref{eq:rootcert}
throughout \eqref{eq:wrelax}.
\end{proof}

\subsection{Projected minorants and the scalar alternative}\label{subsec:potential-checks}
\label{subsec:scalar-checks}\label{app:derivatives}
For the triples of Lemma~\ref{lem:projected-endpoints}, we prove
the fibre minorization and the scalar alternative.

\begin{proof}[Proof of Lemma~\ref{lem:witnesses}]
Lemma~\ref{lem:projected-endpoints} gives the signs and bounds at
$\sigma=\pm1$; the polynomials' and multipliers' affine dependence
extends them throughout each window.

Multiplying \eqref{eq:potentialminorant} by $e^{\tau+\pi w}$ reduces
the comparison to a lower bound for
\begin{equation}\label{eq:Gexact}
 G=e^{\tau+(\pi-\tau)w}
 \bigl[A\vartheta_\tau(\delta)-C\varphi_\tau(\delta)\bigr]
 -P_\sigma(2\pi w),
\end{equation}
where $A=1+\eta+(\gamma-\eta\tau)w$ and
$C=3\gamma+\eta\tau$. Parameterize the complete shortest-vector
constraint by
\begin{equation}\label{eq:potentialbox}
 -1\le\sigma\le1,\quad0\le\delta\le\tfrac12,
 \quad0\le x\le\tfrac{37}{4},\qquad w=1-\delta^2+x.
\end{equation}
This contains all admissible pairs with $w\le10$ and some with
$10<w\le41/4$. Retaining $-5\le k\le5$ in both sums defines $G_5$.
Uniformly for $\tau\ge329/100$ and admissible shifts,
\begin{equation}\label{eq:fibretail}
 \sum_{|k|\ge6}e^{-\tau(k+\delta)^2}
 \le4e^{-(329/100) 121/4},\qquad
 \sum_{|k|\ge6}(k+\delta)^2e^{-\tau(k+\delta)^2}
 \le121e^{-(329/100) 121/4}.
\end{equation}
On each omitted half-fibre, the absolute coordinate starts at
$z_0\ge11/2$ and increases by one. Both summands decrease, with
ratios at most $(13/11)^2e^{-12(329/100)}<1/2$; twice the first
term gives \eqref{eq:fibretail}.
On \eqref{eq:potentialbox}, $|A|<259$, $|C|\le33$,
and $\tau+(\pi-\tau)w<8$; hence \eqref{eq:fibretail} gives
\begin{equation}\label{eq:Gtail}
 |G-G_5|\le
 e^8(4\cdot259+121\cdot33)e^{-(329/100)121/4}<10^{-30}.
\end{equation}
It therefore suffices to verify
\begin{equation}\label{eq:Gcertificate}
 G_5>10^{-30}\quad\text{throughout the domain \eqref{eq:potentialbox}}.
\end{equation}
For each of the thirty triples, the supplied rational subdivision
covers \eqref{eq:potentialbox}. On every terminal box, direct
enclosure, \eqref{eq:Taylorcheck}, or reduction to a minimizing face
gives \eqref{eq:Gcertificate}. The tail bound \eqref{eq:Gtail} then
yields $G>0$ on every admissible fibre with $w\le10$.

For $w\ge10$, Poisson summation and a geometric series give
$\vartheta_b(\delta)\le\vartheta_b(0)<2$ for $b\ge3/2$.
With $z^2e^{-\tau z^2}\le[2/(e\tau)]e^{-\tau z^2/2}$,
we obtain $\vartheta_\tau(\delta)<2$ and $\varphi_\tau(\delta)<1$
for admissible $\tau$, hence
\begin{equation}\label{eq:infiniteradial}
 \left|e^{\tau+(\pi-\tau)w}(A\vartheta_\tau-C\varphi_\tau)\right|
 \le e^{\tau+(\pi-\tau)w}[16+\tau+(8+2\tau)w]
 <9(20+15w).
\end{equation}
For $w\ge10$, the middle expression decreases with $\tau$;
evaluation at $\tau=329/100$ gives the last bound.
Since $2\pi w>60$ and $(1+2\pi w)^2>9(20+15w)$ for $w\ge10$,
\eqref{eq:Ptail} gives $G>0$ on the remaining radial range.
\end{proof}

For the finite derivatives, use
\[
 m_r(\tau,\delta)=\sum_{k=-5}^{5}(k+\delta)^r
 e^{-\tau(k+\delta)^2},\qquad
 \partial_\tau m_r=-m_{r+2},\qquad
 \partial_\delta m_r=r m_{r-1}-2\tau m_{r+1}.
\]
Here $r$ is a nonnegative integer, with the first term omitted at $r=0$.
For $A,C$ in \eqref{eq:Gexact},
\[
 G_5=e^{\tau+(\pi-\tau)w}(A m_0-Cm_2)-P_\sigma(2\pi w).
\]
The recurrences through $m_6$ and the product and chain rules give
all first and second derivatives, including the $\tau$-dependence
of $P_\sigma$, $\gamma_\sigma$, and $\eta_\sigma$, where
$d\sigma/d\tau=2/(\tau_+-\tau_-)$.

For the coordinate change, put $h=(\tau_+-\tau_-)/2$ and
\[
 \chi(\sigma,\delta,x)
 =\bigl((\tau_-+\tau_+)/2+h\sigma,\delta,1-\delta^2+x\bigr),
 \qquad
 J_\chi=\begin{pmatrix}h&0&0\\0&1&0\\0&-2\delta&1\end{pmatrix}.
\]
Writing $\widetilde G_5=G_5\circ\chi$ and $e_2=(0,1,0)^T$, we have
\[
 \nabla\widetilde G_5=J_\chi^T\nabla G_5,\qquad
 D^2\widetilde G_5=J_\chi^T(D^2G_5)J_\chi
                     -2(G_5)_w e_2e_2^T,
\]
with derivatives of $G_5$ evaluated at $\chi(\sigma,\delta,x)$.
Use these in \eqref{eq:Taylorcheck}, enclosing polynomials and their
derivatives by exact finite Taylor expansions at rational midpoints.

Fix a triple whose window contains $\tau=\pi\alpha a$.
In \eqref{eq:Bexact}, the $k=0$ term of $Z_\tau$ cancels the
constant term of $(1+\eta_\sigma)\Theta(\alpha,\D)$. Truncating to
$1\le k\le5$ in the zero-fibre sum and $1\le n\le20$ in the
reference series gives
\begin{align}\label{eq:Bfinite}
 \mathcal B_{5,20}(\alpha,a)={}&a^2P_\sigma^\sharp(0)-P_\sigma(0)\notag\\
 &+2\sum_{k=1}^5[1+\eta_\sigma-(3\gamma_\sigma+
                      \eta_\sigma\tau)k^2]e^{\tau(1-k^2)}\notag\\
 &+24\sum_{n=1}^{20}\sodd(n)
       [\eta_\sigma t\sqrt2\,n-1-\eta_\sigma]e^{\tau-t\sqrt2\,n},
       \qquad t=\pi\alpha.
\end{align}
\begin{lemma}[Uniform truncation bounds]
\label{lem:scalartails}
Fix $(\alpha,a)\in\mathscr B$ and an auxiliary triple whose window
contains $\tau=\pi\alpha a$. The exact lower bound and its finite
approximation in \eqref{eq:Bfinite} satisfy
\begin{equation}\label{eq:scalartail}
 |\mathcal B(\alpha,a)-\mathcal B_{5,20}(\alpha,a)|<10^{-24}.
\end{equation}
The estimate is uniform over all such choices. Independently of the
choice of an auxiliary triple, for every $(\alpha,a)\in\mathscr B$
the positive remainder after $n=20$ in
\[
 e^\tau M(t,\D)=24\sum_{n\ge1}\sodd(n)\sqrt2\,n\,e^{\tau-t\sqrt2\,n}
\]
is smaller than $10^{-24}$, where $t=\pi\alpha$ and $\tau=ta$.
\end{lemma}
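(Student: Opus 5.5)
The plan is to write $\mathcal B-\mathcal B_{5,20}$ explicitly as a sum of omitted tails and bound each tail geometrically, using only the uniform parameter bounds available on $\mathscr B$: $1\le\alpha\le5$, $21/20\le a\le 1.335$, $t=\pi\alpha\in[\pi,5\pi]$, $\tau\in[329/100,21]$, and $|\gamma_\sigma|\le4$, $|\eta_\sigma|\le1$ from \eqref{eq:multipliers}.

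First expand \eqref{eq:Bexact}. The term $e^\tau Z_\tau$ equals $e^\tau\sum_{k\in\Z}[1+\eta-(3\gamma+\eta\tau)k^2]e^{-\tau k^2}$. Its $k=0$ term $e^\tau(1+\eta)$ cancels the constant $1$ in $(1+\eta)\Theta(\alpha,\D)$; there is no constant in $M(t,\D)$. Using \eqref{eq:Dseries} and $M(t,\D)=24\sum_n\sodd(n)\sqrt2 n e^{-t\sqrt2 n}$, we get $\mathcal B=\mathcal B_{5,20}+T_1+T_2$. Here
\[
 T_1=2\sum_{k\ge6}[1+\eta-(3\gamma+\eta\tau)k^2]e^{\tau(1-k^2)},\qquad
 T_2=24\sum_{n\ge21}\sodd(n)[\eta t\sqrt2 n-1-\eta]e^{\tau-t\sqrt2 n}.
\]

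For $T_1$, bound the bracket by $2+(12+21)k^2\le 35k^2$. Then $|T_1|\le70\sum_{k\ge6}k^2e^{-\tau(k^2-1)}$. With $\tau\ge3.29$, consecutive ratios are below $(7/6)^2e^{-13\cdot3.29}$, which is tiny. So $|T_1|\le 2\cdot70\cdot36e^{-35\cdot3.29}\approx 5040e^{-115}$, which is far below $10^{-40}$.

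$T_2$ is the main quantity to control. Use $\sodd(n)\le n^2$ and the bracket bound $\le t\sqrt2 n+2\le 23n$. The exponent is $\tau-t\sqrt2 n=t(a-\sqrt2 n)\le -t(\sqrt2 n-1.335)$. With $t\ge\pi$ and $n\ge21$, this is at most $-\pi(29.69-1.34)\approx-89$. The ratio of consecutive terms is at most $(22/21)^3e^{-\pi\sqrt2}<1/50$. Hence $|T_2|\le 24\cdot23\cdot21^3\cdot e^{-89}\cdot\tfrac{50}{49}$, which is about $5\cdot10^6e^{-89}<10^{-30}$. Summing gives $|T_1|+|T_2|<10^{-24}$, which is \eqref{eq:scalartail}.

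The second claim, the remainder of $e^\tau M(t,\D)$ after $n=20$, is the same tail without $\eta$. It has summand $24\sodd(n)\sqrt2 n e^{\tau-t\sqrt2n}\le 34n^3e^{-t(\sqrt2n-a)}$, so the identical geometric estimate gives a bound below $10^{-30}$. All constants depend only on the box bounds and on \eqref{eq:multipliers}, so the estimates are uniform in the choice of triple.

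The only delicate point is keeping track of signs and the cancellation at $k=0$. The numerical margins are enormous, so no step is a real obstacle beyond verifying that the ratio bounds hold at the worst corner of the box, namely $\tau=329/100$ for $T_1$ and $t=\pi$, $a=1.335$ for $T_2$.
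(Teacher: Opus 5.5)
Your proposal is correct and follows the paper's proof essentially step for step: you use the same cancellation of the $k=0$ term of $e^\tau Z_\tau$ against the constant in $(1+\eta)\Theta(\alpha,\D)$, split the error into the zero-fibre tail $k\ge6$ and the reference tail $n\ge21$, and bound both using \eqref{eq:multipliers}, $\sodd(n)\le n^2$, and a geometric consecutive-ratio estimate. The only differences are cosmetic numerical constants; for example, you use $t\ge\pi$ and a bracket bound of $23n$, where the paper uses $t>3$ and $26n$.
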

\begin{proof}
Every auxiliary triple satisfies \eqref{eq:multipliers}, and
$329/100<\tau<21$ on \eqref{eq:closedrectangle}.
The omitted zero-fibre terms have absolute sum at most
\[
 2\sum_{k\ge6}(2+33k^2)e^{-\tau(k^2-1)}
 \le4760e^{-(329/100)35}<10^{-30}.
\]
The consecutive-term ratio is at most
$(7/6)^2e^{-13(329/100)}<1/2$, so twice the first term bounds the
series.
For the reference tail,
$|\eta t\sqrt2\,n-1-\eta|\le26n$ since $t<16$ and $\sqrt2<3/2$.
Using $\sodd(n)\le n^2$, $a<27/20$, $\sqrt2>7/5$, and $t>3$, gives
an absolute term bound
\[
 624n^3\exp\left[-3\left(\frac75n-\frac{27}{20}\right)\right].
\]
The ratio is at most $(22/21)^3e^{-21/5}<1/2$ and the first exponent
is less than $-84$, giving $1248\cdot21^3e^{-84}$ for the tail and less than
$10^{-24}$ for both tails together. Since
$24\sqrt2\,n\sodd(n)\le36n^3<624n^3$, this majorant also bounds
the positive reference-moment tail on $\mathscr B$, without a window
condition. The bound for $\mathcal B-\mathcal B_{5,20}$ is uniform
over valid triples.
\end{proof}

The preceding estimates give the following finite covering criterion.
For a finite expression $q$ on a closed box $B$,
$\underline q(B)$ and $\overline q(B)$ bound $q$ below and above
on $B$, including rounding errors.

\begin{lemma}[Finite covering criterion]
\label{lem:audit-cover}
Consider the auxiliary functions and initial boxes specified in this
appendix. Suppose that the following conditions hold.
\begin{enumerate}
\item Each auxiliary window is a closed interval contained in
$[329/100,21]$. At its two endpoints $\sigma=\pm1$,
\[
 P_\sigma^\sharp(u)>0\quad(u\ge0),\qquad
 P_\sigma(u)+(1+u)^2<0\quad(u\ge60),
 \qquad |\gamma_\sigma|\le4,\quad|\eta_\sigma|\le1.
\]
\item For each auxiliary triple, a finite subdivision of
$[-1,1]\times[0,1/2]\times[0,37/4]$, with
$\tau=((1-\sigma)\tau_-+(1+\sigma)\tau_+)/2$ and $w=1-\delta^2+x$,
certifies $G_5>10^{-30}$ on every terminal box by interval
enclosure, Taylor's theorem, or monotonicity.
\item A finite subdivision of $\mathscr B$ has, on each terminal box
$B$, either
\begin{equation}\label{eq:audit-C-leaf}
 \underline{\,8a-\mathfrak m_{20}(\alpha,a)\,}(B)>10^{-24},
 \qquad
 \mathfrak m_{20}=24\sum_{n=1}^{20}\sodd(n)\sqrt2\,n\,
                          e^{\pi\alpha(a-\sqrt2\,n)},
\end{equation}
or, for an auxiliary triple whose window contains the whole
$\tau$-range of $B$,
\begin{equation}\label{eq:audit-E-leaf}
 \underline{\mathcal B_{5,20}}(B)>10^{-5}+10^{-24}.
\end{equation}
\item Each subdivision is obtained by splitting a closed rational
box into two closed rational boxes whose union is the parent box.
Every branch terminates at a box on which the indicated inequality
is certified, and the verification consumes the complete subdivision
record.
\end{enumerate}
Then Lemmas~\ref{lem:witnesses} and~\ref{lem:finalcertificate} hold,
with energy margin $10^{-5}$ in the latter.
\end{lemma}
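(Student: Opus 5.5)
The plan is to derive the two lemmas from conditions (1)--(4) by combining the affine structure in $\sigma$ with the truncation bounds already proved. No further analytic input is needed.

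\textbf{Lemma~\ref{lem:witnesses}.} Fix a triple. On its window, $\sigma$ is an affine bijection onto $[-1,1]$, so $P_\sigma$, $P_\sigma^\sharp$, $\gamma_\sigma$ and $\eta_\sigma$ are convex combinations of their values at $\sigma=\pm1$. Convex combinations preserve the following properties:
\begin{itemize}
\item pointwise positivity of $P^\sharp_\sigma(u)$ for each $u\ge0$;
\item the inequality $P_\sigma(u)+(1+u)^2<0$, since the quadratic term carries weights summing to one;
\item the absolute bounds $|\gamma_\sigma|\le4$ and $|\eta_\sigma|\le1$.
\end{itemize}
Condition~(1) therefore gives \eqref{eq:Qsign}--\eqref{eq:multipliers} throughout each window, together with the window containment in $[329/100,21]$.

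For the minorant \eqref{eq:potentialminorant}, multiply by $e^{\tau+\pi w}$ to reduce it to $G\ge0$ with $G$ as in \eqref{eq:Gexact}. Every admissible $(w,\delta)$ with $w\le10$ has $x=w-1+\delta^2\in[0,37/4]$, so it lies in the box \eqref{eq:potentialbox}. Condition~(2), applied through the subdivision covering in (4), gives $G_5>10^{-30}$ there. The tail estimate \eqref{eq:Gtail} then yields $G>0$ on this range. For $w\ge10$, the radial bound \eqref{eq:infiniteradial} together with \eqref{eq:Ptail} gives $G>0$ directly, exactly as in the proof already written.

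\textbf{Lemma~\ref{lem:finalcertificate}.} Take $(\alpha,a)\in\mathscr B$. By (4), the terminal boxes are closed and their union is $\mathscr B$, so $(\alpha,a)$ lies in some terminal box $B$. There are two cases.
\begin{itemize}
\item If \eqref{eq:audit-C-leaf} holds on $B$, then $8a>\mathfrak m_{20}+10^{-24}$. By Lemma~\ref{lem:scalartails}, the remainder satisfies $e^{\tau}M(t,\D)-\mathfrak m_{20}<10^{-24}$, and this gives \eqref{eq:alternativeC}.
\item If \eqref{eq:audit-E-leaf} holds, the chosen triple's window contains $\tau=\pi\alpha a$. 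Lemma~\ref{lem:scalartails} bounds $|\mathcal B-\mathcal B_{5,20}|<10^{-24}$, so $\mathcal B>10^{-5}$, which is \eqref{eq:alternativeB}.
\end{itemize}

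\textbf{Main obstacle.} The only delicate step is checking that the interval certificates really bound the continuous functions, that is, that rounding and truncation errors are accounted for. Truncation is handled by the explicit $10^{-30}$ and $10^{-24}$ margins. Rounding is handled by outward-rounded enclosures via \eqref{eq:meanvaluecheck} and \eqref{eq:Taylorcheck}, with derivatives that include the $\tau$-dependence of the coefficients through $d\sigma/d\tau$.
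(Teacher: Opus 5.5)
Your proposal is correct and follows essentially the same route as the paper. The paper likewise uses affine (convex) interpolation in $\sigma$ to transfer the endpoint signs and multiplier bounds, obtains the minorant from the box certificate together with \eqref{eq:Gtail} for $w\le10$ and \eqref{eq:infiniteradial} with \eqref{eq:Ptail} for $w\ge10$, and applies Lemma~\ref{lem:scalartails} to the two leaf inequalities on a covering of $\mathscr B$ by closed terminal boxes.
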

\begin{proof}
Affine interpolation preserves the signs and multiplier bounds.
By \eqref{eq:Gtail}, the terminal bound gives $G>0$ on
\eqref{eq:potentialbox}, hence for all admissible fibres with $w\le10$;
\eqref{eq:infiniteradial} and \eqref{eq:Ptail} treat $w\ge10$.

Lemma~\ref{lem:scalartails} bounds both scalar tails by $10^{-24}$,
so \eqref{eq:audit-C-leaf} gives \eqref{eq:alternativeC} and
\eqref{eq:audit-E-leaf} gives \eqref{eq:alternativeB}.
The closed children cover their parent interval; induction over the finite trees ensures that every point of the initial domains, including their boundaries, is covered.
\end{proof}

\begin{proof}[Proof of Lemma~\ref{lem:finalcertificate}]
In the rational subdivision of \eqref{eq:closedrectangle}, each terminal
rectangle uses either the covariance alternative or an auxiliary index.
In the first case, on $B$,
\[
 \overline{\mathfrak m_{20}}(B)+10^{-24}<\underline{8a}(B).
\]
Lemma~\ref{lem:scalartails} gives
$e^\tau M(t,\D)<8a$ throughout $B$, proving
\eqref{eq:alternativeC} there.

In the second case, $B$ has index $j$ with $\tau(B)\subset I_j$,
and enclosure or \eqref{eq:meanvaluecheck} gives
\begin{equation}\label{eq:finalpredicate}
 \mathcal B_{5,20}>10^{-5}+10^{-24}
\end{equation}
throughout that rectangle. The chain rule includes
$\partial_\alpha\tau=\pi a$ and $\partial_a\tau=\pi\alpha$, together
with the affine dependence of $P,\gamma,\eta$ on $\tau$.
Lemma~\ref{lem:scalartails} then gives
\eqref{eq:alternativeB}.

The supplied subdivision verifies the indicated inequality on
every terminal rectangle, together with the window inclusion
whenever an auxiliary triple is used. Lemma~\ref{lem:audit-cover}
therefore gives the alternative on all of $\mathscr B$, including
its boundary.
\end{proof}

\begin{proof}[Proof of Theorem~\ref{prop:auxiliary-estimates}]
Appendix~\ref{subsec:reference-checks} proves the reference and shell
estimates and Lemma~\ref{lem:cuspdata}; Lemmas~\ref{lem:witnesses}
and~\ref{lem:finalcertificate} follow from Appendix~\ref{subsec:potential-checks}.
\end{proof}

\section{Rational auxiliary functions}\label{app:coefficients}
This appendix specifies the rational auxiliary functions used in
Appendix~\ref{sec:verification}. We give the nodes, closed windows,
degrees, multipliers, and integer coefficient vectors, with one
denominator for each family. Entries are read from left to right
across successive lines; affine interpolation extends the endpoint
estimates to each interval.

\subsection{Polynomials for the short-vector bound}\label{subsec:coefficient-cusp}
At each of the twelve nodes below, define
\[
 P_\alpha(u)=10^{-16}\sum_{k=0}^{9}a_{\alpha,k}\ell_k^{(4)}(u),
 \qquad \ell_k^{(4)}(u)=\frac{L_k^1(u)}{k+1},
 \qquad \mathbf a_\alpha=(a_{\alpha,0},\ldots,a_{\alpha,9}).
\]
Interpolate linearly in $\alpha$ between nodes. Multiplying the $k$th
coefficient by $(-1)^k$ gives the Fourier partner; set $S_\alpha=1-P_\alpha$.

\begin{lemma}[Short-vector endpoint polynomials]
\label{lem:short-vector-endpoints}
Let $P_\alpha$ be the twelve degree-nine polynomials specified at the
nodes \eqref{eq:cusp-nodes} by the integer vectors below. Their Laguerre coefficients belong to $10^{-16}\mathbb Z$.
For consecutive nodes $a<b$ and $\alpha\in\{a,b\}$,
\[
 \begin{aligned}
 S_\alpha(u)&>0,\qquad P_\alpha^\sharp(u)>0 &&(u\ge0),\\
 S_\alpha(u)-2S_\alpha'(u)&>0 &&(0\le u\le33b/5).
 \end{aligned}
\]
\end{lemma}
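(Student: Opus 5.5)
The statement is a finite assertion about twelve explicit rational polynomials of degree nine, so the plan is exact polynomial sign verification. Everything reduces to the sign of a univariate polynomial with rational coefficients on a half-line or a bounded interval. For each node $\alpha$ in \eqref{eq:cusp-nodes}, convert $P_\alpha=10^{-16}\sum_k a_{\alpha,k}\ell_k^{(4)}$ to the monomial basis using the explicit expansion of $\ell_k^{(4)}$. Its coefficients $(-1)^j\binom{k+1}{k-j}/(j!(k+1))$ are rational, so $S_\alpha=1-P_\alpha$, $P_\alpha^\sharp$ (obtained by flipping the signs of odd-index coefficients) and $S_\alpha-2S_\alpha'$ all become exact polynomials in $\mathbb Q[u]$ of degree at most nine. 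Clearing denominators gives integer polynomials.

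Each positivity claim on $[0,\infty)$ is checked in three steps.
\begin{enumerate}
\item Verify that the value at $u=0$ is positive.
\item Verify that the leading coefficient is positive. For $S_\alpha$ this means the degree-nine coefficient of $P_\alpha$ is negative.
\item Show there is no root in $(0,\infty)$, by counting sign changes of the Sturm sequence at $0$ and at $+\infty$; the latter is read off from leading coefficients.
\end{enumerate}
A double root is ruled out either because the Sturm sequence terminates in a nonzero constant (squarefree case) or by dividing out the gcd first and checking that the gcd has no positive root. For the bounded claim $S_\alpha-2S_\alpha'>0$ on $[0,33b/5]$, the endpoint $33b/5$ is rational. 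I check the sign at $u=0$ and at $u=33b/5$, and count Sturm sign changes between these two points to exclude interior roots. The claim must hold at both nodes $a$ and $b$ of each cell, with $b$ the right node, so the left-node polynomial of cell $[a,b]$ is checked on the longer interval $[0,33b/5]$. Since $33/5>21\pi/10$, the interval $[0,(21\pi/10)\alpha]$ needed in Lemma~\ref{lem:cuspdata} is contained in $[0,33b/5]$ for all $\alpha\le b$, which is why $33/5$ was chosen. The arithmetic is exact in $\mathbb Z[u]$, with no rounding, so strictness is certified unconditionally.

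The only conceptual input is confirming that the Fourier partner is correctly defined in this basis. By \eqref{eq:Lag4}, $\ell_k^{(4)}$ has Fourier eigenvalue $(-1)^k$, so $P^\sharp$ is obtained from the same integer vector with alternating signs. That is already stated in the paper, and nothing else about $P^\sharp$ is needed.

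The main obstacle is numerical rather than conceptual: the polynomials are near-extremal, so positive minima of $P^\sharp_\alpha$ and of $S_\alpha$ may be tiny. Sturm-sequence coefficients for degree nine with $10^{-16}$-scale integers grow large, though this is still well within exact arithmetic. If a near-double root makes the count delicate, I would fall back on isolating intervals from Descartes' rule with rational bisection near the suspected minimum, bounding the polynomial there by an exact rational evaluation and derivative bound via \eqref{eq:meanvaluecheck}. I expect the verification of $P_\alpha^\sharp>0$ near its minimum, for the nodes close to $\alpha=1$, to be where care is needed.
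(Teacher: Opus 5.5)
Your proposal matches the paper's proof: exact rational Sturm sequences rule out roots of $S_\alpha$ and $P_\alpha^\sharp$ on $[0,\infty)$ and of $S_\alpha-2S_\alpha'$ on $[0,33b/5]$, positive values are checked at the endpoints, and each shared node is checked on the larger adjacent interval, exactly as you describe. Your separate leading-coefficient and multiplicity checks are redundant, since positivity at $0$ together with a zero Sturm root count already gives the conclusion, but they are harmless.
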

\begin{proof}
Exact rational Sturm sequences give no roots of $S_\alpha$ or
$P_\alpha^\sharp$ on $[0,\infty)$; both are positive at zero.
Likewise, $S_\alpha-2S_\alpha'$ has no roots on $[0,33b/5]$ and
positive endpoint values, using the larger adjacent interval at shared nodes.
\end{proof}

\begin{samepage}
\footnotesize
\setlength{\abovedisplayskip}{5pt}
\setlength{\belowdisplayskip}{5pt}
\noindent $\alpha=1$.
\[
\begin{aligned}
\mathbf a_{1}={}\bigl( & 4\,586\,315\,448\,202\,439,\; -1\,575\,758\,594\,260\,412,\; 53\,663\,846\,908\,617,\; 219\,065\,131\,076\,022,\\
& -21\,544\,209\,473\,589,\; -38\,092\,436\,935\,344,\; 6\,091\,221\,656\,916,\; 11\,121\,805\,878\,183,\\
& -2\,657\,442\,059\,217,\; 177\,239\,424\,585\bigr)
\end{aligned}
\]
\end{samepage}
\normalsize
\par\penalty0\smallskip

\begin{samepage}
\footnotesize
\setlength{\abovedisplayskip}{5pt}
\setlength{\belowdisplayskip}{5pt}
\noindent $\alpha=\frac{17}{16}$.
\[
\begin{aligned}
\mathbf a_{\frac{17}{16}}={}\bigl( & 3\,888\,262\,756\,862\,798,\; -1\,520\,158\,466\,539\,241,\; 139\,542\,442\,260\,210,\; 199\,039\,692\,190\,185,\\
& -56\,619\,662\,427\,504,\; -28\,276\,899\,637\,239,\; 15\,399\,087\,507\,243,\; 6\,822\,342\,167\,553,\\
& -6\,104\,441\,184\,831,\; 1\,287\,586\,769\,446\bigr)
\end{aligned}
\]
\end{samepage}
\normalsize
\par\penalty0\smallskip

\begin{samepage}
\footnotesize
\setlength{\abovedisplayskip}{5pt}
\setlength{\belowdisplayskip}{5pt}
\noindent $\alpha=\frac{9}{8}$.
\[
\begin{aligned}
\mathbf a_{\frac{9}{8}}={}\bigl( & 3\,279\,211\,963\,693\,232,\; -1\,428\,335\,508\,453\,371,\; 208\,201\,285\,759\,203,\; 167\,478\,243\,420\,708,\\
& -77\,929\,147\,715\,859,\; -14\,495\,679\,797\,049,\; 19\,399\,977\,548\,226,\; 1\,562\,218\,452\,531,\\
& -6\,395\,452\,950\,744,\; 2\,301\,888\,009\,178\bigr)
\end{aligned}
\]
\end{samepage}
\normalsize
\par\penalty0\smallskip

\begin{samepage}
\footnotesize
\setlength{\abovedisplayskip}{5pt}
\setlength{\belowdisplayskip}{5pt}
\noindent $\alpha=\frac{5}{4}$.
\[
\begin{aligned}
\mathbf a_{\frac{5}{4}}={}\bigl( & 2\,271\,448\,239\,644\,282,\; -1\,180\,918\,043\,924\,246,\; 290\,795\,525\,739\,180,\; 91\,296\,705\,276\,414,\\
& -88\,888\,458\,855\,141,\; 11\,845\,698\,901\,641,\; 16\,325\,489\,947\,734,\; -5\,837\,553\,576\,261,\\
& -3\,167\,567\,261\,595,\; 2\,843\,839\,547\,604\bigr)
\end{aligned}
\]
\end{samepage}
\normalsize
\par\penalty0\smallskip

\begin{samepage}
\footnotesize
\setlength{\abovedisplayskip}{5pt}
\setlength{\belowdisplayskip}{5pt}
\noindent $\alpha=\frac{11}{8}$.
\[
\begin{aligned}
\mathbf a_{\frac{11}{8}}={}\bigl( & 1\,524\,616\,508\,136\,497,\; -910\,452\,688\,024\,760,\; 305\,137\,293\,199\,002,\; 23\,000\,755\,954\,374,\\
& -73\,358\,028\,043\,542,\; 27\,924\,372\,033\,525,\; 6\,310\,520\,424\,846,\; -7\,374\,353\,230\,926,\\
& 309\,019\,264\,983,\; 2\,084\,079\,421\,215\bigr)
\end{aligned}
\]
\end{samepage}
\normalsize
\par\penalty0\smallskip

\begin{samepage}
\footnotesize
\setlength{\abovedisplayskip}{5pt}
\setlength{\belowdisplayskip}{5pt}
\noindent $\alpha=\frac{3}{2}$.
\[
\begin{aligned}
\mathbf a_{\frac{3}{2}}={}\bigl( & 990\,185\,540\,718\,083,\; -662\,856\,589\,180\,853,\; 274\,592\,288\,924\,910,\; -22\,845\,766\,764\,843,\\
& -48\,514\,218\,242\,985,\; 32\,690\,127\,030\,363,\; -4\,176\,832\,065\,021,\; -4\,411\,876\,558\,221,\\
& 1\,938\,905\,200\,089,\; 964\,094\,691\,889\bigr)
\end{aligned}
\]
\end{samepage}
\normalsize
\par\penalty0\smallskip

\begin{samepage}
\footnotesize
\setlength{\abovedisplayskip}{5pt}
\setlength{\belowdisplayskip}{5pt}
\noindent $\alpha=\frac{13}{8}$.
\[
\begin{aligned}
\mathbf a_{\frac{13}{8}}={}\bigl( & 623\,855\,786\,787\,140,\; -460\,112\,933\,954\,015,\; 221\,862\,471\,254\,028,\; -45\,401\,310\,564\,885,\\
& -24\,932\,129\,857\,668,\; 29\,386\,755\,390\,564,\; -12\,050\,911\,558\,323,\; 1\,095\,283\,460\,700,\\
& 1\,154\,550\,313\,422,\; 120\,300\,699\,727\bigr)
\end{aligned}
\]
\end{samepage}
\normalsize
\par\penalty0\smallskip

\begin{samepage}
\footnotesize
\setlength{\abovedisplayskip}{5pt}
\setlength{\belowdisplayskip}{5pt}
\noindent $\alpha=\frac{7}{4}$.
\[
\begin{aligned}
\mathbf a_{\frac{7}{4}}={}\bigl( & 386\,052\,161\,477\,945,\; -309\,526\,446\,038\,822,\; 167\,110\,396\,679\,061,\; -50\,542\,782\,756\,246,\\
& -7\,872\,032\,368\,035,\; 21\,905\,242\,386\,687,\; -15\,061\,005\,508\,824,\; 5\,976\,347\,895\,441,\\
& -1\,183\,425\,625\,602,\; 94\,308\,578\,499\bigr)
\end{aligned}
\]
\end{samepage}
\normalsize
\par\penalty0\smallskip

\begin{samepage}
\footnotesize
\setlength{\abovedisplayskip}{5pt}
\setlength{\belowdisplayskip}{5pt}
\noindent $\alpha=\frac{15}{8}$.
\[
\begin{aligned}
\mathbf a_{\frac{15}{8}}={}\bigl( & 240\,579\,582\,500\,714,\; -205\,954\,696\,249\,622,\; 121\,904\,943\,756\,453,\; -46\,296\,083\,088\,684,\\
& 2\,248\,972\,170\,291,\; 13\,379\,966\,269\,560,\; -12\,528\,618\,582\,843,\; 6\,810\,095\,792\,313,\\
& -2\,328\,980\,108\,697,\; 413\,592\,746\,619\bigr)
\end{aligned}
\]
\end{samepage}
\normalsize
\par\penalty0\smallskip

\begin{samepage}
\footnotesize
\setlength{\abovedisplayskip}{5pt}
\setlength{\belowdisplayskip}{5pt}
\noindent $\alpha=2$.
\[
\begin{aligned}
\mathbf a_{2}={}\bigl( & 149\,977\,889\,156\,156,\; -135\,130\,411\,136\,447,\; 86\,385\,437\,172\,342,\; -38\,289\,376\,499\,418,\\
& 7\,026\,236\,306\,100,\; 6\,731\,687\,653\,911,\; -8\,605\,760\,197\,914,\; 5\,620\,891\,854\,600,\\
& -2\,337\,628\,263\,795,\; 551\,096\,004\,888\bigr)
\end{aligned}
\]
\end{samepage}
\normalsize
\par\penalty0\smallskip

\begin{samepage}
\footnotesize
\setlength{\abovedisplayskip}{5pt}
\setlength{\belowdisplayskip}{5pt}
\noindent $\alpha=\frac{17}{8}$.
\[
\begin{aligned}
\mathbf a_{\frac{17}{8}}={}\bigl( & 92\,539\,332\,336\,281,\; -86\,900\,722\,827\,623,\; 59\,374\,850\,471\,145,\; -29\,560\,004\,073\,963,\\
& 8\,383\,324\,213\,737,\; 2\,463\,760\,029\,357,\; -5\,252\,197\,617\,711,\; 4\,040\,401\,089\,483,\\
& -1\,907\,480\,962\,827,\; 526\,319\,982\,738\bigr)
\end{aligned}
\]
\end{samepage}
\normalsize
\par\penalty0\smallskip

\begin{samepage}
\footnotesize
\setlength{\abovedisplayskip}{5pt}
\setlength{\belowdisplayskip}{5pt}
\noindent $\alpha=\frac{9}{4}$.
\[
\begin{aligned}
\mathbf a_{\frac{9}{4}}={}\bigl( & 55\,740\,238\,703\,672,\; -54\,147\,946\,412\,825,\; 39\,264\,121\,235\,196,\; -21\,472\,114\,893\,768,\\
& 7\,826\,805\,491\,175,\; 61\,848\,064\,575,\; -2\,813\,494\,442\,418,\; 2\,618\,147\,929\,023,\\
& -1\,380\,000\,266\,172,\; 426\,337\,182\,018\bigr)
\end{aligned}
\]
\end{samepage}
\normalsize
\par\penalty0\smallskip

\subsection{Polynomials and multipliers for the projected bound}\label{subsec:coefficient-projected}
For each index $j$, let $I_j=[\tau_{j,-},\tau_{j,+}]$ be the closed
window specified below, and set
\[
 \sigma=\frac{2\tau-\tau_{j,-}-\tau_{j,+}}{\tau_{j,+}-\tau_{j,-}},
 \qquad \tau\in I_j.
\]
The integer vectors $\mathbf B_{j,0},\mathbf B_{j,1}$ and pairs
$\mathbf G_j=(G_{j,0},G_{j,1})$, $\mathbf E_j=(E_{j,0},E_{j,1})$
define the affine triple by
\[
 \begin{aligned}
 P_\sigma(u)&=10^{-15}\sum_{k=0}^{N_j}
          (B_{j,0,k}+\sigma B_{j,1,k})\ell_k(u),
 &\ell_k(u)&=\frac{L_k^{1/2}(u)}{L_k^{1/2}(0)},\\
 \gamma_\sigma&=10^{-15}(G_{j,0}+\sigma G_{j,1}),
 &\eta_\sigma&=10^{-15}(E_{j,0}+\sigma E_{j,1}).
 \end{aligned}
\]
Within each vector, the index $k$ runs from $0$ to $N_j$.
The Fourier partner $P_\sigma^\sharp$ has Laguerre coefficient
$(-1)^k$ times that of $P_\sigma$.

\begin{lemma}[Projected endpoint polynomials]
\label{lem:projected-endpoints}
Let $\mathscr W$ consist of the thirty affine triples specified below. Their closed rational
windows lie in $[329/100,21]$, their degrees belong to
$\{9,11,15\}$, and the constant and linear coefficients in $\sigma$
in \eqref{eq:affine} belong to $10^{-15}\mathbb Z$. At either endpoint $\sigma=\pm1$
of every window,
\[
 \begin{aligned}
 P_\sigma^\sharp(u)&>0 &&(u\ge0),\\
 P_\sigma(u)+(1+u)^2&<0 &&(u\ge60),\\
 |\gamma_\sigma|&\le4,\qquad |\eta_\sigma|\le1.
 \end{aligned}
\]
\end{lemma}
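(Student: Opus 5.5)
The plan is to reduce every assertion of Lemma~\ref{lem:projected-endpoints} to exact rational computations at the sixty endpoint data (thirty windows, two values $\sigma=\pm1$ each). The structural claims come first and are immediate from the printed data: windows lie in $[329/100,21]$, degrees lie in $\{9,11,15\}$, and coefficients lie in $10^{-15}\Z$. For these we only need to read off the rational endpoints $\tau_{j,\pm}$, the lengths $N_j+1$ of the vectors $\mathbf B_{j,0},\mathbf B_{j,1}$, and their integrality. The multiplier bounds are also trivial. At $\sigma=\pm1$ we have $\gamma_\sigma=10^{-15}(G_{j,0}\pm G_{j,1})$ and $\eta_\sigma=10^{-15}(E_{j,0}\pm E_{j,1})$, so $|\gamma_\sigma|\le4$ and $|\eta_\sigma|\le1$ are integer comparisons $|G_{j,0}\pm G_{j,1}|\le4\cdot10^{15}$ and $|E_{j,0}\pm E_{j,1}|\le10^{15}$.

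For the two polynomial sign conditions, I will first convert each endpoint polynomial from the basis \eqref{eq:Lag3} to the monomial basis with exact rational arithmetic. This uses $\ell_k(u)=\sum_j(-1)^j\binom kj u^j/(3/2)_j$; since $(3/2)_j$ is rational, the resulting $P_{\pm1}$ and $P^\sharp_{\pm1}$ lie in $\mathbb Q[u]$ after clearing denominators.

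To prove positivity of $P^\sharp_\sigma$ on $[0,\infty)$, I will check $P^\sharp_\sigma(0)=10^{-15}\sum_k(-1)^k(B_{j,0,k}\pm B_{j,1,k})>0$, which holds because $\ell_k(0)=1$. I will also check that the leading coefficient is positive. A Sturm sequence then shows that $P^\sharp_\sigma$ has no real root in $[0,\infty)$, by comparing sign variations at $0$ and at $+\infty$.

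For the tail inequality, set $T(u)=-P_\sigma(u)-(1+u)^2\in\mathbb Q[u]$. I will verify $T(60)>0$, that the leading coefficient of $T$ is positive (equivalently, that the top Laguerre coefficient of $P_\sigma$ has sign $(-1)^{N_j+1}$), and that a Sturm count gives no roots on $[60,\infty)$. This is the same procedure used for Lemma~\ref{lem:short-vector-endpoints}.

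No earlier analytic result is needed; the lemma is a finite certificate. The main obstacle is purely computational: degree-$15$ Sturm sequences with coefficients of size $10^{15}$ and denominators from $(3/2)_j$ produce very large rationals. I would use subresultant or primitive pseudo-remainder sequences to control coefficient growth. If a Sturm count is inconclusive because of near-double roots close to $0$ for $P^\sharp$, the first fallback is to isolate roots with Descartes' rule after the substitution $u\mapsto 1/(1+u)$ on $[0,\infty)$, which certifies the absence of positive roots equally rigorously. All of this is executed in exact integer arithmetic, so no rounding analysis is required.
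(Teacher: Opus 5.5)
Your proposal is correct and is essentially the paper's own proof. The paper likewise uses exact rational Sturm sequences to show that $P_\sigma^\sharp$ has no roots on $[0,\infty)$ and that $P_\sigma+(1+u)^2$ has none on $[60,\infty)$, fixes the signs at $u=0$ and $u=60$, and settles the windows, degrees, coefficient lattice and multiplier bounds at $\sigma=\pm1$ by direct rational comparison.

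Your extra leading-coefficient checks already follow from the root-free Sturm count plus the endpoint sign. The Descartes fallback is unnecessary, because an exact Sturm count is never inconclusive. If you do use pseudo-remainders or primitive parts, track the signs of the leading-coefficient powers and contents so the sequence stays a genuine Sturm sequence.
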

\begin{proof}
Exact rational Sturm sequences give no roots of $P_\sigma^\sharp$
on $[0,\infty)$ or $P_\sigma+(1+u)^2$ on $[60,\infty)$, with
the asserted signs at $0$ and $60$. Rational comparison verifies
the window inclusions and multiplier bounds using both affine pairs.
\end{proof}

\begin{samepage}
\footnotesize
\setlength{\abovedisplayskip}{5pt}
\setlength{\belowdisplayskip}{5pt}
\noindent $j=0$, $I_j=[\frac{329}{100},\frac{41}{10}]$, $N_j=9$.
\[
\begin{aligned}
\mathbf B_{0,0}={}\bigl( & 17\,569\,160\,072\,547\,198,\; -847\,186\,045\,105\,275,\; -1\,030\,229\,038\,721\,334,\; 241\,223\,247\,774\,188,\\
& 281\,154\,683\,314\,916,\; 41\,220\,793\,898\,547,\; -43\,666\,541\,307\,629,\; -16\,373\,536\,471\,560,\\
& 7\,015\,350\,691\,643,\; 6\,834\,061\,440\,690\bigr)
\end{aligned}
\]
\[
\begin{aligned}
\mathbf B_{0,1}={}\bigl( & 1\,474\,369\,816\,620\,854,\; 660\,458\,930\,735\,836,\; -356\,536\,290\,981\,739,\; 21\,546\,227\,126\,514,\\
& 95\,992\,898\,932\,158,\; 12\,421\,385\,756\,919,\; -14\,639\,689\,931\,025,\; -4\,219\,005\,059\,133,\\
& 2\,327\,630\,563\,695,\; 1\,702\,602\,397\,049\bigr)
\end{aligned}
\]
\[
\mathbf G_{0}=(-356\,592\,375\,773\,652,\quad -62\,929\,038\,482\,252),\qquad
\mathbf E_{0}=(0,\quad 0).
\]
\end{samepage}
\normalsize
\par\penalty0\smallskip

\begin{samepage}
\footnotesize
\setlength{\abovedisplayskip}{5pt}
\setlength{\belowdisplayskip}{5pt}
\noindent $j=1$, $I_j=[4,5]$, $N_j=9$.
\[
\begin{aligned}
\mathbf B_{1,0}={}\bigl( & 19\,316\,776\,696\,995\,564,\; 242\,523\,784\,457\,620,\; -1\,653\,047\,234\,696\,646,\; 305\,718\,572\,157\,493,\\
& 348\,109\,144\,647\,535,\; 7\,861\,989\,254\,154,\; -57\,858\,206\,188\,501,\; -7\,381\,879\,167\,890,\\
& 14\,328\,060\,410\,484,\; 8\,436\,758\,143\,502\bigr)
\end{aligned}
\]
\[
\begin{aligned}
\mathbf B_{1,1}={}\bigl( & 807\,200\,581\,436\,003,\; 608\,841\,649\,515\,109,\; -407\,089\,280\,723\,188,\; 70\,966\,283\,311\,898,\\
& 5\,965\,704\,940\,330,\; -49\,285\,831\,258\,677,\; -6\,230\,162\,130\,759,\; 13\,335\,377\,168\,505,\\
& 6\,482\,749\,856\,404,\; 266\,156\,547\,251\bigr)
\end{aligned}
\]
\[
\mathbf G_{1}=(-471\,990\,348\,950\,231,\quad -70\,494\,054\,429\,959),\qquad
\mathbf E_{1}=(0,\quad 0).
\]
\end{samepage}
\normalsize
\par\penalty0\smallskip

\begin{samepage}
\footnotesize
\setlength{\abovedisplayskip}{5pt}
\setlength{\belowdisplayskip}{5pt}
\noindent $j=2$, $I_j=[\frac{49}{10},\frac{11}{2}]$, $N_j=9$.
\[
\begin{aligned}
\mathbf B_{2,0}={}\bigl( & 21\,507\,715\,265\,165\,352,\; 1\,283\,114\,462\,926\,282,\; -1\,925\,069\,513\,653\,568,\; 187\,951\,008\,959\,265,\\
& 549\,995\,073\,115\,716,\; 115\,935\,380\,898\,583,\; -93\,343\,914\,384\,373,\; -36\,262\,763\,435\,869,\\
& 17\,294\,149\,076\,150,\; 14\,425\,608\,345\,904\bigr)
\end{aligned}
\]
\[
\begin{aligned}
\mathbf B_{2,1}={}\bigl( & 2\,314\,962\,850\,359\,984,\; 737\,603\,274\,488\,664,\; -379\,401\,785\,140\,896,\; -13\,785\,750\,521\,230,\\
& 100\,875\,502\,097\,992,\; 20\,184\,631\,098\,661,\; -27\,483\,250\,093\,765,\; -5\,409\,899\,257\,746,\\
& 7\,949\,304\,435\,895,\; 3\,048\,427\,152\,146\bigr)
\end{aligned}
\]
\[
\mathbf G_{2}=(-575\,006\,717\,891\,784,\quad -58\,150\,015\,978\,538),\qquad
\mathbf E_{2}=(0,\quad 0).
\]
\end{samepage}
\normalsize
\par\penalty0\smallskip

\begin{samepage}
\footnotesize
\setlength{\abovedisplayskip}{5pt}
\setlength{\belowdisplayskip}{5pt}
\noindent $j=3$, $I_j=[\frac{27}{5},\frac{61}{10}]$, $N_j=9$.
\[
\begin{aligned}
\mathbf B_{3,0}={}\bigl( & 19\,285\,552\,098\,818\,868,\; 1\,094\,234\,783\,762\,215,\; -1\,688\,139\,636\,355\,250,\; 193\,603\,249\,695\,094,\\
& 609\,591\,975\,092\,513,\; 198\,899\,567\,899\,884,\; -85\,847\,444\,193\,574,\; -39\,209\,550\,022\,969,\\
& -24\,940\,059\,437\,963,\; 60\,809\,360\,013\,190\bigr)
\end{aligned}
\]
\[
\begin{aligned}
\mathbf B_{3,1}={}\bigl( & 325\,295\,964\,960\,971,\; 249\,205\,790\,679\,789,\; -57\,224\,115\,862\,622,\; 5\,894\,514\,248\,189,\\
& 128\,202\,095\,427\,927,\; 91\,490\,510\,319\,199,\; -15\,129\,411\,421\,212,\; -5\,218\,369\,889\,627,\\
& -34\,973\,697\,377\,690,\; 48\,336\,517\,773\,658\bigr)
\end{aligned}
\]
\[
\mathbf G_{3}=(-616\,213\,073\,263\,267,\quad -43\,184\,204\,551\,397),\qquad
\mathbf E_{3}=(0,\quad 0).
\]
\end{samepage}
\normalsize
\par\penalty0\smallskip

\begin{samepage}
\footnotesize
\setlength{\abovedisplayskip}{5pt}
\setlength{\belowdisplayskip}{5pt}
\noindent $j=4$, $I_j=[\frac{59}{10},7]$, $N_j=9$.
\[
\begin{aligned}
\mathbf B_{4,0}={}\bigl( & 15\,722\,571\,687\,964\,236,\; 1\,039\,797\,073\,593\,928,\; -1\,344\,356\,031\,009\,688,\; 96\,452\,138\,362\,805,\\
& 338\,394\,306\,155\,240,\; 76\,631\,884\,197\,823,\; -42\,257\,337\,950\,306,\; -20\,287\,777\,948\,815,\\
& 5\,447\,034\,721\,715,\; 6\,750\,923\,584\,069\bigr)
\end{aligned}
\]
\[
\begin{aligned}
\mathbf B_{4,1}={}\bigl( & -2\,863\,129\,464\,196\,410,\; -62\,630\,961\,569\,201,\; 327\,795\,394\,716\,658,\; -78\,835\,717\,152\,385,\\
& -161\,615\,007\,550\,752,\; -38\,391\,247\,191\,358,\; 31\,317\,044\,059\,642,\; 15\,686\,085\,184\,577,\\
& -5\,002\,095\,260\,011,\; -6\,402\,844\,773\,094\bigr)
\end{aligned}
\]
\[
\mathbf G_{4}=(-644\,058\,934\,273\,268,\quad -20\,407\,970\,301\,288),\qquad
\mathbf E_{4}=(0,\quad 0).
\]
\end{samepage}
\normalsize
\par\penalty0\smallskip

\begin{samepage}
\footnotesize
\setlength{\abovedisplayskip}{5pt}
\setlength{\belowdisplayskip}{5pt}
\noindent $j=5$, $I_j=[\frac{69}{10},\frac{17}{2}]$, $N_j=9$.
\[
\begin{aligned}
\mathbf B_{5,0}={}\bigl( & 13\,541\,837\,527\,757\,038,\; 1\,015\,487\,174\,994\,281,\; -1\,290\,765\,737\,796\,498,\; 130\,113\,806\,791\,715,\\
& 398\,699\,400\,050\,781,\; 87\,211\,982\,185\,334,\; -69\,022\,215\,723\,988,\; -21\,759\,591\,815\,508,\\
& 18\,068\,197\,302\,621,\; 31\,298\,501\,639\,941\bigr)
\end{aligned}
\]
\[
\begin{aligned}
\mathbf B_{5,1}={}\bigl( & -3\,807\,880\,988\,574\,810,\; -367\,553\,157\,619\,942,\; 375\,759\,596\,611\,259,\; -22\,699\,433\,641\,844,\\
& -108\,848\,625\,405\,022,\; -34\,131\,419\,166\,791,\; 5\,782\,388\,501\,435,\; 15\,651\,200\,344\,973,\\
& 7\,443\,802\,576\,253,\; 17\,673\,413\,907\,249\bigr)
\end{aligned}
\]
\[
\mathbf G_{5}=(-681\,226\,328\,212\,904,\quad 27\,485\,075\,077\,900),\qquad
\mathbf E_{5}=(0,\quad 0).
\]
\end{samepage}
\normalsize
\par\penalty0\smallskip

\begin{samepage}
\footnotesize
\setlength{\abovedisplayskip}{5pt}
\setlength{\belowdisplayskip}{5pt}
\noindent $j=6$, $I_j=[\frac{52}{5},12]$, $N_j=9$.
\[
\begin{aligned}
\mathbf B_{6,0}={}\bigl( & 19\,962\,117\,251\,205\,448,\; 3\,545\,436\,732\,682\,850,\; -1\,969\,448\,228\,725\,096,\; -163\,885\,113\,838\,356,\\
& 600\,813\,371\,833\,352,\; 277\,244\,219\,080\,790,\; -35\,415\,949\,816\,078,\; -45\,869\,970\,661\,484,\\
& -1\,887\,017\,046\,914,\; 11\,000\,818\,886\,699\bigr)
\end{aligned}
\]
\[
\begin{aligned}
\mathbf B_{6,1}={}\bigl( & -6\,939\,123\,160\,616\,658,\; -1\,584\,377\,757\,283\,264,\; 622\,472\,539\,673\,208,\; 151\,265\,387\,512\,468,\\
& -183\,295\,337\,832\,780,\; -99\,003\,197\,881\,982,\; 23\,470\,167\,415\,634,\; 38\,486\,588\,977\,045,\\
& 2\,069\,745\,185\,361,\; -10\,843\,932\,908\,385\bigr)
\end{aligned}
\]
\[
\mathbf G_{6}=(-972\,692\,452\,240\,126,\quad 35\,138\,309\,985\,537),\qquad
\mathbf E_{6}=(0,\quad 0).
\]
\end{samepage}
\normalsize
\par\penalty0\smallskip

\begin{samepage}
\footnotesize
\setlength{\abovedisplayskip}{5pt}
\setlength{\belowdisplayskip}{5pt}
\noindent $j=7$, $I_j=[\frac{119}{10},\frac{68}{5}]$, $N_j=9$.
\[
\begin{aligned}
\mathbf B_{7,0}={}\bigl( & 26\,424\,248\,927\,604\,356,\; 5\,652\,590\,986\,039\,579,\; -2\,659\,457\,449\,082\,472,\; -332\,014\,324\,277\,028,\\
& 864\,694\,277\,110\,567,\; 410\,962\,111\,936\,891,\; -101\,817\,259\,172\,916,\; -91\,533\,856\,795\,605,\\
& -17\,276\,928\,030\,913,\; 90\,604\,471\,780\,598\bigr)
\end{aligned}
\]
\[
\begin{aligned}
\mathbf B_{7,1}={}\bigl( & 1\,118\,788\,943\,300\,847,\; 524\,408\,394\,408\,487,\; -189\,785\,302\,729\,921,\; -21\,073\,243\,643\,816,\\
& 105\,984\,211\,906\,609,\; 44\,363\,711\,829\,112,\; -44\,772\,721\,825\,239,\; -9\,333\,523\,887\,696,\\
& -13\,426\,561\,900\,353,\; 69\,286\,556\,148\,052\bigr)
\end{aligned}
\]
\[
\mathbf G_{7}=(-1\,116\,782\,167\,221\,306,\quad -61\,667\,307\,131\,519),\qquad
\mathbf E_{7}=(0,\quad 0).
\]
\end{samepage}
\normalsize
\par\penalty0\smallskip

\begin{samepage}
\footnotesize
\setlength{\abovedisplayskip}{5pt}
\setlength{\belowdisplayskip}{5pt}
\noindent $j=8$, $I_j=[\frac{94}{25},\frac{21}{5}]$, $N_j=11$.
\[
\begin{aligned}
\mathbf B_{8,0}={}\bigl( & 22\,508\,290\,242\,120\,828,\; 646\,988\,994\,763\,625,\; -1\,371\,685\,408\,032\,282,\; 5\,714\,793\,022\,501,\\
& 373\,863\,294\,392\,758,\; 168\,840\,203\,337\,073,\; -4\,792\,399\,292\,133,\; -34\,824\,763\,682\,570,\\
& -9\,853\,332\,769\,729,\; 7\,427\,711\,050\,701,\; 7\,256\,199\,808\,504,\; 2\,340\,499\,109\,608\bigr)
\end{aligned}
\]
\[
\begin{aligned}
\mathbf B_{8,1}={}\bigl( & 1\,022\,834\,684\,652\,179,\; 452\,895\,494\,851\,332,\; -132\,068\,480\,055\,398,\; -35\,768\,421\,403\,981,\\
& 60\,576\,694\,785\,904,\; 44\,523\,622\,144\,433,\; 4\,946\,265\,955\,872,\; -8\,327\,937\,776\,012,\\
& -3\,761\,693\,877\,268,\; 1\,735\,880\,366\,610,\; 2\,252\,328\,157\,903,\; 798\,901\,263\,388\bigr)
\end{aligned}
\]
\[
\mathbf G_{8}=(-416\,340\,904\,217\,397,\quad -35\,071\,862\,128\,564),\qquad
\mathbf E_{8}=(0,\quad 0).
\]
\end{samepage}
\normalsize
\par\penalty0\smallskip

\begin{samepage}
\footnotesize
\setlength{\abovedisplayskip}{5pt}
\setlength{\belowdisplayskip}{5pt}
\noindent $j=9$, $I_j=[\frac{41}{10},\frac{47}{10}]$, $N_j=11$.
\[
\begin{aligned}
\mathbf B_{9,0}={}\bigl( & 23\,990\,410\,355\,527\,180,\; 1\,389\,218\,208\,732\,794,\; -1\,580\,672\,908\,969\,926,\; -51\,618\,168\,218\,653,\\
& 468\,668\,897\,863\,708,\; 237\,138\,649\,332\,626,\; 2\,217\,788\,878\,505,\; -47\,632\,303\,362\,365,\\
& -15\,462\,219\,473\,605,\; 10\,091\,228\,140\,648,\; 10\,643\,410\,215\,524,\; 3\,537\,193\,352\,231\bigr)
\end{aligned}
\]
\[
\begin{aligned}
\mathbf B_{9,1}={}\bigl( & 937\,468\,468\,400\,903,\; 496\,750\,702\,632\,597,\; -138\,369\,598\,311\,307,\; -38\,688\,488\,611\,182,\\
& 62\,806\,221\,922\,965,\; 45\,295\,655\,371\,739,\; 4\,617\,364\,673\,034,\; -8\,494\,220\,513\,653,\\
& -3\,711\,207\,819\,962,\; 1\,768\,383\,835\,771,\; 2\,238\,664\,095\,470,\; 785\,572\,773\,613\bigr)
\end{aligned}
\]
\[
\mathbf G_{9}=(-478\,496\,716\,355\,679,\quad -43\,179\,914\,146\,186),\qquad
\mathbf E_{9}=(0,\quad 0).
\]
\end{samepage}
\normalsize
\par\penalty0\smallskip

\begin{samepage}
\footnotesize
\setlength{\abovedisplayskip}{5pt}
\setlength{\belowdisplayskip}{5pt}
\noindent $j=10$, $I_j=[\frac{23}{5},\frac{101}{20}]$, $N_j=11$.
\[
\begin{aligned}
\mathbf B_{10,0}={}\bigl( & 25\,391\,315\,756\,982\,704,\; 2\,091\,846\,874\,613\,638,\; -1\,841\,898\,047\,383\,381,\; -92\,205\,019\,528\,323,\\
& 519\,386\,034\,503\,886,\; 244\,430\,196\,705\,725,\; -15\,422\,536\,396\,783,\; -46\,108\,905\,006\,177,\\
& -8\,512\,851\,025\,774,\; 8\,417\,417\,286\,718,\; 6\,103\,465\,950\,138,\; 1\,804\,982\,474\,404\bigr)
\end{aligned}
\]
\[
\begin{aligned}
\mathbf B_{10,1}={}\bigl( & 663\,836\,874\,887\,466,\; 358\,287\,568\,541\,485,\; -139\,152\,030\,681\,834,\; -20\,581\,870\,504\,616,\\
& 9\,531\,455\,209\,083,\; -9\,220\,273\,862\,189,\; -12\,908\,207\,598\,116,\; 6\,259\,222\,925\,854,\\
& 6\,731\,768\,637\,941,\; -3\,218\,046\,555\,562,\; -5\,185\,892\,448\,235,\; -1\,752\,063\,730\,913\bigr)
\end{aligned}
\]
\[
\mathbf G_{10}=(-542\,121\,689\,317\,240,\quad -32\,132\,930\,263\,982),\qquad
\mathbf E_{10}=(0,\quad 0).
\]
\end{samepage}
\normalsize
\par\penalty0\smallskip

\begin{samepage}
\footnotesize
\setlength{\abovedisplayskip}{5pt}
\setlength{\belowdisplayskip}{5pt}
\noindent $j=11$, $I_j=[5,\frac{11}{2}]$, $N_j=11$.
\[
\begin{aligned}
\mathbf B_{11,0}={}\bigl( & 25\,697\,939\,325\,139\,504,\; 2\,460\,868\,167\,415\,433,\; -1\,907\,861\,206\,211\,602,\; -129\,824\,359\,314\,492,\\
& 593\,174\,238\,640\,382,\; 296\,326\,429\,522\,767,\; -16\,061\,467\,543\,140,\; -68\,755\,031\,554\,619,\\
& -11\,875\,944\,956\,253,\; 25\,816\,978\,060\,205,\; 22\,279\,619\,106\,589,\; 8\,106\,635\,521\,248\bigr)
\end{aligned}
\]
\[
\begin{aligned}
\mathbf B_{11,1}={}\bigl( & -233\,267\,204\,259\,728,\; 119\,162\,985\,189\,446,\; -10\,787\,601\,517\,903,\; -13\,781\,082\,029\,282,\\
& 6\,071\,538\,568\,768,\; -2\,275\,261\,666\,861,\; -11\,903\,905\,118\,580,\; -6\,066\,660\,100\,714,\\
& 5\,484\,134\,587\,194,\; 11\,308\,038\,724\,095,\; 8\,801\,105\,774\,883,\; 4\,005\,446\,842\,765\bigr)
\end{aligned}
\]
\[
\mathbf G_{11}=(-588\,624\,008\,734\,749,\quad -19\,514\,570\,439\,558),\qquad
\mathbf E_{11}=(0,\quad 0).
\]
\end{samepage}
\normalsize
\par\penalty0\smallskip

\begin{samepage}
\footnotesize
\setlength{\abovedisplayskip}{5pt}
\setlength{\belowdisplayskip}{5pt}
\noindent $j=12$, $I_j=[\frac{27}{5},\frac{13}{2}]$, $N_j=11$.
\[
\begin{aligned}
\mathbf B_{12,0}={}\bigl( & 25\,267\,995\,938\,725\,460,\; 2\,791\,102\,243\,607\,135,\; -2\,052\,316\,426\,661\,384,\; -106\,920\,723\,140\,700,\\
& 603\,169\,576\,171\,931,\; 278\,688\,135\,527\,074,\; -11\,411\,230\,269\,687,\; -55\,976\,183\,357\,951,\\
& -13\,686\,716\,449\,575,\; 11\,399\,881\,169\,827,\; 9\,796\,131\,649\,101,\; 2\,797\,772\,189\,620\bigr)
\end{aligned}
\]
\[
\begin{aligned}
\mathbf B_{12,1}={}\bigl( & -1\,571\,320\,597\,117\,049,\; -78\,060\,730\,314\,835,\; -13\,168\,821\,015\,957,\; 48\,632\,487\,327\,079,\\
& -45\,441\,253\,477\,856,\; -59\,884\,785\,150\,432,\; -8\,497\,642\,510\,811,\; 14\,985\,352\,782\,709,\\
& 6\,187\,698\,033\,991,\; -5\,106\,547\,103\,247,\; -5\,566\,084\,786\,089,\; -1\,865\,529\,929\,909\bigr)
\end{aligned}
\]
\[
\mathbf G_{12}=(-659\,734\,670\,683\,471,\quad -37\,587\,675\,437\,251),\qquad
\mathbf E_{12}=(0,\quad 0).
\]
\end{samepage}
\normalsize
\par\penalty0\smallskip

\begin{samepage}
\footnotesize
\setlength{\abovedisplayskip}{5pt}
\setlength{\belowdisplayskip}{5pt}
\noindent $j=13$, $I_j=[\frac{32}{5},\frac{15}{2}]$, $N_j=11$.
\[
\begin{aligned}
\mathbf B_{13,0}={}\bigl( & 30\,086\,308\,538\,164\,296,\; 4\,697\,884\,607\,504\,267,\; -2\,517\,664\,298\,360\,390,\; -309\,544\,929\,231\,317,\\
& 884\,795\,141\,548\,105,\; 491\,695\,327\,104\,811,\; -15\,767\,851\,904\,839,\; -110\,409\,338\,623\,988,\\
& -12\,675\,337\,162\,481,\; 65\,498\,921\,071\,088,\; 60\,029\,743\,021\,923,\; 29\,417\,919\,432\,189\bigr)
\end{aligned}
\]
\[
\begin{aligned}
\mathbf B_{13,1}={}\bigl( & 1\,896\,574\,050\,129\,116,\; 849\,832\,147\,297\,293,\; -241\,552\,798\,540\,493,\; -79\,723\,881\,986\,277,\\
& 123\,065\,907\,917\,517,\; 76\,457\,830\,377\,002,\; -16\,524\,638\,587\,453,\; -23\,883\,747\,590\,441,\\
& 12\,321\,031\,248\,981,\; 45\,324\,482\,569\,540,\; 41\,034\,327\,450\,599,\; 23\,643\,196\,304\,737\bigr)
\end{aligned}
\]
\[
\mathbf G_{13}=(-823\,210\,101\,690\,386,\quad -85\,255\,127\,499\,723),\qquad
\mathbf E_{13}=(0,\quad 0).
\]
\end{samepage}
\normalsize
\par\penalty0\smallskip

\begin{samepage}
\footnotesize
\setlength{\abovedisplayskip}{5pt}
\setlength{\belowdisplayskip}{5pt}
\noindent $j=14$, $I_j=[\frac{37}{5},9]$, $N_j=11$.
\[
\begin{aligned}
\mathbf B_{14,0}={}\bigl( & 31\,170\,360\,609\,306\,396,\; 5\,532\,913\,610\,914\,782,\; -2\,625\,292\,786\,204\,126,\; -388\,519\,148\,313\,454,\\
& 1\,040\,387\,882\,886\,959,\; 667\,814\,173\,834\,645,\; 45\,050\,285\,655\,972,\; -132\,868\,877\,361\,956,\\
& -47\,498\,143\,113\,654,\; 66\,813\,215\,037\,796,\; 61\,529\,048\,673\,254,\; 54\,071\,173\,485\,244\bigr)
\end{aligned}
\]
\[
\begin{aligned}
\mathbf B_{14,1}={}\bigl( & 2\,500\,084\,699\,109\,088,\; 1\,051\,403\,203\,396\,738,\; -205\,011\,947\,764\,261,\; -106\,684\,356\,520\,441,\\
& 208\,952\,645\,159\,678,\; 204\,005\,369\,476\,402,\; 41\,918\,403\,758\,049,\; -36\,398\,213\,875\,217,\\
& -19\,281\,795\,237\,158,\; 44\,239\,713\,416\,139,\; 40\,193\,969\,295\,018,\; 47\,585\,881\,613\,859\bigr)
\end{aligned}
\]
\[
\mathbf G_{14}=(-938\,195\,656\,364\,055,\quad -106\,283\,021\,446\,540),\qquad
\mathbf E_{14}=(0,\quad 0).
\]
\end{samepage}
\normalsize
\par\penalty0\smallskip

\begin{samepage}
\footnotesize
\setlength{\abovedisplayskip}{5pt}
\setlength{\belowdisplayskip}{5pt}
\noindent $j=15$, $I_j=[\frac{89}{10},\frac{21}{2}]$, $N_j=9$.
\[
\begin{aligned}
\mathbf B_{15,0}={}\bigl( & 29\,860\,030\,802\,049\,720,\; 5\,772\,178\,250\,774\,631,\; -2\,897\,533\,367\,861\,917,\; -326\,030\,444\,049\,826,\\
& 931\,514\,412\,172\,014,\; 434\,888\,357\,655\,536,\; -104\,110\,789\,973\,341,\; -99\,537\,430\,724\,009,\\
& -20\,633\,644\,101\,026,\; 85\,921\,754\,564\,972\bigr)
\end{aligned}
\]
\[
\begin{aligned}
\mathbf B_{15,1}={}\bigl( & 1\,736\,079\,591\,229\,595,\; 774\,470\,070\,917\,232,\; -292\,991\,926\,351\,255,\; -32\,180\,697\,511\,298,\\
& 147\,178\,780\,747\,502,\; 62\,578\,730\,103\,754,\; -45\,263\,197\,101\,846,\; -16\,059\,975\,906\,373,\\
& -16\,718\,627\,613\,336,\; 64\,344\,261\,301\,752\bigr)
\end{aligned}
\]
\[
\mathbf G_{15}=(-1\,025\,270\,863\,284\,464,\quad -89\,026\,216\,023\,101),\qquad
\mathbf E_{15}=(0,\quad 0).
\]
\end{samepage}
\normalsize
\par\penalty0\smallskip

\begin{samepage}
\footnotesize
\setlength{\abovedisplayskip}{5pt}
\setlength{\belowdisplayskip}{5pt}
\noindent $j=16$, $I_j=[\frac{54}{5},13]$, $N_j=9$.
\[
\begin{aligned}
\mathbf B_{16,0}={}\bigl( & 120\,423\,482\,091\,707\,192,\; 26\,184\,554\,857\,755\,136,\; -10\,689\,574\,320\,380\,482,\; -2\,169\,982\,205\,706\,272,\\
& 3\,107\,946\,444\,186\,794,\; 1\,921\,702\,348\,779\,464,\; 84\,086\,166\,290\,651,\; -219\,943\,311\,556\,045,\\
& -33\,211\,948\,575\,614,\; 47\,335\,727\,004\,217\bigr)
\end{aligned}
\]
\[
\begin{aligned}
\mathbf B_{16,1}={}\bigl( & -9\,859\,710\,295\,033\,688,\; -1\,748\,861\,379\,149\,857,\; 776\,907\,128\,927\,267,\; 206\,340\,195\,033\,300,\\
& -246\,697\,890\,347\,827,\; -72\,480\,732\,746\,922,\; 190\,526\,956\,111\,355,\; 175\,007\,026\,984\,950,\\
& 23\,064\,463\,250\,545,\; -45\,731\,839\,308\,256\bigr)
\end{aligned}
\]
\[
\mathbf G_{16}=(-3\,992\,229\,794\,907\,459,\quad 7\,770\,205\,092\,541),\qquad
\mathbf E_{16}=(-243\,073\,071\,072\,496,\quad 28\,398\,997\,812\,925).
\]
\end{samepage}
\normalsize
\par\penalty0\smallskip

\begin{samepage}
\footnotesize
\setlength{\abovedisplayskip}{5pt}
\setlength{\belowdisplayskip}{5pt}
\noindent $j=17$, $I_j=[\frac{64}{5},15]$, $N_j=9$.
\[
\begin{aligned}
\mathbf B_{17,0}={}\bigl( & 104\,068\,319\,993\,649\,240,\; 24\,008\,939\,108\,635\,688,\; -8\,904\,209\,837\,630\,534,\; -2\,181\,329\,325\,308\,189,\\
& 2\,395\,933\,585\,955\,964,\; 1\,412\,411\,876\,511\,648,\; -170\,987\,752\,273\,108,\; -306\,390\,479\,976\,519,\\
& -20\,012\,541\,617\,307,\; 149\,659\,854\,688\,174\bigr)
\end{aligned}
\]
\[
\begin{aligned}
\mathbf B_{17,1}={}\bigl( & -12\,108\,514\,444\,964\,706,\; -2\,475\,780\,276\,357\,371,\; 1\,511\,520\,909\,340\,921,\; 78\,766\,991\,424\,931,\\
& -697\,676\,602\,143\,848,\; -456\,920\,263\,851\,166,\; -82\,384\,647\,718\,634,\; 61\,179\,638\,073\,456,\\
& 34\,563\,007\,047\,673,\; 63\,316\,640\,036\,743\bigr)
\end{aligned}
\]
\[
\mathbf G_{17}=(-4\,000\,000\,000\,000\,000,\quad 0),\qquad
\mathbf E_{17}=(-199\,077\,969\,971\,347,\quad 7\,138\,091\,358\,430).
\]
\end{samepage}
\normalsize
\par\penalty0\smallskip

\begin{samepage}
\footnotesize
\setlength{\abovedisplayskip}{5pt}
\setlength{\belowdisplayskip}{5pt}
\noindent $j=18$, $I_j=[\frac{74}{5},17]$, $N_j=9$.
\[
\begin{aligned}
\mathbf B_{18,0}={}\bigl( & 102\,159\,525\,973\,904\,224,\; 25\,314\,265\,149\,168\,144,\; -9\,851\,807\,381\,129\,676,\; -2\,069\,803\,055\,521\,264,\\
& 2\,655\,477\,784\,451\,309,\; 1\,376\,820\,942\,511\,354,\; -321\,037\,694\,689\,852,\; -289\,269\,091\,722\,315,\\
& 104\,140\,796\,559\,105,\; 199\,248\,916\,292\,729\bigr)
\end{aligned}
\]
\[
\begin{aligned}
\mathbf B_{18,1}={}\bigl( & -1\,354\,412\,418\,436\,295,\; 777\,868\,603\,510\,280,\; -508\,404\,681\,706\,166,\; 18\,688\,174\,637\,421,\\
& -155\,090\,122\,511\,417,\; -342\,437\,089\,060\,120,\; -248\,713\,604\,938\,061,\; 46\,346\,417\,031\,264,\\
& 155\,875\,992\,494\,585,\; 120\,707\,249\,144\,485\bigr)
\end{aligned}
\]
\[
\mathbf G_{18}=(-3\,999\,999\,999\,999\,976,\quad 0),\qquad
\mathbf E_{18}=(-147\,440\,377\,028\,222,\quad 22\,692\,972\,134\,757).
\]
\end{samepage}
\normalsize
\par\penalty0\smallskip

\begin{samepage}
\footnotesize
\setlength{\abovedisplayskip}{5pt}
\setlength{\belowdisplayskip}{5pt}
\noindent $j=19$, $I_j=[\frac{84}{5},\frac{39}{2}]$, $N_j=9$.
\[
\begin{aligned}
\mathbf B_{19,0}={}\bigl( & 92\,871\,740\,113\,613\,624,\; 23\,795\,746\,939\,702\,472,\; -8\,378\,381\,255\,374\,338,\; -2\,097\,676\,720\,777\,805,\\
& 2\,506\,933\,812\,429\,303,\; 1\,581\,023\,685\,407\,946,\; -193\,470\,940\,263\,773,\; -371\,938\,459\,793\,276,\\
& -28\,574\,349\,255\,691,\; 189\,902\,557\,774\,830\bigr)
\end{aligned}
\]
\[
\begin{aligned}
\mathbf B_{19,1}={}\bigl( & 1\,430\,382\,152\,979\,440,\; 1\,558\,095\,390\,431\,998,\; -107\,931\,152\,792\,350,\; -208\,865\,449\,207\,216,\\
& -5\,684\,368\,266\,164,\; 28\,270\,773\,119\,052,\; -136\,127\,709\,391\,537,\; -71\,125\,039\,006\,358,\\
& 19\,523\,005\,561\,192,\; 119\,794\,184\,133\,576\bigr)
\end{aligned}
\]
\[
\mathbf G_{19}=(-4\,000\,000\,000\,000\,000,\quad 0),\qquad
\mathbf E_{19}=(-127\,616\,962\,123\,000,\quad 20\,426\,985\,468\,678).
\]
\end{samepage}
\normalsize
\par\penalty0\smallskip

\begin{samepage}
\footnotesize
\setlength{\abovedisplayskip}{5pt}
\setlength{\belowdisplayskip}{5pt}
\noindent $j=20$, $I_j=[\frac{201}{50},\frac{17}{4}]$, $N_j=15$.
\[
\begin{aligned}
\mathbf B_{20,0}={}\bigl( & 4\,860\,254\,620\,684\,765,\; 3\,509\,474\,372\,118\,688,\; -1\,031\,383\,355\,255\,547,\; -216\,597\,120\,470\,127,\\
& 300\,306\,038\,736\,491,\; 171\,953\,048\,005\,878,\; -912\,916\,865\,971,\; -36\,436\,547\,930\,836,\\
& -11\,220\,921\,448\,266,\; 7\,829\,310\,506\,726,\; 6\,531\,161\,822\,212,\; -202\,838\,064\,504,\\
& -2\,435\,187\,057\,328,\; -298\,126\,992\,464,\; 1\,116\,124\,847\,284,\; 693\,270\,891\,642\bigr)
\end{aligned}
\]
\[
\begin{aligned}
\mathbf B_{20,1}={}\bigl( & -713\,405\,384\,469\,004,\; 44\,460\,311\,224\,227,\; 58\,966\,176\,024\,799,\; -19\,359\,454\,430\,388,\\
& 243\,045\,245\,915,\; 8\,160\,014\,324\,494,\; 2\,141\,596\,079\,627,\; -1\,364\,916\,262\,690,\\
& -925\,721\,288\,199,\; 215\,536\,805\,831,\; 382\,915\,335\,164,\; 51\,733\,404\,608,\\
& -128\,602\,645\,487,\; -35\,845\,648\,798,\; 52\,842\,666\,002,\; 39\,674\,930\,174\bigr)
\end{aligned}
\]
\[
\mathbf G_{20}=(-279\,255\,413\,970\,420,\quad 9\,273\,754\,016\,918),\qquad
\mathbf E_{20}=(366\,425\,297\,587\,241,\quad -203\,062\,634\,486).
\]
\end{samepage}
\normalsize
\par\penalty0\smallskip

\begin{samepage}
\footnotesize
\setlength{\abovedisplayskip}{5pt}
\setlength{\belowdisplayskip}{5pt}
\noindent $j=21$, $I_j=[\frac{81}{20},\frac{17}{4}]$, $N_j=15$.
\[
\begin{aligned}
\mathbf B_{21,0}={}\bigl( & 4\,724\,681\,345\,135\,715,\; 3\,517\,446\,096\,460\,463,\; -1\,022\,204\,406\,495\,594,\; -219\,308\,306\,235\,555,\\
& 300\,047\,162\,503\,649,\; 172\,762\,877\,010\,806,\; -734\,544\,843\,324,\; -36\,594\,898\,958\,980,\\
& -11\,299\,910\,081\,030,\; 7\,870\,604\,686\,635,\; 6\,574\,004\,182\,692,\; -202\,745\,408\,770,\\
& -2\,451\,594\,599\,244,\; -300\,303\,773\,525,\; 1\,123\,878\,428\,234,\; 698\,099\,233\,759\bigr)
\end{aligned}
\]
\[
\begin{aligned}
\mathbf B_{21,1}={}\bigl( & -569\,075\,860\,159\,396,\; 44\,160\,924\,092\,537,\; 47\,796\,650\,731\,920,\; -17\,239\,896\,332\,091,\\
& 1\,175\,593\,230\,625,\; 7\,737\,609\,755\,220,\; 1\,922\,872\,969\,334,\; -1\,309\,316\,427\,839,\\
& -863\,901\,557\,563,\; 207\,764\,852\,113,\; 360\,652\,704\,649,\; 48\,435\,222\,094,\\
& -121\,215\,755\,032,\; -34\,075\,628\,488,\; 49\,554\,618\,730,\; 37\,402\,465\,986\bigr)
\end{aligned}
\]
\[
\mathbf G_{21}=(-277\,577\,527\,549\,590,\quad 7\,151\,707\,212\,092),\qquad
\mathbf E_{21}=(367\,082\,577\,002\,698,\quad -664\,660\,607\,657).
\]
\end{samepage}
\normalsize
\par\penalty0\smallskip

\begin{samepage}
\footnotesize
\setlength{\abovedisplayskip}{5pt}
\setlength{\belowdisplayskip}{5pt}
\noindent $j=22$, $I_j=[\frac{21}{5},\frac{89}{20}]$, $N_j=15$.
\[
\begin{aligned}
\mathbf B_{22,0}={}\bigl( & 4\,895\,063\,265\,871\,995,\; 3\,434\,007\,436\,155\,180,\; -946\,435\,127\,444\,308,\; -242\,030\,363\,261\,259,\\
& 296\,954\,237\,757\,154,\; 186\,168\,417\,107\,524,\; 5\,983\,961\,779\,026,\; -37\,495\,508\,280\,519,\\
& -13\,573\,664\,562\,111,\; 7\,403\,059\,038\,300,\; 7\,190\,867\,345\,604,\; 206\,513\,487\,439,\\
& -2\,581\,331\,909\,599,\; -492\,606\,923\,716,\; 1\,106\,838\,569\,482,\; 747\,916\,690\,900\bigr)
\end{aligned}
\]
\[
\begin{aligned}
\mathbf B_{22,1}={}\bigl( & -936\,049\,063\,393\,733,\; 51\,128\,421\,286\,063,\; 82\,811\,393\,442\,207,\; -22\,762\,456\,345\,969,\\
& -9\,575\,838\,133\,323,\; 7\,311\,857\,948\,235,\; 5\,190\,816\,751\,348,\; -64\,653\,572\,238,\\
& -1\,483\,956\,078\,818,\; -457\,490\,512\,923,\; 344\,728\,809\,864,\; 300\,904\,363\,793,\\
& -53\,742\,029\,167,\; -135\,690\,333\,021,\; -30\,638\,662\,184,\; 26\,981\,265\,647\bigr)
\end{aligned}
\]
\[
\mathbf G_{22}=(-271\,292\,404\,116\,455,\quad 14\,598\,055\,172\,568),\qquad
\mathbf E_{22}=(343\,691\,100\,407\,056,\quad 2\,727\,036\,787\,781).
\]
\end{samepage}
\normalsize
\par\penalty0\smallskip

\begin{samepage}
\footnotesize
\setlength{\abovedisplayskip}{5pt}
\setlength{\belowdisplayskip}{5pt}
\noindent $j=23$, $I_j=[\frac{21}{5},\frac{9}{2}]$, $N_j=15$.
\[
\begin{aligned}
\mathbf B_{23,0}={}\bigl( & 5\,060\,439\,269\,772\,109,\; 3\,357\,663\,479\,749\,356,\; -942\,941\,749\,796\,635,\; -236\,032\,571\,772\,692,\\
& 298\,257\,870\,198\,364,\; 185\,703\,990\,276\,974,\; 5\,404\,178\,001\,568,\; -37\,631\,473\,945\,072,\\
& -13\,470\,586\,012\,421,\; 7\,492\,845\,515\,210,\; 7\,180\,831\,299\,687,\; 150\,154\,248\,401,\\
& -2\,591\,400\,121\,843,\; -463\,162\,470\,214,\; 1\,128\,149\,769\,679,\; 750\,625\,720\,136\bigr)
\end{aligned}
\]
\[
\begin{aligned}
\mathbf B_{23,1}={}\bigl( & -1\,219\,886\,737\,066\,740,\; 18\,545\,465\,181\,485,\; 97\,273\,459\,982\,067,\; -19\,028\,547\,998\,250,\\
& -10\,545\,251\,182\,750,\; 5\,644\,573\,907\,055,\; 4\,241\,045\,448\,474,\; -110\,108\,508\,849,\\
& -1\,196\,884\,220\,973,\; -305\,536\,995\,065,\; 295\,465\,381\,970,\; 207\,459\,688\,043,\\
& -60\,855\,622\,195,\; -90\,184\,339\,058,\; -4\,617\,489\,955,\; 25\,170\,082\,944\bigr)
\end{aligned}
\]
\[
\mathbf G_{23}=(-271\,581\,166\,090\,707,\quad 18\,363\,313\,613\,461),\qquad
\mathbf E_{23}=(336\,087\,300\,661\,143,\quad 3\,163\,099\,199\,270).
\]
\end{samepage}
\normalsize
\par\penalty0\smallskip

\begin{samepage}
\footnotesize
\setlength{\abovedisplayskip}{5pt}
\setlength{\belowdisplayskip}{5pt}
\noindent $j=24$, $I_j=[\frac{89}{20},\frac{49}{10}]$, $N_j=15$.
\[
\begin{aligned}
\mathbf B_{24,0}={}\bigl( & 5\,130\,685\,965\,203\,766,\; 2\,971\,043\,061\,442\,524,\; -822\,059\,365\,499\,906,\; -229\,964\,393\,624\,212,\\
& 280\,445\,839\,811\,330,\; 183\,822\,783\,530\,365,\; 8\,164\,951\,478\,024,\; -36\,865\,265\,887\,389,\\
& -13\,959\,543\,266\,593,\; 7\,227\,223\,046\,974,\; 7\,260\,963\,331\,357,\; 236\,787\,065\,221,\\
& -2\,602\,995\,213\,562,\; -489\,955\,653\,527,\; 1\,126\,825\,664\,284,\; 757\,987\,876\,654\bigr)
\end{aligned}
\]
\[
\begin{aligned}
\mathbf B_{24,1}={}\bigl( & -1\,872\,709\,627\,489\,539,\; -49\,948\,660\,554\,502,\; 137\,861\,269\,809\,993,\; -11\,348\,706\,625\,910,\\
& -22\,262\,648\,167\,562,\; -1\,323\,056\,018\,928,\; 3\,778\,925\,429\,647,\; 1\,078\,299\,163\,748,\\
& -641\,065\,949\,440,\; -440\,375\,587\,316,\; 64\,989\,964\,468,\; 155\,456\,932\,055,\\
& 9\,723\,830\,328,\; -55\,381\,155\,754,\; -24\,943\,255\,573,\; 3\,451\,731\,015\bigr)
\end{aligned}
\]
\[
\mathbf G_{24}=(-254\,501\,140\,469\,729,\quad 30\,339\,797\,970\,915),\qquad
\mathbf E_{24}=(293\,185\,093\,326\,804,\quad 4\,882\,128\,719\,314).
\]
\end{samepage}
\normalsize
\par\penalty0\smallskip

\begin{samepage}
\footnotesize
\setlength{\abovedisplayskip}{5pt}
\setlength{\belowdisplayskip}{5pt}
\noindent $j=25$, $I_j=[\frac{97}{20},\frac{11}{2}]$, $N_j=15$.
\[
\begin{aligned}
\mathbf B_{25,0}={}\bigl( & 5\,290\,996\,271\,473\,685,\; 2\,560\,749\,608\,025\,492,\; -713\,095\,035\,559\,578,\; -215\,165\,272\,734\,915,\\
& 260\,159\,173\,165\,442,\; 178\,554\,479\,468\,887,\; 10\,651\,871\,464\,416,\; -35\,436\,332\,874\,431,\\
& -14\,229\,280\,775\,599,\; 6\,826\,726\,327\,791,\; 7\,227\,488\,898\,073,\; 325\,208\,799\,936,\\
& -2\,577\,513\,297\,234,\; -507\,755\,180\,965,\; 1\,113\,573\,372\,741,\; 755\,094\,238\,662\bigr)
\end{aligned}
\]
\[
\begin{aligned}
\mathbf B_{25,1}={}\bigl( & -2\,667\,364\,138\,401\,118,\; -170\,231\,604\,365\,306,\; 185\,319\,976\,395\,995,\; 4\,390\,930\,929\,631,\\
& -40\,287\,183\,319\,011,\; -15\,227\,453\,668\,510,\; 1\,866\,169\,969\,927,\; 3\,354\,208\,115\,987,\\
& 675\,049\,803\,600,\; -678\,926\,309\,032,\; -461\,725\,606\,031,\; 24\,344\,641\,655,\\
& 169\,322\,682\,504,\; 27\,309\,947\,638,\; -69\,043\,540\,281,\; -45\,598\,634\,990\bigr)
\end{aligned}
\]
\[
\mathbf G_{25}=(-237\,857\,436\,433\,386,\quad 47\,125\,326\,222\,266),\qquad
\mathbf E_{25}=(244\,393\,696\,918\,888,\quad 6\,887\,317\,044\,188).
\]
\end{samepage}
\normalsize
\par\penalty0\smallskip

\begin{samepage}
\footnotesize
\setlength{\abovedisplayskip}{5pt}
\setlength{\belowdisplayskip}{5pt}
\noindent $j=26$, $I_j=[\frac{109}{20},\frac{13}{2}]$, $N_j=15$.
\[
\begin{aligned}
\mathbf B_{26,0}={}\bigl( & 6\,157\,871\,434\,992\,430,\; 2\,249\,690\,166\,867\,241,\; -677\,824\,437\,380\,257,\; -198\,720\,001\,457\,198,\\
& 251\,589\,034\,915\,219,\; 174\,945\,331\,060\,863,\; 11\,284\,828\,265\,298,\; -34\,865\,329\,627\,295,\\
& -14\,249\,334\,265\,922,\; 6\,777\,878\,535\,887,\; 7\,241\,945\,332\,585,\; 319\,025\,722\,372,\\
& -2\,591\,338\,885\,263,\; -497\,484\,777\,131,\; 1\,130\,285\,282\,018,\; 760\,593\,394\,693\bigr)
\end{aligned}
\]
\[
\begin{aligned}
\mathbf B_{26,1}={}\bigl( & -4\,219\,702\,192\,804\,876,\; -466\,394\,271\,355\,822,\; 300\,045\,382\,023\,411,\; 35\,478\,482\,021\,993,\\
& -82\,690\,285\,312\,408,\; -45\,579\,129\,792\,902,\; -812\,067\,298\,597,\; 9\,133\,447\,721\,955,\\
& 3\,317\,343\,413\,732,\; -1\,688\,927\,845\,657,\; -1\,725\,031\,045\,556,\; -99\,424\,981\,304,\\
& 605\,971\,680\,719,\; 142\,276\,019\,426,\; -246\,478\,035\,017,\; -177\,580\,629\,605\bigr)
\end{aligned}
\]
\[
\mathbf G_{26}=(-235\,957\,289\,131\,428,\quad 83\,824\,752\,295\,395),\qquad
\mathbf E_{26}=(188\,662\,530\,504\,966,\quad 9\,726\,563\,249\,672).
\]
\end{samepage}
\normalsize
\par\penalty0\smallskip

\begin{samepage}
\footnotesize
\setlength{\abovedisplayskip}{5pt}
\setlength{\belowdisplayskip}{5pt}
\noindent $j=27$, $I_j=[\frac{32}{5},\frac{15}{2}]$, $N_j=11$.
\[
\begin{aligned}
\mathbf B_{27,0}={}\bigl( & 8\,773\,445\,318\,123\,258,\; 2\,342\,748\,743\,776\,263,\; -802\,626\,795\,255\,034,\; -211\,994\,405\,865\,063,\\
& 292\,343\,696\,557\,315,\; 207\,753\,913\,363\,688,\; 21\,590\,791\,379\,844,\; -37\,002\,870\,273\,291,\\
& -16\,483\,709\,127\,584,\; 6\,688\,899\,963\,091,\; 9\,124\,516\,161\,673,\; 3\,414\,005\,781\,841\bigr)
\end{aligned}
\]
\[
\begin{aligned}
\mathbf B_{27,1}={}\bigl( & -4\,170\,382\,713\,450\,770,\; -601\,784\,488\,331\,548,\; 309\,449\,913\,593\,582,\; 51\,164\,399\,999\,208,\\
& -96\,297\,827\,677\,257,\; -60\,433\,389\,920\,746,\; -4\,555\,514\,662\,241,\; 10\,998\,568\,950\,988,\\
& 4\,493\,310\,041\,735,\; -2\,012\,578\,309\,035,\; -2\,572\,714\,474\,838,\; -943\,886\,621\,232\bigr)
\end{aligned}
\]
\[
\mathbf G_{27}=(-288\,588\,164\,028\,711,\quad 87\,897\,626\,377\,879),\qquad
\mathbf E_{27}=(136\,615\,742\,628\,300,\quad 8\,626\,295\,753\,503).
\]
\end{samepage}
\normalsize
\par\penalty0\smallskip

\begin{samepage}
\footnotesize
\setlength{\abovedisplayskip}{5pt}
\setlength{\belowdisplayskip}{5pt}
\noindent $j=28$, $I_j=[\frac{37}{5},\frac{93}{10}]$, $N_j=11$.
\[
\begin{aligned}
\mathbf B_{28,0}={}\bigl( & 53\,244\,940\,789\,910\,616,\; 10\,477\,569\,929\,210\,052,\; -4\,108\,969\,669\,903\,220,\; -964\,675\,657\,349\,664,\\
& 1\,470\,841\,342\,230\,797,\; 1\,055\,964\,288\,675\,652,\; 87\,132\,840\,154\,234,\; -216\,588\,047\,054\,789,\\
& -67\,196\,322\,185\,460,\; 98\,912\,868\,146\,038,\; 101\,462\,795\,540\,758,\; 57\,199\,529\,656\,218\bigr)
\end{aligned}
\]
\[
\begin{aligned}
\mathbf B_{28,1}={}\bigl( & 6\,514\,983\,813\,432\,037,\; 2\,340\,887\,920\,819\,844,\; -593\,473\,531\,433\,028,\; -242\,778\,653\,380\,990,\\
& 281\,234\,898\,989\,803,\; 238\,315\,297\,826\,084,\; -7\,158\,261\,215\,357,\; -78\,262\,284\,856\,021,\\
& 47\,998\,342\,064,\; 77\,092\,550\,102\,510,\; 67\,049\,296\,311\,739,\; 43\,371\,422\,928\,759\bigr)
\end{aligned}
\]
\[
\mathbf G_{28}=(-1\,431\,642\,595\,692\,508,\quad -249\,458\,865\,946\,376),\qquad
\mathbf E_{28}=(-63\,334\,303\,029\,205,\quad -2\,056\,920\,616\,513).
\]
\end{samepage}
\normalsize
\par\penalty0\smallskip

\begin{samepage}
\footnotesize
\setlength{\abovedisplayskip}{5pt}
\setlength{\belowdisplayskip}{5pt}
\noindent $j=29$, $I_j=[9,11]$, $N_j=11$.
\[
\begin{aligned}
\mathbf B_{29,0}={}\bigl( & 136\,915\,288\,533\,818\,168,\; 27\,564\,563\,721\,684\,760,\; -11\,007\,552\,888\,738\,462,\; -2\,490\,208\,161\,632\,625,\\
& 3\,818\,771\,072\,503\,582,\; 2\,658\,442\,790\,222\,726,\; 292\,383\,720\,313\,469,\; -461\,175\,732\,343\,320,\\
& -215\,430\,108\,566\,864,\; 77\,834\,035\,767\,509,\; 114\,400\,113\,934\,159,\; 44\,472\,208\,190\,722\bigr)
\end{aligned}
\]
\[
\begin{aligned}
\mathbf B_{29,1}={}\bigl( & -9\,749\,747\,086\,988\,028,\; -782\,474\,508\,860\,664,\; 673\,412\,100\,859\,312,\; 50\,524\,327\,347\,845,\\
& -176\,582\,016\,792\,970,\; -79\,261\,947\,211\,366,\; 11\,407\,717\,352\,225,\; 21\,099\,100\,799\,879,\\
& 2\,507\,658\,597\,015,\; -6\,200\,770\,632\,917,\; -3\,985\,970\,934\,580,\; -697\,317\,513\,335\bigr)
\end{aligned}
\]
\[
\mathbf G_{29}=(-3\,999\,999\,999\,999\,926,\quad 0),\qquad
\mathbf E_{29}=(-333\,515\,814\,144\,031,\quad 58\,868\,288\,538\,577).
\]
\end{samepage}
\normalsize
\par\penalty0\smallskip

\bigskip

\noindent
{\bf Acknowledgements.}

 The research of S. Luo is partially supported by the National Natural Science Foundation of China (NSFC) under Grant Nos. 12261045 and 12001253, and by the Jiangxi Jieqing Fund under Grant No. 20242BAB23001.
 The research of J. Wei is partially supported by the General Research Fund (GRF) of Hong Kong "New frontiers in singularity formation of nonlinear partial differential equations".
ChatGPT was used to assist with mathematical exposition, review of
proof arguments and citations, and preparation of the computational
verification materials (including all the appendices).

\renewcommand{\bibliofont}{\footnotesize\setlength{\baselineskip}{10pt}}

\end{document}